\declaretheorem{theorem}
\declaretheorem{corollary}
\declaretheorem{lemma}
\declaretheorem{proposition}
\declaretheorem{conjecture}
\declaretheorem[style=definition]{remark}
\crefname{assumption}{Assumption}{Assumptions}
\newcommand{\floor}[1]{\ensuremath{\left\lfloor #1 \right\rfloor}}
\newcommand{\abs}[1]{\ensuremath{\left\lvert #1 \right\rvert}}
\newcommand{\by}{\times}
\newcommand{\norm}[1]{\ensuremath{\left\lVert #1 \right\rVert}}
\newcommand{\ip}[1]{\ensuremath{\left\langle #1 \right\rangle}}
\newcommand{\set}[1]{\left\{#1\right\}}
\def\B{{\mathbb{B}}}
\def\O{{\mathbb{O}}}
\def\R{{\mathbb{R}}}
\def\S{{\mathbb{S}}}
\def\T{{\mathbb{T}}}
\def\X{{\mathbb{X}}}
\def\Z{{\mathbb{Z}}}
\def\bS{{\mathbf{S}}}
\def\cB{{\cal B}}
\def\cC{{\cal C}}
\def\cL{{\cal L}}
\def\cR{{\cal R}}
\def\cT{{\cal T}}
\def\cU{{\cal U}}
\def\cV{{\cal V}}
\DeclareMathOperator*{\argmax}{arg\,max}
\DeclareMathOperator{\rank}{rank}
\DeclareMathOperator{\Diag}{Diag}
\DeclareMathOperator{\diag}{diag}
\DeclareMathOperator{\tr}{tr}
\DeclareMathOperator{\inter}{int}
\DeclareMathOperator{\conv}{conv}
\newcommand{\framedheader}[2]{
  \framebox[\textwidth]{
    \vbox{
      \vspace{2mm}
      \hbox to \textwidth {\hspace{1em}\today \hfill #1\hspace{1em}}
      \vspace{4mm}
      \hbox to \textwidth {\hfill \Large{#2} \hfill}
      \vspace{2mm}
    }
  }
  \vspace*{4mm}
}
\let\O\undefined
\DeclareMathOperator{\O}{O}
\DeclareMathOperator{\SO}{SO}
\newcommand{\Bop}{\B_\textup{op}}
\newcommand{\sut}{\cT}
\newcommand{\change}[1]{#1}
\newcommand{\changetwo}[1]{#1}
\newcommand{\PP}{\textup{PP}}
\DeclareMathOperator{\spin}{Spin}
\DeclareMathOperator{\dist}{dist}
\DeclareMathOperator{\str}{str}
\DeclareMathOperator{\sign}{sign}
\pgfplotsset{compat=1.18}
\newcommand{\ije}[1]{}
\begin{document}
\title{Hidden convexity, optimization, and algorithms on rotation matrices}
\author[1]{Akshay Ramachandran}
\author[2]{Kevin Shu}
\author[1,3]{Alex L. Wang}
\affil[1]{Centrum Wiskunde \& Informatica, Amsterdam, The Netherlands}
\affil[2]{George Institute of Technology, Atlanta, GA}
\affil[3]{Purdue University, West Lafayette, IN}
\maketitle

\begin{abstract}

This paper studies 
hidden convexity properties associated with
constrained optimization problems over the set of rotation matrices $\SO(n)$.
Such problems are nonconvex due to the constraint $X\in\SO(n)$. Nonetheless, we show that certain linear images of $\SO(n)$ are convex, opening up the possibility for convex optimization algorithms with provable guarantees for these problems.
Our main technical contributions show that any two-dimensional image of $\SO(n)$ is convex and that the projection of $\SO(n)$ onto its strict upper triangular entries is convex. These results allow us to construct exact convex reformulations for constrained optimization problems over $\SO(n)$ with a single constraint or with constraints defined by low-rank matrices.
Both of these results are \change{maximal} in a formal sense. \end{abstract}

\section{Introduction}
This paper studies a general class of optimization problems over rotations and orthogonal bases.
This class of problems covers applications such as the point registration problem in computer graphics \cite{ConvexPointRegistration, FunctionalMatching}, Wahba's problem of satellite attitude determination \cite{wahba1965least}, spacecraft orientation \cite{lee2011spacecraft}, and obstacle avoidance in robotics \cite{chen2020active}.
The main goal of this paper is show that in certain cases of interest, we can produce natural \emph{convex relaxations} that exactly recover the optimal solutions for such problems.

Recall, $\O(n)$ is the set of \change{orthogonal matrices} in $\R^n$, or more explicitly,
\[
    \O(n) \coloneqq \{X \in \R^{n\times n} : X^{\intercal}X = I\}.
\]
On the other hand, $\SO(n)$ is the set of (orientation-preserving) rotations on $\R^n$, defined as
\[
    \SO(n) \coloneqq \set{X \in \R^{n\times n} :\, \begin{array}{l}
        X^\intercal X = I\\
        \det(X) = 1
    \end{array}}.
\]
\change{These sets are groups under matrix multiplication, refered to as the orthogonal and special orthogonal groups, respectively.}

We consider optimization problems of the form
\begin{align}
    \label{eq:general_problem_format}
    \sup_{X\in\SO(n)}\set{\ip{A,X}:\, \cB(X) \in \cC},
\end{align}
and their $\O(n)$ counterparts.
Here, the objective is a \textit{linear} function defined by $A\in\R^{n\by n}$, and the constraint is defined by a \textit{linear} operator $\cB:\R^{n\by n}\to\R^m$ and some \textit{convex} set $\cC\subseteq \R^m$.
The notation $\ip{A,B}$ denotes the trace inner product $\tr(A^\intercal B)$.
We will also study feasibility variants of \eqref{eq:general_problem_format} where the goal is to identify an $X\in\SO(n)$ or $X\in\O(n)$ satisfying $\cB(X)\in\cC$, or to declare that no such $X$ exists.
We give additional motivation for these problems in \cref{subsec:motivation}, where we discuss constrained versions of Wahba's problem and point registration~\cite{ConvexPointRegistration,wahba1965least}.

Problems of the form \eqref{eq:general_problem_format} are ostensibly \emph{nonconvex} due to the constraint $X\in\SO(n)$ or $X\in\O(n)$.
Nevertheless, we will show that certain families of such problems admit exact convex reformulations.
To achieve this, our main technical contributions show that \emph{the images of $\SO(n)$ or $\O(n)$ under certain linear maps are convex}.
Such results---showing that certain transformations of nonconvex sets are convex---are often referred to as hidden convexity results and enable the application of convex optimization algorithms to nonconvex problems~\cite{xia2020survey,polik2007survey}.

To see how such a result might be useful in solving problems of the form \eqref{eq:general_problem_format}, suppose that $\cL:\R^{n\by n}\to\R^{1+m}$ is the linear map
\[
    \cL(X) \coloneqq \begin{pmatrix}
    \ip{A,X}\\
    \cB(X)
    \end{pmatrix}.
\]
If the image of $\SO(n)$ under $\cL$ is convex, then we would have that $\cL(\SO(n)) = \conv(\cL(\SO(n))) = \cL(\conv(\SO(n)))$.
Here, $\conv(\cdot)$ represents the convex hull of the argument.
In this case, it would then follow that
\begin{align*}
    \sup_{X\in\SO(n)}\set{\ip{A,X}:\, \cB(X)\in\cC} = \sup_{X\in\conv(\SO(n))}\set{\ip{A,X}:\,\cB(X)\in\cC}.
\end{align*}
In other words, convexity of the image $\cL(\SO(n))$ implies that the convex relaxation of \eqref{eq:general_problem_format} that simply replaces $\SO(n)$ with $\conv(\SO(n))$ is exact. This exactness is in terms of objective value, however we will also see how to numerically recover an actual optimizer in $\SO(n)$ or $\O(n)$ in the settings we consider.
Note that the convexity of $\cB(\SO(n))$ alone does not imply exactness (see for example \cref{fig:b_convex_not_sufficient}).
Similar results can be derived for $\O(n)$ and/or feasibility variants of \eqref{eq:general_problem_format} under corresponding convexity results.

\begin{figure}
    \centering
    \def\svgscale{0.5}
\begingroup%
\makeatletter%
\providecommand\color[2][]{%
  \errmessage{(Inkscape) Color is used for the text in Inkscape, but the package 'color.sty' is not loaded}%
  \renewcommand\color[2][]{}%
}%
\providecommand\transparent[1]{%
  \errmessage{(Inkscape) Transparency is used (non-zero) for the text in Inkscape, but the package 'transparent.sty' is not loaded}%
  \renewcommand\transparent[1]{}%
}%
\providecommand\rotatebox[2]{#2}%
\newcommand*\fsize{\dimexpr\f@size pt\relax}%
\newcommand*\lineheight[1]{\fontsize{\fsize}{#1\fsize}\selectfont}%
\ifx\svgwidth\undefined%
  \setlength{\unitlength}{272.00033353bp}%
  \ifx\svgscale\undefined%
    \relax%
  \else%
    \setlength{\unitlength}{\unitlength * \real{\svgscale}}%
  \fi%
\else%
  \setlength{\unitlength}{\svgwidth}%
\fi%
\global\let\svgwidth\undefined%
\global\let\svgscale\undefined%
\makeatother%
\begin{picture}(1,0.72287829)%
  \lineheight{1}%
  \setlength\tabcolsep{0pt}%
  \put(0,0){\includegraphics[width=\unitlength,page=1]{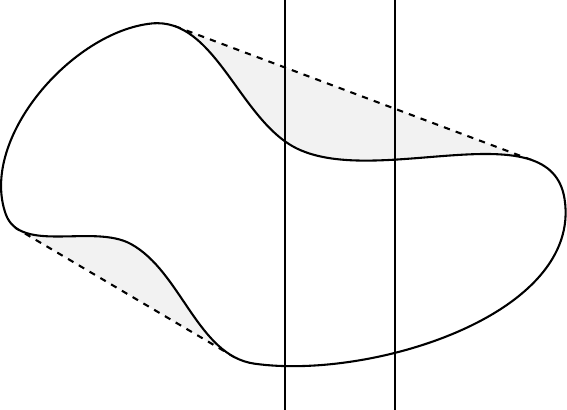}}%
  \put(0.09705983,0.39548251){\color[rgb]{0,0,0}\makebox(0,0)[lt]{\lineheight{1.25}\smash{\begin{tabular}[t]{l}$S$\end{tabular}}}}%
  \put(0.79386236,0.52006677){\color[rgb]{0,0,0}\makebox(0,0)[lt]{\lineheight{1.25}\smash{\begin{tabular}[t]{l}$\conv(S)$\end{tabular}}}}%
\end{picture}%
\endgroup%

    \caption{Consider the set $S\subseteq\R^2$ (white region with solid boundary) and its convex hull $\conv(S)$. The projection of $S$ onto the horizontal axis is convex. On the other hand, the maximum vertical direction achieved by a point $x\in S$ between the two vertical lines differs from the maximum vertical direction achieved by a point $x\in\conv(S)$ between the two vertical lines.}
    \label{fig:b_convex_not_sufficient}
\end{figure}

The convex hulls of both $\O(n)$ and $\SO(n)$ can be described via linear matrix inequalities (LMIs). The first fact is well-known while the latter fact is due to \citet{saunderson2015semidefinite}. For ease of reference, we collect both facts in the following proposition.
\begin{proposition}[Classical/\cite{saunderson2015semidefinite}]
\label{thm:convSOn}
The convex hull of $\O(n)$ is equal to the operator norm ball and can be written as 
\begin{align*}
    \conv(O(n)) = \Bop(n)=\set{X\in\R^{n\by n}:\, \begin{pmatrix}
    I & X\\X^\intercal & I
    \end{pmatrix}\succeq 0}.
\end{align*}
There exist symmetric matrices $A_{i,j}\in\S^{2^{n-1}}$ indexed by $(i,j)\in[n]^2$ such that
\begin{align*}
    \conv(\SO(n))=\set{X\in\R^{n\by n}:\, \sum_{i=1}^n\sum_{j=1}^nX_{i,j}A_{i,j}\preceq I_{2^{n-1}}}.
\end{align*}
\end{proposition}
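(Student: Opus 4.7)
For the first statement, I would handle the two equalities separately. The identity $\Bop(n) = \set{X : \norm{X}_{\mathrm{op}} \leq 1}$ follows from a Schur complement argument: since $I \succ 0$, the condition $\begin{pmatrix} I & X \\ X^\intercal & I \end{pmatrix} \succeq 0$ is equivalent to $I - X^\intercal X \succeq 0$, i.e., all singular values of $X$ are at most $1$. For $\conv(\O(n)) = \Bop(n)$, the inclusion $\subseteq$ is immediate since every $X \in \O(n)$ has $\norm{X}_{\mathrm{op}} = 1$ and $\Bop(n)$ is convex. For the reverse inclusion, I would take any $X \in \Bop(n)$ with singular value decomposition $X = U \diag(\sigma_1, \ldots, \sigma_n) V^\intercal$, $\sigma_i \in [0,1]$, and write each $\sigma_i = \tfrac{1+\sigma_i}{2}(+1) + \tfrac{1 - \sigma_i}{2}(-1)$ to express $\diag(\sigma)$ as a convex combination of sign diagonals $\diag(\epsilon)$, $\epsilon \in \set{\pm 1}^n$. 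Multilinearity then writes $X$ as a convex combination of matrices of the form $U \diag(\epsilon) V^\intercal$, each of which lies in $\O(n)$.

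For the second statement, which is substantially more intricate, I would follow the approach of \citet{saunderson2015semidefinite} via the spin double cover $\pi : \spin(n) \to \SO(n)$. The plan is to leverage a real representation $\rho$ of $\spin(n)$ of dimension $2^{n-1}$, built from the Clifford algebra of $\R^n$, together with the key fact that the entries of the rotation $\pi(g)$ are homogeneous quadratic polynomials in the matrix entries of $\rho(g)$. Linearizing these quadratic expressions through a PSD variable $Y$ associated to $\rho(g)$, one obtains fixed symmetric matrices $A_{i,j} \in \S^{2^{n-1}}$ such that a specific linear image of $Y$ recovers the entries $X_{i,j}$ of $\pi(g)$. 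Combining this construction with a suitable LMI description of the image of $\rho(\spin(n))$ inside $\O(2^{n-1})$, one obtains a spectrahedral description of $\conv(\SO(n))$ of matrix size $2^{n-1}$, which after a change of normalization takes the stated form $\sum_{i,j} X_{i,j} A_{i,j} \preceq I_{2^{n-1}}$.

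The main obstacle is the nontrivial Clifford-algebraic bookkeeping required to define $\rho$ and the matrices $A_{i,j}$, and to verify that the resulting spectrahedron projects \emph{exactly} onto $\conv(\SO(n))$ rather than to a strict superset. Since the detailed construction and its correctness are established in \citet{saunderson2015semidefinite}, I would invoke that result directly rather than reconstructing it here; the proposition is stated purely for ease of reference in the sequel.
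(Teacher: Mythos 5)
The paper records this Proposition without proof, as a collection of known facts to cite, and for the second (nontrivial) part defers entirely to \citet{saunderson2015semidefinite}, exactly as you do. Your elaboration of the first part — Schur complement to identify $\Bop(n)$ with the unit operator-norm ball, then SVD plus the multilinear sign-diagonal decomposition to show any such matrix is a convex combination of orthogonal matrices — is the standard argument and is correct.
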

In light of this fact, the convex relaxations we consider in the $\O(n)$ setting can be directly solved with semidefinite programs~(SDPs), assuming that $\cC$ is itself efficiently SDP-representable.
In contrast, the convex relaxations we consider in the $\SO(n)$ setting result in exponentially sized SDPs.
For this reason, we also offer new algorithms for optimizing over $\conv(\SO(n))$ in the settings we consider that do not rely on the description of $\conv(\SO(n))$ given in \cref{thm:convSOn}.

\subsection{Motivation}
\label{subsec:motivation}
We first discuss \emph{constrained} variants of Wahba's problem.
To set up Wahba's problem, imagine that a satellite in space wants to determine its relative rotation (with respect to a reference rotation) given the observed direction of some number of far-away stars (or other objects).

Formally, we are given a set of (unit) vectors $v_1 ,\dots, v_k \in \R^3$, corresponding to the known directions of the $k$ stars in the reference rotation, and (unit) vectors $u_1 ,\dots, u_k \in \R^3$, corresponding to the observed directions of the $k$ stars in the satellite's frame.
Our goal is to find a rotation minimizing the observation error
\begin{align*}
    \min_{X\in\SO(3)} \sum_{i=1}^k \norm{X u_i - v_i}_2^2.
\end{align*}
In \cite{wahba1965least}, it was observed that this is equivalent to a linear optimization problem over $\SO(3)$
\begin{align*}
    \max_{X\in\SO(3)} \ip{\sum_{i=1}^k u_i v_i^{\intercal},\, X},
\end{align*}
and that this problem can be solved in turn using a single SVD computation. \change{We will formalize this in \cref{lem:strcompute}.}

Now, suppose we are given additional information about the true rotation $X^*$. We will incorporate this additional information as hard constraints into Wahba' problem to get a constrained optimization problem over $\SO(3)$.

For example, we may
know that the true rotation $X^*$ is within some angle, $\delta$, of another rotation $X_0\in\SO(3)$.
In this case, we would need to solve the problem\footnote{\changetwo{Any $X \in \SO(3)$ can be thought of as a rotation of some angle $\delta$ around some axis, and this angle satisfies the equation $\tr(X) = 1+2\cos(\delta)$, which can be seen by considering the eigenvalues of $X$. Here, we say $X$ is at an angle $\delta$ from $Y$ if $Y^{\intercal}X$ is of angle $\delta$.}}
\begin{align*}
    \max_{X\in\SO(3)}\set{\ip{\sum_{i=1}^k u_i v_i^{\intercal}, X}:\, \langle X_0, X\rangle \ge 1 + 2 \cos(\delta)}.
\end{align*}
\cref{thm:oneconnected} below implies that this problem has the same optimal value as its convex relaxation.
We will additionally show that
the optimum value of this problem can be efficiently computed using convex optimization techniques, even for $n\geq 3$.

As a second example, we may have a few high-fidelity observations, in which case we could introduce constraints for those observations:
\begin{align*}
    \ip{X u_i, v_i} = \ip{u_iv_i^\intercal, X} \geq \cos(\delta_i).
\end{align*}
\cref{thm:opt_so_with_sut} below implies that feasibility problems with at most $n - 1$ such constraints and certain optimization problems over such constraints can be solved efficiently using convex optimization techniques. 

High-dimensional settings, where $n\geq 3$, have found use in modeling nonlinear transformations on manifolds~\cite{FunctionalMatching}.
In this setting, the goal is to learn a nonlinear transformation mapping one manifold to another based on given point--point correspondences.
\citet{FunctionalMatching} suggest modeling this problem as that of finding an orthogonal (linear) transformation in the \emph{space of functions} on the manifolds.\footnote{The function spaces are endowed with bases corresponding to (possibly a truncated set of) eigenfunctions of the Laplace--Beltrami operators.}
Note, this space of functions may be high dimensional.
In this function space, point--point correspondence constraints or symmetry constraints can be naturally expressed as linear constraints on the linear transformation.
Additional desirable properties of the nonlinear transformation can be encoded as an orthogonality constraint.
Thus, this problem has a natural interpretation as an optimization problem of the form \eqref{eq:general_problem_format} over $\O(n)$.

\subsection{Statement of Results}
Our main contributions show that certain linear images of $\SO(n)$ are convex and that certain problems of the form \eqref{eq:general_problem_format} and its variants can be solved efficiently. We give an overview of these results in the order of the sections they appear in; see also \cref{tab:summary}.
\change{For our algorithms, we assume we are able to perform basic arithmetic operations (including taking square roots) exactly.}

\begin{table}
\centering
\begin{tabular}{|l|ll|}
\hline
& Hidden convexity & Algorithms\\\hline
Diag.\ constraints & \cite[Theorem 8]{horn1954doubly} & \cref{thm:diagonal}\\
One constraint & \cref{thm:oneconnected,thm:twoconvex} & \cref{thm:2dAlgo}\\
SUT constraints & \cref{thm:opt_so_with_sut,thm:hidden_convexity_so_with_sut,thm:feasibility} & \cref{thm:ut_structure_dt}\\\hline
\end{tabular}

\caption{Summary of our hidden convexity results and algorithms for constrained optimization over $\SO(n)$.
We present accompanying results (\cref{thm:obstructions}) showing that our hidden convexity results are each ``maximal'' in certain senses \change{(see \Cref{sec:obstructions})}.}
\label{tab:summary}
\end{table}

\subsubsection{Feasibility problems on $\SO(n)$ with diagonal constraints}

A classical theorem of Horn, \cite[Theorem 8]{horn1954doubly}, gives a first example of a hidden convexity result on $\SO(n)$.
\changetwo{Recall, the parity polytope is defined as
\begin{align*}
    \PP_n \coloneqq \conv\set{x\in\set{\pm 1}^n:\, \prod_{i=1}^n x_i = 1}.
\end{align*}}
\begin{theorem}
    \label{thm:diagconv}
    Let $\textrm{diag}:\R^{n\by n}\to\R^n$ map a square matrix to its diagonal elements, then $\diag(\SO(n))$ is convex (in fact, polyhedral) and its image is given by the \emph{parity polytope} $\PP_n$.
\end{theorem}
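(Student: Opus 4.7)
The plan is to prove the two inclusions $\diag(\SO(n)) \subseteq \PP_n$ and $\PP_n \subseteq \diag(\SO(n))$, using the vertex and inequality descriptions of the parity polytope: $\PP_n = \conv\set{\epsilon \in \set{\pm 1}^n : \prod_i \epsilon_i = 1} = \set{x \in [-1,1]^n : \ip{\epsilon, x} \leq n - 2 \text{ for all } \epsilon \in \set{\pm 1}^n \text{ with } \prod_i \epsilon_i = -1}$. For the forward inclusion, the bounds $|X_{ii}| \leq 1$ are immediate since rows of $X \in \SO(n)$ are unit vectors. For the parity-odd facets, I fix $\epsilon \in \set{\pm 1}^n$ with $\prod_i \epsilon_i = -1$ so that $\Diag(\epsilon) \in \O(n) \setminus \SO(n)$, and rewrite $\ip{\epsilon, \diag(X)} = \ip{\Diag(\epsilon), X}$. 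I would then invoke the variant of von Neumann's trace inequality specialized to $\SO(n)$: for any $A \in \R^{n\by n}$ with SVD $A = U \Sigma V^\intercal$ and ordered singular values $\sigma_1 \geq \cdots \geq \sigma_n$, one has $\max_{X \in \SO(n)} \ip{A, X} = \sum_{i=1}^{n-1} \sigma_i + \det(U V^\intercal)\, \sigma_n$. For $A = \Diag(\epsilon)$, all singular values equal $1$ and $\det(U V^\intercal) = \prod_i \epsilon_i = -1$, yielding the required bound of $n - 2$.

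For the reverse inclusion, the easy observation is that every vertex $\epsilon$ of $\PP_n$ is the diagonal of $\Diag(\epsilon) \in \SO(n)$, so $\conv(\diag(\SO(n))) \supseteq \PP_n$. Combined with the forward inclusion this gives $\conv(\diag(\SO(n))) = \PP_n$, but upgrading this to $\diag(\SO(n)) = \PP_n$ requires showing $\diag(\SO(n))$ is actually convex. My plan here is to give an explicit construction: for each $x \in \PP_n$, exhibit some $X \in \SO(n)$ with $\diag(X) = x$. As motivation, in the $n = 3$ case the Rodrigues parametrization of rotations by $(\theta, v) \in [0, \pi] \by S^2$ gives $\diag(X(\theta, v)) = \cos\theta \cdot \mones + (1 - \cos\theta)(v_1^2, v_2^2, v_3^2)$, which sweeps out exactly the tetrahedron $\PP_3$ as $(\theta, v)$ varies. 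For general $n$, I would argue inductively: apply a suitably chosen $2$-plane Givens-type rotation to set one diagonal entry to its prescribed target, then invoke the inductive hypothesis on the $(n-1)$-dimensional problem in the orthogonal complement.

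The main obstacle is executing this inductive step cleanly for arbitrary $n$. A single planar rotation couples two diagonal entries, so setting one to its prescribed value constrains the other; moreover, one must verify that the residual target lies in the $(n-1)$-dimensional parity polytope so the induction proceeds. The key insight making this feasible is that the parity-even structure defining $\PP_n$ matches exactly the constraint $\det X = +1$: planar rotations preserve determinant, so composing an initial $\SO(n)$ matrix with Givens rotations keeps the construction inside $\SO(n)$, and the parity polytope's defining constraint is precisely what ensures the reduced target continues to live in a feasible (parity-compatible) smaller parity polytope, so that the induction terminates consistently.
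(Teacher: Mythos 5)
Your forward inclusion $\diag(\SO(n))\subseteq\PP_n$ is fine: $|X_{ii}|\le 1$ is immediate, and for an odd sign vector $\epsilon$ the bound $\ip{\Diag(\epsilon),X}\le n-2$ does follow from the $\SO(n)$ trace formula $\max_{X\in\SO(n)}\ip{A,X}=\sum_{i<n}\sigma_i+\det(UV^\intercal)\sigma_n$ (a known fact, which you invoke rather than prove, but that is reasonable). The genuine gap is in the hard direction $\PP_n\subseteq\diag(\SO(n))$, which is exactly the part you leave as a plan. Your inductive step --- ``apply a Givens rotation to set one diagonal entry, then invoke the inductive hypothesis on the orthogonal complement'' --- is not executed, and as described it does not go through: an $X\in\SO(n)$ with prescribed $X_{11}$ does not split off an $\SO(n-1)$ block on the complement (its first row and column are generally not $e_1$), and in the natural factorization $X=G\cdot(1\oplus X')$ with $G$ a rotation in the $(1,2)$-plane one gets $\diag(X)=(\cos\theta,\ \cos\theta\cdot X'_{11},\ X'_{22},\dots)$, so the ``residual target'' for $X'$ involves dividing by $\cos\theta$ and need not lie in $[-1,1]^{n-1}$, let alone in $\PP_{n-1}$. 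Your closing ``key insight'' (that parity of the target matches $\det X=+1$) is an assertion, not an argument; nothing in the proposal verifies that a parity-compatible reduced target exists at each step, which you yourself identify as the main obstacle.

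For comparison, the paper does not re-prove Horn's theorem directly but its constructive Theorem 3 supplies precisely this missing direction, by a different mechanism: given $d\in\PP_n$, choose a maximal-torus element $R(\theta_1,\dots,\theta_k)$ whose diagonal $c$ has $\ip{c,1_n}=\ip{d,1_n}$ and majorizes $d$ --- the majorization inequalities are checked using exactly the facet inequalities $\ip{d,1_n-2\cdot 1_S}\le n-2$ of $\PP_n$ --- then apply the constructive Schur--Horn lemma of Chan--Li to produce a product $U$ of Givens rotations with $\diag(U^\intercal\Diag(c)U)=d$, and conclude $\diag(U^\intercal R U)=d$ because $R-\Diag(c)$ is skew-symmetric, so conjugation contributes nothing to the diagonal. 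If you want to salvage your outline, you should either carry out an induction of this majorization type or otherwise make the reduction step precise; as written, the surjectivity half of the theorem is not proved.
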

\changetwo{In \cref{app:separation_pp}, we give} efficient separation and optimization oracles for $\PP_n$. \change{We give a definition of a separation oracle in \cref{sec:son_one_constraint}.}
It follows that feasibility problems on $\SO(n)$ with convex constraints on the diagonal elements, i.e., given convex $\cC\subseteq \R^n$, decide the feasibility of 
\begin{align}
    \label{eq:diagX_feas}
    \set{X\in\SO(n):\, \diag(X)\in\cC},
\end{align}
can be decided by testing the feasibility of $\PP_n\cap \cC$.

In \cref{sec:feas_diag}, we complete this picture by showing how to construct an element of \eqref{eq:diagX_feas} given $d\in \PP_n\cap \cC$.
\begin{restatable}{theorem}{thmdiagonal}
    \label{thm:diagonal}
    Given $d\in\PP_n$, it is possible to construct $X\in\SO(n)$ satisfying $\diag(X) = d$ in time $O(n^2)$.
\end{restatable}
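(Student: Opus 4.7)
I would prove this by giving an explicit recursive algorithm that takes $d \in \PP_n$ and produces $X \in \SO(n)$ with $\diag(X) = d$. At each level of recursion, a single planar Givens rotation is chosen so as to reduce the problem to a strictly smaller dimension, with $O(n)$ work per level. The base cases $n \le 2$ are immediate: $\PP_1 = \{1\}$ forces $X = [1]$, while $\PP_2 = \{(t,t) : t \in [-1,1]\}$ is realized by the planar rotation with cosine $t$.

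For the inductive step with $n \ge 3$, I would distinguish cases by the structure of $d$. If some $d_i = 1$, the remaining coordinates lie in $\PP_{n-1}$, since the face $\{d_i = 1\} \cap \PP_n$ is canonically isomorphic to $\PP_{n-1}$; we recursively compute $X' \in \SO(n-1)$ and set $X = X' \oplus [1]$ (placing the $1$ at position $i$). If some $d_i = -1$, then any valid $X$ has the block form $X = Y \oplus [-1]$ with $Y \in \O(n-1) \setminus \SO(n-1)$, whose diagonal lies in the ``anti-parity polytope'' and which is isomorphic to $\PP_{n-1}$ via a single sign flip, so we again recurse. Otherwise all $|d_i| < 1$, and the key move is to create a boundary coordinate using one Givens rotation: writing $X = R_{ij}(\theta) \cdot X'$ where $X'_{ii} = 1$, a direct computation yields $\cos\theta = d_i$ and $X'_{jj} = d_j/d_i$, reducing to a $\PP_{n-1}$ problem with the updated diagonal $(d_k)_{k \ne i, j}$ augmented by $d_j/d_i$.

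The main obstacle is verifying that the updated diagonal lies in $\PP_{n-1}$; this depends on choosing $(i, j)$ carefully. I expect that taking $i$ to be the index of maximum $|d_i|$ (ensuring $|d_j/d_i| \le 1$) together with a sign-aware choice of $j$ suffices, with the verification reducing to checking the facet-defining inequalities of $\PP_{n-1}$ against those inherited from $\PP_n$. Degenerate situations, such as $d_i = 0$ for the candidate pivot or $d$ lying on a lower-dimensional face of $\PP_n$, either immediately trigger the $d_i = \pm 1$ cases above or admit a $\PP_{n-2}$ reduction by pairing equal coordinates into a single $\SO(2)$ block. Each recursive step costs $O(n)$ arithmetic for forming and applying the Givens rotation, and the recursion depth is at most $n-1$, yielding the claimed $O(n^2)$ bound.
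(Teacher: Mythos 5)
There is a genuine gap at the heart of your inductive step: the claim that, after the single Givens rotation, the updated diagonal $\bigl(d_j/d_i,\ (d_k)_{k\neq i,j}\bigr)$ lies in $\PP_{n-1}$ is not something you verify, and in fact it is false. Take $n=3$ and $d=(\tfrac12,\tfrac12,\tfrac12)$, which lies in $\PP_3$ (all inequalities $\ip{d,1_3-2\cdot 1_S}\leq 1$ for odd $S$ hold). Whatever pivot pair $(i,j)$ you choose, the reduced diagonal is $(1,\tfrac12)$ or $(\tfrac12,1)$ up to order, and neither lies in $\PP_2=\set{(t,t):t\in[-1,1]}$, so the recursion cannot continue. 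Your degenerate fallback of pairing equal coordinates into a single $\SO(2)$ block fails on the same example: after absorbing $d_1=d_2=\tfrac12$ into an $\SO(2)$ block, the leftover entry $\tfrac12$ is not in $\PP_1=\set{1}$. The underlying reason is a degree-of-freedom count: your ansatz $X=R_{ij}(\theta)\,(1\oplus X'')$ parameterizes a family of dimension $1+\binom{n-1}{2}$, which is $n-2$ short of $\dim\SO(n)=\binom{n}{2}$, so for $n\geq 3$ it simply cannot reach every diagonal in $\PP_n$; one planar rotation per level of recursion, applied on one side only, is not enough.

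For contrast, the paper avoids this by decoupling the problem into two steps with much more freedom. It first chooses angles so that the block-rotation matrix $R(\theta_1,\dots,\theta_m)$ has diagonal $c$ (entries equal in pairs, mostly $\pm 1$, one pair interpolated) that \emph{majorizes} $d$; the defining inequalities of $\PP_n$ are exactly what make this majorization work. It then invokes the constructive Schur--Horn theorem of Chan--Li to produce $U\in\SO(n)$, a product of $n-1$ planar rotations, with $\diag(U^\intercal \Diag(c)\,U)=d$, and finally observes that $R-\Diag(c)$ is skew-symmetric, so $\diag(U^\intercal R\,U)=\diag(U^\intercal\Diag(c)\,U)=d$ while $U^\intercal R U\in\SO(n)$. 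Conjugation (two-sided action) by $n-1$ Givens rotations is what supplies the missing degrees of freedom, and each conjugation touches only a $2\times 2$ block, giving the $O(n^2)$ bound. If you want to salvage a recursive Givens-based scheme, you would need each step to allow a genuinely richer update than multiplying a single block-diagonal factor on one side; as written, the reduction step is the missing idea, not a routine verification.
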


\subsubsection{Optimization on $\SO(n)$ with one constraint}
\label{subsubsec:results_one_constraint}
The main result of \cref{sec:son_one_constraint} is that the intersection of $\SO(n)$ with any codimension-one affine space is connected.
\begin{restatable}{theorem}{thmoneconnected}
    \label{thm:oneconnected}
    Let $n\geq 3$, $A \in \R^{n\times n}$, and $c\in\R$. Then, the set $\SO(n) \cap \set{X\in\R^{n\by n}:\, \ip{A,X} = c}$ is connected.
\end{restatable}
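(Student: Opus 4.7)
The plan is to first reduce to the case of diagonal $A$ via the singular value decomposition (SVD) and then to argue connectedness by induction on $n$.

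\emph{SVD reduction.} Via the SVD, write $A = U \Sigma V^\intercal$ with $U, V \in \O(n)$ and $\Sigma = \diag(\sigma_1, \ldots, \sigma_n)$ having nonnegative entries. If $\det(U)\det(V) = -1$, negate a single column of $V$ together with the corresponding diagonal entry of $\Sigma$, so that $U, V \in \SO(n)$ at the cost of letting $\sigma_n$ be negative. Then $\ip{A, X} = \ip{\Sigma, U^\intercal X V}$ and the map $Y \mapsto U^\intercal X V$ is a self-diffeomorphism of $\SO(n)$, so it suffices to prove the claim for $A = \Sigma$ diagonal. Henceforth write $\phi(X) = \sum_i \sigma_i X_{ii}$.

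\emph{Inductive step} ($n \geq 4$). Consider the fiber bundle $\pi: \SO(n) \to S^{n-1}$, $X \mapsto X e_1$, with fiber $\SO(n-1)$. Choosing a continuous section $v \mapsto R_v \in \SO(n)$ (with $R_v e_1 = v$) over $S^{n-1}\setminus\{-e_1\}$, each $X \in \pi^{-1}(v)$ factors as $X = R_v \cdot \diag(1, Y')$ for some $Y' \in \SO(n-1)$. A direct computation yields $\phi(X) = \sigma_1 v_1 + \ip{M(v), Y'}$ for an $(n-1) \times (n-1)$ matrix $M(v)$ depending continuously on $v$. The fiber of $\phi^{-1}(c)$ over $v$ is thus a codimension-one linear slice of $\SO(n-1)$, which is connected by the inductive hypothesis. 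Connectedness of $\phi^{-1}(c) \cap \SO(n)$ then reduces to connectedness of the image $\Omega := \pi(\phi^{-1}(c) \cap \SO(n)) \subseteq S^{n-1}$.

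\emph{Base case} ($n = 3$). I would parametrize $X \in \SO(3)$ by axis--angle, $X = \cos\theta\, I + \sin\theta\, [\omega]_\times + (1 - \cos\theta)\,\omega\omega^\intercal$, with $\omega \in S^2$, $\theta \in [0, \pi]$, and identification $(\omega, \pi) \sim (-\omega, \pi)$. The constraint becomes $\cos\theta\,(s - q(\omega)) + q(\omega) = c$ where $s = \sum_i \sigma_i$ and $q(\omega) = \sum_i \sigma_i \omega_i^2$. A case analysis on the sublevel (or superlevel) sets $\{\omega \in S^2 : q(\omega) \leq t\}$ of the quadratic form $q$, combined with the gluing $\omega \sim -\omega$ at $\theta = \pi$ (which unites the two ``caps'' near $\pm e_3$ of a sublevel set into a topological sphere), shows that the solution set is connected in $\SO(3)$.

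\emph{Main obstacle.} The hardest step is proving that $\Omega \subseteq S^{n-1}$ is connected. Using a Wahba/Procrustes-type characterization, $\Omega = \{v : c - \sigma_1 v_1 \in [a(v), b(v)]\}$ where $a(v), b(v)$ are continuous functions built from the singular values of $M(v)$ and $\sign \det M(v)$, and one must analyze how this interval-valued constraint varies with $v$. The hypothesis $n \geq 3$ is essential throughout: for $n = 2$ the set $\SO(2) \cap \{\tr(X) = c\}$ consists of two isolated points whenever $c \in (-2, 2)$, reflecting the lack of ``room'' in $\SO(2)$ to join them.
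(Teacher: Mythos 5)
Your strategy (SVD reduction, then induction along the fibration $\pi:\SO(n)\to S^{n-1}$, $X\mapsto Xe_1$, with an axis--angle analysis for $n=3$) is genuinely different from the paper's, but as written it has a real gap at exactly the point you flag: connectedness of $\Omega=\pi\bigl(\phi^{-1}(c)\cap\SO(n)\bigr)\subseteq S^{n-1}$ is never proved, and nothing in your setup makes it easier than the theorem itself. Writing $\Omega=\set{v: c-\sigma_1 v_1\in[a(v),b(v)]}$ with $b(v)=\str(M(v))$ and $a(v)=-\str(-M(v))$, you are asking that the feasible region of a $v$-dependent two-sided constraint on the sphere be connected; for generic continuous data $a\le b$ a set of the form $\set{v\in S^{n-1}: a(v)\le g(v)\le b(v)}$ can easily be disconnected, so any proof must exploit the specific structure of $M(v)$ --- i.e.\ it is a hidden-connectedness statement of the same flavor as \cref{thm:oneconnected}, now living on $S^{n-1}$ rather than on a smaller special orthogonal group, so the induction gives you no handle on it. The base case $n=3$ is likewise only sketched: the level sets of $q(\omega)=\sum_i\sigma_i\omega_i^2$ on $S^2$ are typically disconnected (antipodal pairs of components), and one must track the degenerate loci $\theta\in\set{0,\pi}$ and $\set{\omega: q(\omega)=s}$ through the identification $(\omega,\pi)\sim(-\omega,\pi)$; ``a case analysis shows'' is precisely where the work would be. (A smaller, fixable point: to pass from ``all fibers connected and $\Omega$ connected'' to connectedness of the slice you should note that $\pi$ restricted to the compact slice is a closed surjection onto $\Omega$; without some such hypothesis the implication can fail.)

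For comparison, the paper avoids any projection-to-the-sphere step and is not inductive: assuming the slice splits into disjoint nonempty compact pieces $\cU,\cV$, it forms the continuous map $f(X)=\bigl(\ip{A,X}-c,\ \dist_\cU(X)-\delta/2\bigr)$ into $\R^2$, uses the maximal torus to build an explicit loop through a point of $\cU$ and a point of $\cV$ on which $\ip{A,\cdot}-c$ is nonnegative on one half and nonpositive on the other, shows $f_*(\gamma)$ is homotopic in $\R^2\setminus(0,0)$ to a loop winding once around the origin, and then concludes from $\pi_1(\SO(n))\cong\Z/2\Z$ (every homomorphism into $\Z\cong\pi_1(\R^2\setminus(0,0))$ is trivial) that $(0,0)\in f(\SO(n))$, a contradiction. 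If you want to salvage your inductive route you would need an independent argument for the connectedness of $\Omega$, and it is hard to see how to get one without reintroducing a topological input of this kind.
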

A corollary of \cref{thm:oneconnected}, \change{which we prove in \cref{sec:son_one_constraint}}, is the following:
\begin{restatable}{corollary}{thmtwoconvex}
    \label{thm:twoconvex}
    Let $n\geq 3$ and let $\pi : \SO(n) \rightarrow \R^2$ be linear, then $\pi(\SO(n))$ is convex.
\end{restatable}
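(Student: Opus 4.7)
The plan is to deduce convexity of $\pi(\SO(n)) \subseteq \R^2$ from the standard characterization: a closed subset $K \subseteq \R^2$ is convex if and only if its intersection with every line is convex (equivalently, connected, since a closed connected subset of a line is an interval). The forward direction is trivial; for the converse, if $K$ fails to be convex then two points $x, y \in K$ witness a nonconvex intersection of $K$ with the line through them.

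First I would note that $\pi(\SO(n))$ is compact, hence closed, as the continuous image of the compact set $\SO(n)$. Writing $\pi(X) = (\ip{A_1, X}, \ip{A_2, X})$ for some $A_1, A_2 \in \R^{n \times n}$, I would then fix an arbitrary line $\ell = \set{(y_1,y_2) \in \R^2 :\, \alpha y_1 + \beta y_2 = c}$ with $(\alpha,\beta) \neq 0$ and compute
\begin{align*}
    \pi^{-1}(\ell) \cap \SO(n) = \SO(n) \cap \set{X \in \R^{n \times n} :\, \ip{\alpha A_1 + \beta A_2,\, X} = c}.
\end{align*}

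Now the key step: apply \cref{thm:oneconnected}. If $\alpha A_1 + \beta A_2 \neq 0$, then this is exactly a codimension-one affine slice of $\SO(n)$, so it is connected. In the degenerate cases where $\alpha A_1 + \beta A_2 = 0$, the set $\pi^{-1}(\ell) \cap \SO(n)$ is either all of $\SO(n)$ (which is connected for $n \geq 2$) or empty, depending on whether $c = 0$. In every case the preimage slice is connected. Taking the continuous image under $\pi$ yields
\begin{align*}
    \ell \cap \pi(\SO(n)) = \pi\!\left(\pi^{-1}(\ell) \cap \SO(n)\right),
\end{align*}
which is connected, and being a closed connected subset of the line $\ell$, it is a closed interval (possibly empty or a single point). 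In particular it is convex. Since this holds for every line $\ell \subseteq \R^2$, the characterization above implies that $\pi(\SO(n))$ is convex.

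There is no real obstacle here beyond verifying the pullback identification and invoking \cref{thm:oneconnected} correctly; the one subtlety worth stating carefully is the handling of the trivial linear functional, which is why the hypothesis $n \geq 3$ is only needed through its role in \cref{thm:oneconnected} rather than appearing afresh in the corollary.
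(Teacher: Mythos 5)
Your proof is correct and takes essentially the same approach as the paper: reduce convexity of a planar set to connectedness of all of its line-intersections, then invoke \cref{thm:oneconnected} for the codimension-one slices $\SO(n) \cap \set{X : \ip{\alpha A_1 + \beta A_2, X} = c}$. The paper states this in a single sentence; your version merely spells out the preimage identification and the degenerate case $\alpha A_1 + \beta A_2 = 0$, both of which are implicit there.
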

This follows as a set is convex if and only if its intersection with any one-dimensional affine subspace (i.e., a line) is connected.
\cref{thm:oneconnected} may be of importance in its own right\changetwo{, as for some hyperplanes $H$, it may be the case that the set $\SO(n)\cap H$ is a \emph{connected} submanifold of $\SO(n)$, in which case it may be possible to use Riemannian optimization methods to optimize functions on this set.}

While these results imply that it is possible to use convex optimization techniques to solve problems of the form
\begin{equation}
    \max_{X \in \SO(n)} \left\{\ip{A, X} : \ip{B, X} \in [a,b]\right\}\label{eq:ndmax}
\end{equation}
(for example by replacing $\SO(n)$ with $\conv(\SO(n))$), it is not clear how to do so \emph{efficiently} when $n$ is large.
This is because \Cref{thm:convSOn} only guarantees an exponentially sized LMI representation for $\conv(\SO(n))$.

To address this issue, we give an efficient algorithm for problems of the form \eqref{eq:ndmax} based on running the ellipsoid algorithm on the two-dimensional image of $\SO(n)$.
It is noteworthy that because we use the ellipsoid algorithm in a constant-dimensional space, we do not face the infamously high running times of the ellipsoid method in high-dimensional spaces.

\begin{restatable}{theorem}{thmtwodalgo}
    \label{thm:2dAlgo}
Let $n\geq 3$, $A,\, B\in\R^{n\by n}$ with $\norm{A}_{\tr} = \norm{B}_{\tr} = 1$. 
    Here $\|\cdot\|_{\tr}$ is the trace norm, defined formally in \Cref{sec:prelim}.
    Let $X^*$ be the optimal solution to \eqref{eq:ndmax}. We can compute $\ip{A,X^*}$ and $\ip{B,X^*}$ within an additive error of $\epsilon$ in time 
    \[
        O\left( n^3\log\left(\frac{1}{\epsilon}\right)^2\right).
    \]
     Here, $n^3$ is the time complexity of computing the SVD of an $n\times n$ matrix.

    Moreover, we will return $\alpha, \beta \in \R$ so that 
    $|\alpha| + |\beta| = 1$ and
    \[
        \ip{\alpha A + \beta B, X^*} + \epsilon \ge \max \{\ip{\alpha A + \beta B, X} : X \in \SO(n)\}.
    \]
\end{restatable}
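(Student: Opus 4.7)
The plan is to exploit the two-dimensional hidden convexity result (\cref{thm:twoconvex}) to reduce the problem to a convex optimization in $\R^2$, and then to solve this two-dimensional problem with the ellipsoid method using a separation oracle built from SVD computations. Concretely, I would define $\pi : \SO(n) \to \R^2$ by $\pi(X) = (\ip{A,X}, \ip{B,X})$ and let $K \coloneqq \pi(\SO(n))$. By \cref{thm:twoconvex}, $K$ is convex, and since $\|A\|_{\tr} = \|B\|_{\tr} = 1$ while every $X\in\SO(n)$ has $\|X\|_{\op}=1$, the trace-norm/operator-norm duality gives $K\subseteq[-1,1]^2$. Problem \eqref{eq:ndmax} then rewrites as the two-dimensional convex program
\[
    \max\set{x_1 :\, (x_1,x_2)\in K,\ x_2\in[a,b]}.
\]

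The main ingredient is access to the support function of $K$. For any $(\alpha,\beta)\in\R^2$,
\[
    h_K(\alpha,\beta) = \max_{X\in\SO(n)} \ip{\alpha A + \beta B,\, X}
\]
is an unconstrained linear optimization over $\SO(n)$, which by the classical Wahba solution reduces to a single SVD of $\alpha A+\beta B$ (with a sign adjustment on the smallest singular value enforcing $\det=1$) and costs $O(n^3)$. To convert this support oracle into a separation oracle for $K$, given a query point $p\in\R^2$, I would look for a direction $(\alpha,\beta)$ with $|\alpha|+|\beta|=1$ maximizing the concave function $(\alpha,\beta)\mapsto \alpha p_1+\beta p_2 - h_K(\alpha,\beta)$. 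Splitting by the sign quadrant of $(\alpha,\beta)$ reduces this to four one-dimensional concave maximizations on $[0,1]$, each solvable to additive tolerance $\epsilon'$ in $O(\log(1/\epsilon'))$ function evaluations via golden-section search. A positive maximum exhibits a separating hyperplane with normal $(\alpha,\beta)$; a non-positive maximum certifies $p\in K$ within the chosen tolerance.

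With this oracle in hand, I would run the ellipsoid method on $\R^2$ intersected with the strip $\R\times[a,b]$, maximizing the linear functional $x_1$. Because the ambient dimension is constant, $O(\log(1/\epsilon))$ outer iterations suffice, each invoking the separation oracle at a total cost of $O(n^3\log(1/\epsilon))$, yielding the stated $O(n^3\log^2(1/\epsilon))$ bound. The certificate direction $(\alpha,\beta)$ requested by the theorem is read off as the $\ell_1$-normalized normal of the last supporting hyperplane produced for $K$ at the returned near-optimum $(x_1^*,x_2^*)=(\ip{A,X^*},\ip{B,X^*})$: by definition of the support function, $\alpha x_1^*+\beta x_2^*$ matches $h_K(\alpha,\beta) = \max_{X\in\SO(n)}\ip{\alpha A+\beta B,X}$ up to the targeted accuracy, which is precisely the inequality claimed in the statement.

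The main obstacle will be bookkeeping the nested approximation errors. The outer ellipsoid method only requires an approximate separation oracle, but I must choose the inner golden-section tolerance $\epsilon'$ and the polynomial relationship between $\epsilon$, the $\ell_1$/$\ell_\infty$ comparability constants on $\R^2$, and the containment radius $R=\sqrt{2}$, carefully enough that the approximation guarantees propagate cleanly to additive error $\epsilon$ on both coordinates. A minor edge case is when $K$ is lower-dimensional (e.g., $A$ and $B$ are scalar multiples), in which case the problem collapses to an unconstrained optimization over $\SO(n)$ and a single SVD suffices; this is easy to detect up front and treat separately.
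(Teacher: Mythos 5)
Your proposal is correct and follows essentially the same route as the paper: reduce \eqref{eq:ndmax} via \cref{thm:twoconvex} to a convex problem over the planar image $\pi(\SO(n))$, run the ellipsoid method in constant dimension, and implement the weak separation oracle by approximately minimizing $y\mapsto \str(\pi^*(y))-\ip{y,x}$ over the $\ell_1$-sphere, split into four one-dimensional convex problems solved by golden-section search with each function evaluation being one SVD. Your handling of the certificate $(\alpha,\beta)$ and the degenerate-image edge case is slightly more explicit than the paper's writeup, but the argument and the $O\bigl(n^3\log^2(1/\epsilon)\bigr)$ accounting are the same.
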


\begin{remark}
Let $\alpha,\,\beta$ denote the quantities returned in \cref{thm:2dAlgo}. While \Cref{thm:2dAlgo} does not directly return a minimizer of \eqref{eq:ndmax}, we believe that
any element of 
\begin{align*}
    \argmax_{X\in\SO(n)}\ip{\alpha A + \beta B, X}
\end{align*}
should be a good approximation of a true minimizer under mild conditions. Such an element can be computed from $\alpha A + \beta B$ in the time of a single SVD decomposition.
Analyzing this procedure is outside the scope of the current paper and we leave this question for future work.
\end{remark}

\subsubsection{Feasibility and optimization on $\SO(n)$ with strictly upper triangular constraints}
The last class of constraints we consider are constraints on the strictly upper triangular (SUT) entries of $X\in\SO(n)$.
Our main result in this direction shows that not only is the projection of $\SO(n)$ onto its SUT entries convex, but furthermore, it is possible to optimize certain linear functions subject to convex constraints on the SUT entries using convex optimization.

Let $\pi_\sut(X) = (X_{ij})_{i<j}\in\R^{\binom{n}{2}}$ denote the projection of $X$ onto its SUT entries (i.e., those entries $X_{ij}$ such that $i<j$).
We will consider constraining the value of $\pi_\sut(X)$ for $X\in\SO(n)$ and then optimizing a linear function over this set.
Let $A\in\R^{n\by n}$ and let $\cC$ be a nonempty closed convex subset of $\pi_\sut(\Bop(n))$.
Recall $\Bop(n)$ is the operator norm ball and is the same as $\conv(\O(n))$ by \Cref{thm:convSOn}.
We consider the problems
\begin{align}
    &\sup_{X\in\SO(n)}\set{\ip{A,X}:\, \pi_\sut(X)\in \cC} \label{eq:opt_over_so_with_sut}\\
    &\qquad\leq \sup_{X\in\O(n)}\set{\ip{A,X}:\, \pi_\sut(X)\in \cC}\label{eq:opt_over_o_with_sut}\\
    &\qquad\leq 
    \max_{X\in\Bop(n)}\set{\ip{A,X}:\, \pi_\sut(X)\in \cC}. \label{eq:opt_over_bop_with_sut}
\end{align}

\change{The $\max$ in \eqref{eq:opt_over_bop_with_sut} will be justified in \cref{sec:sut_constraints}.}
Our main result on this topic is:
\begin{restatable}{theorem}{thmoptsosut}
    \label{thm:opt_so_with_sut}
    Let $A\in\R^{n\by n}$ be a diagonal matrix and let $\cC\subseteq\pi_\sut(\Bop(n))$ be a nonempty closed convex set. Then, equality holds between \eqref{eq:opt_over_o_with_sut} and \labelcref{eq:opt_over_bop_with_sut}.
    If additionally $\det(A)\geq 0$, then equality holds between \labelcref{eq:opt_over_so_with_sut,eq:opt_over_o_with_sut,eq:opt_over_bop_with_sut}.
\end{restatable}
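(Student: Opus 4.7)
The plan is to reduce the theorem to a pointwise statement and then apply SDP duality. First I would fix $v\in\cC$ and define $f_k(v)$ as the value of problem~\eqref{eq:opt_over_bop_with_sut}, \eqref{eq:opt_over_o_with_sut}, or \eqref{eq:opt_over_so_with_sut} (for $k=1,2,3$ respectively) restricted by the additional equality $\pi_\sut(X)=v$. Each of the three problems then equals $\sup_{v\in\cC} f_k(v)$, so it suffices to prove $f_1(v)=f_2(v)$ for every admissible $v$, and $f_1(v)=f_2(v)=f_3(v)$ whenever $\det(A)\ge 0$.

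For $v$ in the relative interior of $\pi_\sut(\Bop(n))$, the problem $f_1(v)$ is an SDP with a Slater point (any $X$ with $\pi_\sut(X)=v$ and $\|X\|_{op}<1$). I would express $X\in\Bop(n)$ via the LMI $\begin{pmatrix}I&X\\X^\intercal&I\end{pmatrix}\succeq 0$, collect the multipliers $\mu_{ij}$ ($i<j$) for the SUT equalities into a strictly upper triangular matrix $M$, and write the dual slack as $Z=\begin{pmatrix}\Lambda_1 & (M-A)/2 \\ (M-A)^\intercal/2 & \Lambda_2\end{pmatrix}\succeq 0$. Complementary slackness $ZY=0$, with $Y$ the primal LMI, expands to
\[
    \Lambda_1 \;=\; \tfrac{1}{2}(A-M)X^\intercal, \qquad (A-M)(X^\intercal X - I) \;=\; 0,
\]
plus analogues obtained by swapping $X$ and $X^\intercal$. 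The crucial observation is that since $A$ is diagonal and $M$ is strictly upper triangular, $A-M$ is upper triangular with diagonal entries $A_{ii}$; when each $A_{ii}$ is nonzero, $A-M$ is invertible, and the displayed equation forces $X^\intercal X=I$, i.e., $X\in\O(n)$. This yields $f_1(v)=f_2(v)$. The case of some $A_{ii}=0$ would be handled by perturbing $A$ to $A+\epsilon I$, applying the above, and passing to the limit via compactness of $\O(n)$ and continuity of the objective.

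For $f_2(v)=f_3(v)$ under $\det(A)\ge 0$, I would additionally use $\Lambda_1\succeq 0$. Taking determinants in the displayed equation yields $\det(\Lambda_1) = 2^{-n}\det(A-M)\det(X) = 2^{-n}\det(A)\det(X) \ge 0$. When $\det(A)>0$, combined with $X\in\O(n)$, this forces $\det(X)=+1$, so $X\in\SO(n)$. The case $\det(A)=0$, which requires some $A_{ii}=0$, would be handled by perturbing each vanishing diagonal entry to $\pm\epsilon$ with signs chosen so that the perturbed $\det(A')>0$, applying the above, and extracting a limit point in the compact set $\SO(n)$. Slater failure at the relative boundary of $\pi_\sut(\Bop(n))$ would be handled by the usual approximation with relatively interior $v$'s.

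The main obstacle I anticipate is the careful verification that the various limiting arguments -- perturbing $A$ toward nondegeneracy, approximating $v$ toward the interior of $\pi_\sut(\Bop(n))$, and aligning signs of the perturbation to preserve $\det(A')>0$ -- deliver primal optimizers that converge inside $\O(n)$ or $\SO(n)$. Once those technical points are settled, the core of the proof is the elementary observation that the upper triangularity of $A-M$, together with its nonvanishing diagonal, is enough to push the primal optimum into $\O(n)$, and that the extra determinant sign condition further promotes it into $\SO(n)$.
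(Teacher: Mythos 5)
Your proof is correct, and it takes a genuinely different route at the key technical step. The paper proves an analogous generic lemma (its \cref{prop:opt_so_with_sut_generic}: diagonal $A$ with $\det(A)\neq0$, $\sigma$ in the interior) by writing the value of \eqref{eq:opt_over_bop_with_sut} with $\pi_\sut(X)=\sigma$ via \emph{trace-norm duality}
$\min\{\ip{\sigma,\lambda}+\|Y\|_\tr : Y+\pi_\sut^*(\lambda)=A\}$, observes that $\hat Y=A-\pi_\sut^*(\hat\lambda)$ is upper triangular with diagonal $\diag(A)$ and hence has full rank, and then extracts $\hat X=UV^\intercal\in\O(n)$ from the SVD $\hat Y=U\Sigma V^\intercal$ together with the equality $\|\hat Y\|_\tr = \ip{\hat Y,\hat X}$; the sign of $\det(\hat X)$ is then $\sign(\det(\hat Y))=\sign(\det(A))$. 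You instead use the LMI representation $\begin{pmatrix}I&X\\X^\intercal&I\end{pmatrix}\succeq0$ of $\Bop(n)$, set up the SDP dual, and read off from complementary slackness the equations $\Lambda_1=\frac12(A-M)X^\intercal$ and $(A-M)(X^\intercal X - I)=0$; the same upper-triangularity of $A-M$ with diagonal $\diag(A)$ makes $A-M$ invertible when $\det(A)\neq0$, forcing $X^\intercal X=I$ directly, and $\det\Lambda_1\geq0$ handles the sign of $\det(X)$. The two arguments are the same duality in different coordinates (the trace-norm problem is exactly the reduced SDP dual after eliminating $\Lambda_1,\Lambda_2$), but your complementary-slackness derivation of $X^\intercal X=I$ is arguably more direct than the SVD argument, while the paper's is slightly more self-contained (no need to invoke $ZY=0$ for PSD matrices). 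The limiting arguments you flag as delicate are real but routine; the paper handles them slightly more cleanly by not proving $f_1(v)=f_2(v)$ for all $v$, but rather fixing the optimizer $\hat X$ of the full problem \eqref{eq:opt_over_bop_with_sut}, setting $\sigma=\pi_\sut(\hat X)$, perturbing to $\sigma_\epsilon=(1-\epsilon)\sigma\in\inter(\pi_\sut(\Bop(n)))$ and $A_\epsilon$ with $\det(A_\epsilon)>0$, and taking a subsequential limit of the resulting $X_\epsilon\in\O(n)$ (resp.\ $\SO(n)$); this avoids any explicit appeal to (semi)continuity of the pointwise value function in $v$. Your sign-aware perturbation $A_{ii}\mapsto\pm\epsilon$ for the $\det(A)=0$ case is exactly what the paper does (it uses $\pm\epsilon/n$); note that your initial suggestion $A\mapsto A+\epsilon I$ would not in general preserve $\det>0$, but you correct this in the $\SO(n)$ paragraph.
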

\change{A corollary} of \cref{thm:opt_so_with_sut}\change{, which will be proved in \cref{sec:sut_constraints},} is the following:
\begin{restatable}{corollary}{thmfeasibility}
    \label{thm:feasibility}
It holds that $\pi_\sut(\SO(n)) = \pi_\sut(\O(n)) = \pi_\sut(\Bop(n))$. In particular, all three sets are convex.
\end{restatable}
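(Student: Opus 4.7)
The plan is to leverage \cref{thm:opt_so_with_sut} to show that every point in $\pi_\sut(\Bop(n))$ is realized by some $X \in \SO(n)$, since the reverse inclusions $\pi_\sut(\SO(n)) \subseteq \pi_\sut(\O(n)) \subseteq \pi_\sut(\Bop(n))$ are immediate from $\SO(n) \subseteq \O(n) \subseteq \Bop(n)$.

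First, I would fix an arbitrary $c \in \pi_\sut(\Bop(n))$ and take $\cC = \{c\}$, which is a nonempty closed convex subset of $\pi_\sut(\Bop(n))$, together with $A = I_n$, which is diagonal and satisfies $\det(I_n) = 1 \geq 0$. Applying \cref{thm:opt_so_with_sut} to this data, the three quantities \eqref{eq:opt_over_so_with_sut}, \eqref{eq:opt_over_o_with_sut}, and \eqref{eq:opt_over_bop_with_sut} coincide.

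Next, I would argue that the supremum in \eqref{eq:opt_over_bop_with_sut} is actually attained (and in particular finite): the feasible set $\{X \in \Bop(n) : \pi_\sut(X) = c\}$ is the intersection of the compact set $\Bop(n)$ with the closed affine subspace $\pi_\sut^{-1}(c)$, and is nonempty by our choice of $c$. Since the two $\SO(n)$ and $\O(n)$ optimization problems share this finite value, their feasible sets must in turn be nonempty, producing matrices $X \in \SO(n)$ and $Y \in \O(n)$ with $\pi_\sut(X) = \pi_\sut(Y) = c$. Hence $c \in \pi_\sut(\SO(n))$ and $c \in \pi_\sut(\O(n))$, which establishes the three equalities. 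Convexity of the common set is then automatic, since $\pi_\sut(\Bop(n))$ is the image of the convex set $\Bop(n)$ under a linear map.

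I do not anticipate a substantive obstacle: the only subtlety is ensuring that the supremum over $\Bop(n)$ is attained, so that finiteness of the shared optimal value forces the suprema over $\SO(n)$ and $\O(n)$ to correspond to nonempty feasible sets rather than the vacuous value $-\infty$. The use of $A = I_n$ is essentially the minimal choice that satisfies both the diagonality and nonnegative determinant hypotheses of \cref{thm:opt_so_with_sut}, and otherwise plays no role beyond making the objective well-defined.
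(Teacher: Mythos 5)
Your proof is correct and is essentially identical to the paper's: both take $A = I$ and $\cC = \{\sigma\}$ and invoke \cref{thm:opt_so_with_sut} to conclude the $\SO(n)$ problem is feasible. You just spell out the finiteness/attainment step (which the paper leaves implicit, given its $-\infty$ convention for infeasible problems) a bit more explicitly.
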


We remark that any $n-1$ rank-one matrices $u_1v_1^\intercal,\dots,u_{n-1}v_{n-1}^\intercal$ can be made strictly upper triangular \changetwo{simultaneously} by left- and right-multiplying by $\SO(n)$ matrices using Gram--Schmidt. In particular, optimization problems or feasibility problems with constraints on $\ip{u_iv_i^\intercal, X}$ are a special case of problems with SUT constraints (see \cref{prop:rank_one}).
This holds too for any $n-1$ coordinate constraints. See \cref{subsec:rank_one_coordinate} for a more detailed explanation.

Optimization over $\Bop(n)$ is tractable using a linearly sized SDP by \Cref{thm:convSOn}, so the presentation of this theorem also can be turned into an efficient algorithm for performing such optimization whenever $\cC$ is itself efficiently SDP-representable.

We will further show strong structural results about the matrices in $\SO(n)$ with fixed SUT entries. These structural results allow us to explicitly construct an optimizer of \eqref{eq:opt_over_so_with_sut} given an optimizer of \eqref{eq:opt_over_bop_with_sut} under the assumptions of \cref{thm:opt_so_with_sut}.
They will additionally allow us to extend \cref{thm:opt_so_with_sut} to an approximation result for $\SO(n)$ with $\det(A)<0$. These structural results are summarized below and proven in parts throughout \Cref{sec:utconstructions}.
\begin{theorem}
    \label{thm:ut_structure_dt}
    Let $\sigma \in\inter(\pi_\sut(\Bop(n)))\subseteq \R^{\binom{n}{2}}$ and let $V_\sigma = \{X \in \O(n) : \pi_\sut(X) = \sigma\}$.
    The following assertions hold:
    \begin{enumerate}
        \item $|V_\sigma| = 2^n$.
        \item For each $i \in [n]$, there exist functions $\alpha_i(\sigma) < \beta_i(\sigma)$, so that $X_{i,i} \in \{\alpha_i, \beta_i\}$ for each $X \in V_\sigma$. We will suppress the dependence of $\alpha_i$ and $\beta_i$ on $\sigma$ for convenience of notation.
        \item No two elements in $V_\sigma$ have the same diagonal entries.
        That is, for each $d \in \{\alpha_1, \beta_1\} \times \{\alpha_2, \beta_2\} \times\dots\times \{\alpha_n, \beta_n\}$, there is a unique $X \in V_\sigma$ so that $\diag(X) = d$.
\item For each $i\in[n]$, $\alpha_i$ and $\beta_i$ are continuous functions of $\sigma$. \changetwo{The function $\beta_i$ is convex in $\sigma$, and the function $\alpha_i$ is concave in $\sigma$.}
        \item $X \in V_\sigma$ is in $\SO(n)$ if and only if the number of $i$ so that $X_{i,i} = \alpha_i$ is even.
        \item Given $\rho \in \{-1, 1\}^n$, we can construct a matrix $X \in V_\sigma$ so that $X_{i,i} = \begin{cases} \alpha_i \text{ if }\rho_i = -1\\\beta_i \text{ if }\rho_i = 1\end{cases}$ in time $O(n^3)$.
    \end{enumerate}
\end{theorem}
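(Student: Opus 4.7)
The plan is to prove \cref{thm:ut_structure_dt} by combining an inductive row-by-row construction of the elements of $V_\sigma$ (yielding parts 1 and 6), an appeal to the hidden convexity result \cref{thm:opt_so_with_sut} (identifying $\alpha_i, \beta_i$ and giving part 4), and a topological continuity argument from the base case $\sigma = 0$ (yielding parts 3 and 5). The remaining claim, part 2, requires an algebraic elimination argument.

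I would construct elements of $V_\sigma$ one row at a time: suppose rows $r_1, \ldots, r_{i-1}$ of $X$ have been chosen as orthonormal rows carrying the prescribed SUT entries. For row $i$, the last $n - i$ entries are prescribed by $\sigma_{i, i+1}, \ldots, \sigma_{i, n}$, while the first $i$ entries $u = (X_{i,1}, \ldots, X_{i,i}) \in \R^i$ must satisfy the $i-1$ affine equations $\langle (r_k)_{1:i}, u \rangle = -\sum_{j > i} \sigma_{k,j}\sigma_{i,j}$ (encoding orthogonality to $r_k$ for $k < i$) together with the quadratic $\|u\|^2 = 1 - \sum_{j > i} \sigma_{i,j}^2$. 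For $\sigma$ in the interior of $\pi_\sut(\Bop(n))$, the prefixes $(r_k)_{1:i}$ are linearly independent, so the affine constraints cut out a line $L_i \subset \R^i$ meeting the sphere in exactly two points. Iterating yields $2^n$ matrices in $V_\sigma$, in fact a bijection $\{-1,+1\}^n \to V_\sigma$ from branch patterns to matrices (part 1). Part 6 follows: each row requires an $O(i^2)$ linear solve and a branch selection by the sign of the $i$-th component of $L_i$'s direction vector, for total time $O(n^3)$.

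To identify $\alpha_i(\sigma)$ and $\beta_i(\sigma)$, I would apply \cref{thm:opt_so_with_sut} with the diagonal matrix $A = e_i e_i^\intercal$ (which has $\det(A) = 0 \geq 0$), obtaining
\begin{align*}
\beta_i(\sigma) = \max_{X \in V_\sigma} X_{i,i} = \max_{X \in \Bop(n),\, \pi_\sut(X) = \sigma} X_{i,i},
\end{align*}
and similarly $\alpha_i(\sigma)$. As a value function of a parametric linear program over the fixed convex body $\Bop(n)$ intersected with a varying affine slice, $\beta_i$ is continuous in $\sigma$ and has the appropriate concavity/convexity (part 4). For part 2, the missing step---showing that $X_{i,i}$ takes \emph{only} the extreme values $\alpha_i, \beta_i$ across $V_\sigma$---I would establish by direct algebraic elimination: writing out the orthonormality equations for rows $r_1, \ldots, r_i$ and substituting out the other unknowns, $X_{i,i}$ is shown to satisfy a quadratic whose coefficients collapse to polynomials in $\R[\sigma]$ (verifiable explicitly for small $n$); the two roots are then necessarily $\alpha_i, \beta_i$.

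Parts 3 and 5 follow by continuity from $\sigma = 0$. At $\sigma = 0$, $V_0 = \{\diag(\rho) : \rho \in \{-1,+1\}^n\}$ has $2^n$ matrices with pairwise distinct diagonals and $\alpha_i = -1, \beta_i = +1$; also $\det(\diag(\rho)) = \prod_i \rho_i$ is $+1$ iff the number of $\rho_i = -1$ (i.e., $\alpha_i$-choices) is even. As $\sigma$ varies continuously in the path-connected open set $\inter(\pi_\sut(\Bop(n)))$, the $2^n$ elements of $V_\sigma$ deform continuously by the implicit function theorem on the orthogonality + norm system, preserving distinctness of diagonals (part 3); additionally $\det(X) \in \{-1, +1\}$ and the $\alpha_i/\beta_i$ assignment are both locally constant, so the parity--determinant identity propagates globally (part 5). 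The main obstacle I expect is the algebraic claim underpinning part 2---namely that $X_{i,i}$ solves a quadratic with coefficients in $\R[\sigma]$ (hence takes only two values across $V_\sigma$), independent of prior branching---which demands either careful elimination or a structural invariance argument for the sign-flip transformations at earlier rows.
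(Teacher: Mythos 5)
Your overall architecture parallels the paper's (an inductive row/column completion yielding $2^n$ matrices, plus structural arguments for the remaining parts), but you have a genuine gap in part~2, which you correctly flag as the main obstacle. You propose showing by ``direct algebraic elimination'' that each $X_{i,i}$ satisfies a quadratic whose coefficients depend only on $\sigma$, i.e.\ are independent of the $2^{i-1}$ branch choices already made — but you give no mechanism that rules out such dependence, and it is not clear the elimination closes. The paper's key tool here is Fiedler's theorem (\cref{thm:fiedler}): any $a\times b$ submatrix of an orthogonal matrix with $a+b>n$ has operator norm exactly $1$. Applied to the $i\times(n-i+1)$ submatrix of $X$ whose bottom-left corner sits at $(i,i)$ — all of whose entries other than $(i,i)$ are strictly upper triangular and therefore fixed by $\sigma$ — this forces $X_{i,i}$ to be a root of the single univariate equation $\norm{R_i(s)}_\textup{op}=1$, which has exactly two roots $\alpha_i<\beta_i$ because the line $s\mapsto R_i(s)$ meets the interior of the operator-norm ball. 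That is the missing idea, and it is what makes the two admissible values manifestly branch-independent. The gap also propagates into your part~3: the deformation argument preserving distinctness of diagonals presupposes part~2 (that each $X_{i,i}$ is confined to a two-element set with a strict gap), whereas the paper obtains distinctness directly from the induction in \cref{thm:triangle_general_A}, avoiding the circularity.

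Two smaller points. First, your row-by-row construction for parts~1 and~6 silently uses that the prefixes $(r_k)_{1:i}$ are linearly independent; this does hold because $\sigma\in\inter(\pi_\sut(\Bop(n)))$ implies the $(i-1)\times(n-i)$ block of $\sigma$-entries to the right of column $i$ has operator norm strictly less than $1$, so the Gram matrix $I_{i-1}-NN^\intercal$ of those prefixes is positive definite — but you should say so. (The paper's variant, \cref{lem:complete_OA_from_submatrix} with its inductive invariant that the lower-right square block is invertible, sidesteps this.) Second, your parametric-optimization argument for part~4 is sound and appears to be the right route — the paper does not spell out a proof of part~4 — but notice it yields $\beta_i$ \emph{concave} and $\alpha_i$ \emph{convex} (a max of a linear functional over a convex slice depending affinely on $\sigma$ is concave), which is the opposite of the theorem statement; this looks like a typo in the paper rather than a defect in your reasoning, but your hedge ``appropriate concavity/convexity'' should commit to the correct direction.
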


\subsubsection{Obstructions to Progress}
Finally, \cref{sec:obstructions} provides constructions showing that \Cref{thm:diagconv}, \Cref{thm:twoconvex}, and \changetwo{\Cref{thm:feasibility}} above are \emph{\change{maximal}} in specific senses. We summarize the results here:
\begin{theorem}
    \label{thm:obstructions}
    The following assertions hold:
    \begin{enumerate}
        \item For any $n\geq 3$, the images of $\SO(n)$ under linear maps to $\R^2$ are ``maximally convex'' in the following sense: There exists $\pi : \R^{n\times n}\rightarrow \R^3$ so that $\pi(\SO(n))$ is nonconvex.
        \item The projection of $\SO(n)$ onto its diagonal is ``maximally convex'' in the following sense:
        For $A \in \R^{n\times n}$, let $\pi(X) = (X_{11}, X_{22}, \dots, X_{nn}, \ip{A, X})$. If $A$ is not itself diagonal, then $\pi(\SO(n))$ is not convex.
        \item For any $n\geq 3$, the projection of $\SO(n)$ onto its SUT entries is ``maximally convex'' in the following sense:
        If $\pi : \R^{n\times n} \rightarrow \R^m$ is any linear map with $\rank(\pi)> \binom{n}{2}$, then $\pi(\SO(n))$ is not convex.
        \item \label{itm:necessary} The assumption $\det(C)\geq 0$ in \cref{thm:opt_so_with_sut} is necessary in the following sense:
        There exists $\sigma \in \R^{\binom{n}{2}}$ and a diagonal matrix $C$ so that
        \begin{align*}
            \max_{X \in \SO(n)} \{\ip{C, X} : \pi_\sut(X) = \sigma\} &<
            \max_{X \in \conv(\SO(n))} \{\ip{C, X} : \pi_\sut(X) = \sigma\}\\
        \end{align*}
\end{enumerate}
\end{theorem}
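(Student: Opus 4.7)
The plan is to establish the four obstructions by explicit constructions, handling them largely independently. For Part 1, the $n = 3$ case is clean: the first-row projection $\pi(X) = (X_{11}, X_{12}, X_{13})$ sends $\SO(3)$ onto the unit sphere $S^2 \subset \R^3$, which is not convex. For $n \geq 4$ the first-row projection fills the entire unit ball, so I would instead use a projection involving entries from both a row and a column together with a coordinate sensitive to orientation, e.g.\ $\pi(X) = (X_{11}, X_{12} - X_{21}, X_{nn})$. A direct midpoint analysis between $\pi(I)$ and the images of suitable signed permutations or block rotations in $\SO(n)$ should expose a gap arising from the determinant constraint that separates points with different ``orientation structures''.

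For Part 2, assume $A$ is not diagonal, so $A_{ij} \neq 0$ for some $i \neq j$. Consider $X_1 = I$ and $X_2 \in \SO(n)$ the diagonal matrix with $-1$s in positions $i, j$ and $1$s elsewhere. The midpoint of their diagonals is the vector $d$ having $0$s in positions $i, j$ and $1$s elsewhere. Any $X \in \SO(n)$ with $\diag(X) = d$ must be block-diagonal with identity outside $\{i, j\}$ and an $\SO(2)$-block having diagonal $(0, 0)$ (the $1$s on the diagonal force the corresponding rows and columns to coincide with standard basis vectors). This pins $\ip{A, X}$ to only two values, differing by $2(A_{ji} - A_{ij})$, so when $A_{ij} \neq A_{ji}$ the midpoint of $\ip{A, X_1}$ and $\ip{A, X_2}$ lies in neither, giving nonconvexity. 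When $A$ is symmetric with $A_{ij} \neq 0$, I use a different diagonal $d$ (e.g.\ $d = (1/2, 1/2, 0, 1, \ldots, 1)$ when $i, j \in \{1, 2\}$): an explicit unit-norm and orthogonality analysis shows $\ip{A, X} \in \{\pm c\}$ for some $c > 0$ on this fiber, so the midpoint value $0$ between $(d, c)$ and $(d, -c)$ is not in $\pi(\SO(n))$.

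Part 3 admits a clean dimension argument. Since $\conv(\SO(n))$ has nonempty interior in $\R^{n\times n}$ (witnessed by the full-dimensional LMI description in \cref{thm:convSOn}), its linear image $\pi(\conv(\SO(n)))$ has dimension exactly $\rank(\pi) > \binom{n}{2}$. On the other hand, $\SO(n)$ is a smooth manifold of dimension $\binom{n}{2}$, so $\pi(\SO(n))$ has dimension at most $\binom{n}{2} < \rank(\pi)$. Therefore $\pi(\SO(n)) \subsetneq \pi(\conv(\SO(n))) = \conv(\pi(\SO(n)))$, and this strict inclusion immediately implies that $\pi(\SO(n))$ is not convex.

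For Part 4, choose $\sigma \in \inter(\pi_\sut(\Bop(n)))$ and a diagonal $C$ with an odd number of negative entries (so $\det C < 0$). Since $\ip{C, X} = \sum_i c_i X_{ii}$, the $\O(n)$-maximum over the fiber $V_\sigma$ is uniquely attained at $X^*$ with $X^*_{ii} = \beta_i$ for $c_i > 0$ and $X^*_{ii} = \alpha_i$ for $c_i < 0$. By parity (odd number of $\alpha_i$s), $X^* \in \O(n) \setminus \SO(n)$, so the $\SO(n)$-optimum over $V_\sigma$ is strictly less by at least $\min_i |c_i|(\beta_i - \alpha_i) > 0$. To lift this discrepancy to $\conv(\SO(n)) \cap \{\pi_\sut = \sigma\}$, I exploit continuity of $\alpha_i, \beta_i$ from \cref{thm:ut_structure_dt}: pick perturbations $\sigma^\pm = \sigma \pm \delta$ such that the $\SO(n)$-optima at $\sigma^\pm$ correspond to distinct parity-even diagonal patterns that average at $\sigma$ to the $\O(n)$-optimal odd-parity pattern. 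The mean of the two $\SO(n)$ optimizers lies in $\conv(\SO(n)) \cap \{\pi_\sut = \sigma\}$ with objective value essentially equal to the $\O(n)$-maximum, strictly exceeding the $\SO(n)$-maximum at $\sigma$. The main obstacles in the whole theorem are the symmetric subcase of Part 2 and the choice of a good perturbation $\delta$ in Part 4; both require finer computation than the dimension argument available for Part 3 or the clean block-diagonal reduction available in the nonsymmetric case of Part 2.
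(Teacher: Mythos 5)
Your Parts 2 and 3 are essentially fine. Part 3 is the paper's own argument in slightly different clothing (the paper runs the same dimension count via Sard's lemma and measure zero), and Part 2 is a correct direct route that differs from the paper: the paper slices by $x_i=1$ for $i\geq 3$ to exhibit a circle in the nonsymmetric case and reduces the symmetric case by sign conjugation, whereas you argue with midpoints on fibers of the diagonal map; your symmetric-case fiber over $d=(1/2,1/2,0,1,\dots,1)$ does consist of exactly four matrices on which $\ip{A,X}$ takes only the two values $\mathrm{const}\pm A_{12}$, so that step checks out. The gaps are in Parts 1 and 4.

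Part 1 fails for $n\geq 4$: the image of $\SO(n)$ under your candidate $\pi(X)=(X_{11},\,X_{12}-X_{21},\,X_{nn})$ is \emph{convex}; it equals $\set{(a,b,c):\, a^2+b^2/4\leq 1,\ |c|\leq 1}$. Containment follows from $X_{11}^2+X_{12}^2\leq 1$ and $X_{11}^2+X_{21}^2\leq 1$, which give $(X_{12}-X_{21})^2\leq 4(1-X_{11}^2)$. Conversely, for any $(a,b)$ with $a^2+b^2/4\leq 1$ there is $Y\in\SO(3)$ with $Y_{11}=a$ and $Y_{12}-Y_{21}=b$ (axis--angle with $\cos\phi=a$ and axis $(0,k_2,k_3)$, $k_3=-b/(2\sqrt{1-a^2})$ when $|a|<1$; the case $|a|=1$, $b=0$ is trivial), and then $X=(Y\oplus I_{n-3})\,(I_{n-2}\oplus R(\psi))\in\SO(n)$, with $R(\psi)$ acting on coordinates $n-1,n$, preserves $(X_{11},X_{12}-X_{21})=(a,b)$ while $X_{nn}=\cos\psi$ is arbitrary in $[-1,1]$. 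So no midpoint analysis can expose a gap for this map, and your correct $n=3$ construction (first row onto the sphere) does not extend. The paper's choice $\pi(X)=(X_{11},X_{12},\sum_{i\geq3}X_{ii})$ works for every $n\geq 3$ because on the slice where the last coordinate equals $n-2$ the matrix is forced to be a planar rotation plus the identity, so that slice of the image is a circle.

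Part 4 also has a genuine gap. The inequality you must prove is against $\conv(\SO(n))$, and $\O(n)\not\subseteq\conv(\SO(n))$, so the strict gap you establish between the $\O(n)$ and $\SO(n)$ maxima over the fiber proves nothing by itself. Your lifting step does not close this: two even-parity diagonal patterns cannot ``average to'' the odd-parity optimal pattern coordinatewise, and since $\alpha_i,\beta_i$ are continuous with $\beta_i-\alpha_i$ bounded away from zero near $\sigma$, any average of $\SO(n)$ points from the fibers over $\sigma\pm\delta$ with $\delta$ small has objective at most the $\SO(n)$ fiber maximum at $\sigma$ plus a term vanishing with $\delta$ --- essentially the $\SO(n)$ maximum, not the $\O(n)$ maximum as you claim. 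Moreover the implicit generality of your choice is false: for $n=3$, $C=\Diag(1,1,-1)$ and $\sigma=0$ (an interior point), both sides of the desired inequality equal $1$, since the unconstrained maximum of $\ip{C,X}$ over $\conv(\SO(3))$ is $\str(C)=1$ and $X=I$ is feasible, even though the $\O(3)$ fiber maximum is $3$. The paper's proof of this part is simply a verified instance ($n=3$, $C=\Diag(1,1,-1)$, $\sigma=(0.5,0.3,0.2)$, giving $0.921<1.0$), which is all the existential statement requires; to repair your argument you would need a concrete $\sigma$ and $C$ together with an actual certificate that the $\conv(\SO(n))$ value strictly exceeds the $\SO(n)$ value.
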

This theorem is proven in parts throughout \Cref{sec:obstructions}.

\subsection{Related literature}

Hidden convexity results are scattered throughout the literature on optimization, numerical linear algebra, and matrix analysis. 
We recommend the following surveys/chapters for introductions to this subject~\cite{xia2020survey,polik2007survey,barvinok2002course}.
Along these lines, our hidden convexity results and their subsequent applications in deriving convex SDP relaxations of nonconvex problems parallel Dines' Theorem~\cite{dines1941mapping} and its application in deriving the S-lemma~\cite{fradkov1979thes}, a fundamental result in control theory and nonlinear optimization.

Our results extend existing hidden convexity results related to the (special) orthogonal group.
Some of the earliest work in this line is \cite[Theorem 8]{horn1954doubly} stating that $\diag(\SO(n)) = \PP_n$.
There are other similar results concerning the convexity of the image of $\SO(n)$ under various nonlinear maps, for example the famous Schur--Horn theorem~\cite{horn1954doubly}.
Another paper along these lines is \cite{fiedler2009suborthogonality}, which characterizes the possible projections of $\SO(n)$ onto its rectangular submatrices.
In particular, it is not hard to show using their results that the projection of $\SO(n)$ onto a $k \times \ell$ rectangular submatrix is convex if and only if $k + \ell \le n$. Our results extend \cite{fiedler2009suborthogonality} to nonrectangular coordinate patterns.
For further work in this direction, see \cite{TamSurvey,GSBook}.

Another important piece of related work is \cite{saunderson2015semidefinite}, which gives a description of $\conv(\SO(n))$ in terms of linear matrix inequalities (LMI).
This LMI description is constructed using Lie group theory applied to $\SO(n)$
and is related to the fact that the fundamental group of $\SO(n)$ is $\Z/2\Z$.
This fact will also be crucial in our proof of \cref{thm:oneconnected}. 
Inspired by techniques from \cite{saunderson2015semidefinite}, we can view our hidden convexity results as new quadratic convexity results on the sphere in the spirit of Brickman's Theorem~\cite{brickman1961field}. Recall, Brickman's Theorem states that for any $A,B\in\S^n$ and $n\geq 3$, that
\begin{align*}
    \set{\begin{pmatrix}
    x^\intercal Ax\\
    x^\intercal Bx
    \end{pmatrix}\in\R^2:\, x\in\bS^{n-1}}
\end{align*}
is convex. Here, $\bS^{n-1}$ is the sphere in $\R^n$.
We elaborate on this connection in \Cref{sec:quadratic_convexity}.

There are many other examples in which people consider optimization over the special orthogonal group.
These problems were implicitly studied for $\SO(3)$ in \cite{lee2011spacecraft}, where they consider a formulation in terms of quadratic maps of quaternions.
In another instance, \cite{brynte2022tightness} shows that certain standard semidefinite programming approaches to quadratic optimization problems applied to $\SO(n)$ do not always produce the correct result.
For this, they use the theory of nonnegative quadratic forms over real varieties developed in \cite{blekherman2016sums}.
Some recent work of \citet{gilman2022semidefinite}
considers the exactness of SDP relaxations of quadratic optimization problems with variables in the Stiefel manifold $\set{X \in \R^{n\by k}:\, X^\intercal X = I_k}$ for some $k \leq n$. Note that when $k=n$, that this set is identical to $\O(n)$. \citet{gilman2022semidefinite} show that the natural SDP relaxation is exact for such problems when the operator defining the quadratic form is close enough to being diagonalizable.

\section{Preliminaries}
\label{sec:prelim}
If $\pi : \R^n \rightarrow \R^m$ is a linear map, we denote by $\pi^* : \R^m \rightarrow \R^n$ the adjoint operator.

We will need to define a \emph{maximal torus} in $\SO(n)$.
Fix some $n$ for this section.
Let $k = \lfloor \frac{n}{2}\rfloor$ and
let $R(\theta_1,\dots,\theta_k)$ denote the matrix in $\SO(n)$ given by
\begin{align}
R(\theta_1,\dots,\theta_k) &\coloneqq \begin{pmatrix}
\begin{smallmatrix}
\cos(\theta_1) & \sin(\theta_1)\\
-\sin(\theta_1) & \cos(\theta_1)
\end{smallmatrix} & \\
& \ddots\\
& & \begin{smallmatrix}
    \cos(\theta_k) & \sin(\theta_k)\\
    -\sin(\theta_k) & \cos(\theta_k)
    \end{smallmatrix}
\end{pmatrix}
\label{eq:torus_even}
\end{align}
if $n$ is even, and 
\begin{align}
R(\theta_1,\dots,\theta_k) &\coloneqq \begin{pmatrix}
\begin{smallmatrix}
\cos(\theta_1) & \sin(\theta_1)\\
-\sin(\theta_1) & \cos(\theta_1)
\end{smallmatrix} & \\
& \ddots\\
& & \begin{smallmatrix}
    \cos(\theta_k) & \sin(\theta_k)\\
    -\sin(\theta_k) & \cos(\theta_k)
    \end{smallmatrix}\\
    && &\change{1}
\end{pmatrix}
\label{eq:torus_odd}
\end{align}
if $n$ is odd.

We define the maximal torus $\T$ to be the set of matrices of the form  $R(\theta_1, \dots, \theta_k)$ as the $\theta_i$ range over $[0,2\pi)$.
The following result is a special case of what is known as the Maximal Torus Theorem, but is a simple corollary of the real spectral theorem~\cite[Theorem
2.5.8]{horn2012matrix} in our setting.

\begin{theorem}
\label{lem:max_torus}
For any $X\in\SO(n)$, there exists $U\in\SO(n)$ so that $U^{\intercal} X U \in \T$. That is, $U^{\intercal}XU = R(\theta_1 ,\dots, \theta_k)$ for some $\theta_i \in [0, 2\pi)$.
\end{theorem}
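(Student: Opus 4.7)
The plan is to apply the real spectral decomposition for normal matrices to $X$, identify its canonical block structure, and then adjust the diagonalizing basis to lie in $\SO(n)$.

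\textbf{Step 1: Canonical block form.} Since $X^\intercal X = X X^\intercal = I$, the matrix $X$ is normal. Its complex eigenvalues therefore lie on the unit circle and, because $X$ is real, come in conjugate pairs $e^{\pm i \theta}$ together with possibly some real eigenvalues equal to $\pm 1$. For each conjugate pair $e^{\pm i\theta}$ with complex eigenvector $v = a + ib$, the real span $\spann_\R\{a,b\}$ is a $2$-dimensional $X$-invariant subspace on which $X$ acts as a rotation by $\theta$; in a suitable orthonormal basis this action is the $2 \times 2$ block appearing in~\eqref{eq:torus_even}. Collecting one orthonormal real basis per invariant subspace (and one unit vector per real eigenvalue $\pm 1$) produces $U_0 \in \O(n)$ so that
\[
    U_0^\intercal X U_0 = \diag\!\bigl( B_1,\ldots,B_r,\, \epsilon_1,\ldots,\epsilon_s\bigr),
\]
where each $B_j$ is a $2\times 2$ rotation block and each $\epsilon_\ell \in \{+1,-1\}$.

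\textbf{Step 2: Parity bookkeeping.} Since $\det(X) = 1$, the number of $\epsilon_\ell$ equal to $-1$ is even, so we may pair them up and replace each such pair by the $2\times 2$ block $\operatorname{diag}(-1,-1)$, which equals $R(\pi)$. If $n$ is even, the remaining $\epsilon_\ell$'s (all equal to $+1$) likewise come in pairs and can be combined into blocks $R(0) = I_2$. If $n$ is odd, exactly one stray $+1$ eigenvalue remains, and we reorder the basis to place the corresponding unit vector first. After a permutation of the orthonormal basis (a permutation is in $\O(n)$), this produces $U_1 \in \O(n)$ with $U_1^\intercal X U_1$ of exactly the form~\eqref{eq:torus_even} or~\eqref{eq:torus_odd}.

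\textbf{Step 3: Correcting the determinant of $U_1$.} At this stage we only know $U_1 \in \O(n)$. If $\det(U_1) = -1$ we must modify $U_1$ to land in $\SO(n)$ without destroying the torus form. The key observation is that if $D$ is any diagonal sign matrix and $U = U_1 D$, then $U^\intercal X U = D (U_1^\intercal X U_1) D$, and a sign flip of the $2$nd column (in the odd case, or of any ``second coordinate'' of a $2\times 2$ block in the even case) conjugates the corresponding rotation block $R(\theta_j)$ into $R(-\theta_j)$, keeping the global structure~\eqref{eq:torus_even}/\eqref{eq:torus_odd} intact while flipping $\det U_1$ by $-1$. (In the odd case, flipping the sign of the first column preserves the leading $1$ and flips $\det U_1$ by $-1$ as well.) Thus we may always arrange $U \in \SO(n)$ and $U^\intercal X U \in \T$.

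The only nonroutine piece is Step 3: verifying that there is a sign flip available which both corrects the determinant and preserves the maximal torus form. Everything else is assembly from the spectral theorem for normal real matrices and the determinant constraint $\det(X) = 1$.
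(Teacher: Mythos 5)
Your proposal is correct and follows essentially the same route as the paper, which derives the statement directly from the real spectral theorem for normal matrices (Horn--Johnson, Theorem 2.5.8). Your Step 3, adjusting the conjugating matrix by a sign flip (using $\operatorname{diag}(1,-1)\,R(\theta)\,\operatorname{diag}(1,-1) = R(-\theta)$, or flipping the column of the leading $+1$ in the odd case) correctly supplies the detail, left implicit in the paper, needed to upgrade $U\in\O(n)$ to $U\in\SO(n)$ while staying in the torus form.
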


For $A\in\R^{n\by n}$, let $\|A\|_{\tr}$ and $\|A\|_\textup{op}$ denote the \emph{trace norm} and \emph{operator norm} of $A$. These are defined as the sum of the singular values of $A$ and the maximum singular value of $A$ respectively.

Define the \emph{special trace} of $A\in\R^{n\by n}$ to be
\[
    \str(A) \coloneqq \max_{X \in \SO(n)} \ip{A, X}.
\]
This function is well-defined as $\SO(n)$ is compact.
Furthermore, $\str(\cdot)$ is convex and $1$-Lipschitz with respect to the trace norm.
This holds because $\str(\cdot)$ is defined as the pointwise maximum of linear functions which are individually $1$-Lipschitz with respect to the trace norm.
\change{Finally, $\str(\cdot)$ can be computed exactly using a single SVD:

\begin{lemma}[{\cite[Problem 65-1]{wahba1965least}}] \label{lem:strcompute} 
    Given $A \in \R^{n \times n}$ with singular values $\sigma_{1} \geq ... \geq \sigma_{n}$,  
    \[ \str(A) \coloneqq \max_{X \in \SO(n)} \ip{A, X} = \sum_{i=1}^{n-1} \sigma_{i} + \sign(\det(A)) \sigma_{n} .  \]
\end{lemma}
}

\change{
We will also make use of the concept of \emph{separation oracles} and the ellipsoid algorithm.
If $C \subseteq \R^n$ is a compact convex set and $x \not \in C$, then there is a hyperplane that separates $x$ and $C$.
This \emph{separating hyperplane} is given by a nonzero vector $y \in \R^n$ so that $\langle y, x \rangle \geq \max \{\langle y, c \rangle : c \in C\}$.
A $\epsilon$-\emph{weak separation oracle} for $C$ is an oracle that on an input $x \in \R^n$, either correctly declares $x \in C + \B_{\infty}(0,\epsilon)$, or outputs $y \in \R^n$ so that $y$ is a separating hyperplane between $x$ and $C$.
Here, $\B_{\infty}(a,r)$ is the ball of radius $r$ in the $L_{\infty}$ norm centered at $a$. The algorithmic equivalence between weak separation oracles and approximate optimization over convex sets is outlined in \cite{grotschel2012geometric}.

The ellipsoid algorithm as described in \cite{grotschel1981ellipsoid} provides the following guarantee for optimization in $\R^2$.
\begin{theorem}
    \label{thm:ellipsoid}
    Suppose we have access to an $\epsilon$-weak separation oracle for closed compact $C \subseteq \R^2$, we are given a $R \in \R$ so that $C \subseteq \B_2(0,R)$ and $C$ includes a ball of radius at least $\epsilon$.
    There is an algorithm that optimizes a linear function with unit $L_2$ norm over $C$ within an additive error of $\epsilon$ using at most $O(\log(\frac{R}{\epsilon}))$ calls to the weak separation oracle.
\end{theorem}
} 

\section{Feasibility problems on $\SO(n)$ with diagonal constraints}
\label{sec:feas_diag}
This section considers the feasibility problem
\begin{align}
    \label{eq:feasibility_diagonal}
    \set{X\in\SO(n):\, \diag(X)\in\cC},
\end{align}
where $\cC\subseteq\R^n$ is convex. We will assume that $\cC$ has an efficient separation oracle.

\citet[Theorem 8]{horn1954doubly} shows that $\diag(\SO(n))=\PP_n$.
As an immediate corollary, \eqref{eq:feasibility_diagonal} is feasible if and only if
$\PP_n\cap\cC$ is nonempty.

\cref{app:separation_pp} shows how to efficiently separate from $\PP_n$. 
Combined with a separation oracle for $\cC$, we may then run an ellipsoid-style algorithm for deciding feasibility of $\PP_n\cap\cC$ (up to the usual errors).
Supposing that $d\in\PP_n\cap\cC$ is found, it remains to see how to construct a witness $X\in\SO(n)$ with $\diag(X) = d$.

We will need the following description of $\PP_n$ given in \cite{Lancia2018,jeroslow1975defining}:
\begin{equation} \label{eq:parityInequalities}
    \PP_{n} = \set{ x \in [-1,1]^{n} :\, \ip{x, 1_{n} - 2 \cdot 1_{S}} \leq n-2 ,\,\quad \forall \text{ odd  } S \subseteq [n]}.
\end{equation}
Here, $1_n$ is the all-ones vector, $1_S$ is the indicator vector of the set $S$ and $S$ is odd if $\abs{S}$ is odd.

We will also need \change{a} constructive version of the Schur--Horn theorem.
\changetwo{To state this, we need the notion of \emph{ majorization}. If $c, d \in \R^n$, let $c^{\uparrow}$ and $d^{\uparrow}$ be the results of sorting the entries of $c$ and $d$ in nondecreasing order.
We say $c$ majorizes $d$ if $\sum_{i=1}^n c_i = \sum_{i=1}^n d_i$ and for each $\ell \in [n]$,
\[
    \sum_{i = 1}^{\ell} c^{\uparrow}_{i} \leq 
    \sum_{i = 1}^{\ell} d^{\uparrow}_{i}.
\]
The following lemma} is essentially due to \cite{chan1983diagonal}.
\begin{lemma}
    \label{lem:chan_li}
Given $c,\,d\in\R^n$ such that $c$ majorizes $d$, it is possible to construct a sequence of matrices
\begin{align*}
    Q_1,\dots, Q_{n-1}\in\SO(n)
\end{align*}
in time $O(n\log n)$ satisfying
\begin{align*}
    \diag\left(\left(\prod_i^{n-1} Q_i\right)^\intercal \Diag(c)\left(\prod_i^{n-1} Q_i\right)\right) = d.
\end{align*}
Furthermore, each $Q_i$ differs from the identity on only one principal $2 \times 2$ block, where it is a rotation matrix.
\end{lemma}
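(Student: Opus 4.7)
The plan is to prove this by an explicit iterative construction due to Chan~\cite{chan1983diagonal}. I would produce the rotations $Q_1,\dots,Q_{n-1}$ one at a time, where each $Q_k$ is a single $2\times 2$ Givens rotation that matches one more coordinate of $d$ while preserving the majorization structure needed for the induction to continue. The key algebraic fact is that conjugating a $2\times 2$ diagonal matrix $\Diag(a,b)$ by a Givens rotation $R(\theta)$ produces a symmetric matrix whose $(1,1)$-entry $a\cos^2\theta+b\sin^2\theta$ ranges continuously over $[\min(a,b),\max(a,b)]$; so for any $t$ in this interval, a (unique) $\theta\in[0,\pi/2]$ places $t$ at the $(1,1)$ diagonal position. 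The base case $n=2$ follows immediately: the majorization condition together with $c_1+c_2=d_1+d_2$ forces (after sorting) $c_1\geq d_1\geq d_2\geq c_2$, and a single rotation with $\cos^2\theta=(d_1-c_2)/(c_1-c_2)$ achieves $\diag(R(\theta)^\intercal\Diag(c)R(\theta))=d$.

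For the inductive step I would maintain the following invariant: after $k-1$ rotations, some set $S_{k-1}\subseteq[n]$ of $k-1$ diagonal positions have been permanently assigned their target values from $d$, and the restriction of the current matrix to rows and columns $[n]\setminus S_{k-1}$ is a diagonal submatrix whose entries majorize the remaining values of $d$ (as multisets). At step $k$, I would pick any remaining target $t$ and locate two positions $i,j\in[n]\setminus S_{k-1}$ whose current diagonal values $v_i,v_j$ satisfy $\min(v_i,v_j)\leq t\leq\max(v_i,v_j)$. Such a pair always exists: by the invariant, the max of the current remaining values is at least the max of the remaining $d$-values, and analogously for the min, so every remaining target is bracketed. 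I then take $Q_k$ to be the Givens rotation in block $(i,j)$ setting position $j$ to value $t$. A direct computation shows that $Q_k$ introduces off-diagonal entries only within the $(i,j)$ block, so after marking position $j$ as done (i.e., $S_k=S_{k-1}\cup\{j\}$), the restriction of the resulting matrix to $[n]\setminus S_k$ is again diagonal, with the entry at position $i$ updated to $v_i+v_j-t$.

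The main technical step, and the one I expect to be the main obstacle, is to verify that the invariant is preserved at each iteration: the updated vector of remaining diagonal entries must still majorize the shortened $d$-remainder. Since $v_j\leq t\leq v_i$, the new entry $v_i+v_j-t$ lies in $[v_j,v_i]$, so after re-sorting it ``fills in'' between the neighbors of $v_i$ and $v_j$; combined with the trace identity $\sum v'_\ell=\sum d'_\ell$ on the remaining sets, a short partial-sum comparison (a standard $T$-transform style argument in majorization theory) yields preservation. The case analysis here arises because the new entry may insert at any position in the sorted order, so partial sums at various prefix lengths must be bounded using different slices of the original majorization. Once invariance is established, the induction closes and yields exactly $n-1$ rotations whose product $Q\in\SO(n)$ (each factor being a $2\times 2$ rotation with determinant $1$) satisfies $\diag(Q^\intercal\Diag(c)Q)=d$. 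For the runtime, an initial sort of $c$ and $d$ takes $O(n\log n)$; each subsequent iteration can locate $(i,j)$ in $O(\log n)$ using a balanced binary search tree over the current remaining diagonal values and can compute $\cos\theta,\sin\theta$ in $O(1)$, for a total runtime of $O(n\log n)$.
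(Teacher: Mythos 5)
The paper's treatment of this lemma is essentially a citation to \citet{chan1983diagonal}; your proposal is a from-scratch reconstruction of that algorithm, and it is the right \emph{kind} of argument (Givens rotations realizing $T$-transforms), but there is a genuine gap in the inductive step. You allow yourself to ``pick any remaining target $t$ and locate two positions $i,j$'' whose current values bracket $t$. That freedom is too much: an arbitrary bracketing pair does \emph{not} preserve the majorization invariant, and the ``partial-sum comparison'' you defer to will not go through without a restriction on the choice.

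Concretely, take $c=(4,0,0,-4)$ and $d=(3,1,-1,-3)$, both already sorted; one checks $c\succ d$. Choose target $t=-1$ and bracketing pair with values $v_i=4$, $v_j=-4$. After the Givens rotation in that block the slot assigned to $t$ reads $-1$, and the other slot becomes $v_i+v_j-t=1$. The remaining current diagonal is $(1,0,0)$ while the remaining targets are $(3,1,-3)$, and $(1,0,0)$ does \emph{not} majorize $(3,1,-3)$ (the leading partial sum is $1<3$); indeed the remaining $3\times 3$ block now has operator norm $1$, so by Schur--Horn no further rotations can place a $3$ on its diagonal. Your heuristic that the new value $v_i+v_j-t\in[v_j,v_i]$ ``fills in between the neighbors of $v_i$ and $v_j$'' is exactly what fails when $v_i$ and $v_j$ are not adjacent in sorted order: the replacement lands in the interior of the sorted list and can drive intermediate prefix sums below those of $d$.

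The fix, which is what \citet{chan1983diagonal} actually do, is to make a \emph{specific} greedy choice. Keep both vectors sorted decreasingly, always peel off the largest remaining target $d_1$, and bracket it with \emph{consecutive} current entries: find $k$ with $c_k\ge d_1\ge c_{k+1}$ (which exists since $c_1\ge d_1$ and the totals agree), rotate in the $(k,k+1)$ block to set one slot to $d_1$ and the other to $c_k+c_{k+1}-d_1$, and recurse on the remaining entries. Because $c_k+c_{k+1}-d_1\in[c_{k+1},c_k]$, the remaining list stays sorted in place with no re-insertion, and the prefix-sum check is a two-case computation: for $j<k$ use $d_1\ge c_{j+1}$, for $j\ge k$ subtract $d_1$ from both sides of the original inequality. (Symmetrically, one can always peel the smallest target with the consecutive pair bracketing it from below.) With this selection rule your runtime analysis ($O(n\log n)$ for the initial sort; $O(1)$ amortized per step thereafter since the bracketing index only moves monotonically) and the determinant claim (each $Q_i$ is a genuine planar rotation, hence in $\SO(n)$) are correct.
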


We are now ready to prove the following theorem.
\thmdiagonal*
\begin{proof}

    We focus on the case of even $n$ for simplicity. The odd case follows analogously.
Let $m\coloneqq n/2$ and let $\theta_{1}, ..., \theta_{m} \in [0,2\pi)$ to be fixed later. Recall the definition of $R(\theta_1 ,\dots, \theta_k)$ from \eqref{eq:torus_even} and \eqref{eq:torus_odd}, and let $c\coloneqq\diag(R(\theta_1,\dots,\theta_m))$. If we can find $\theta_1,\dots,\theta_m$ so that $c$ majorizes $d$, then we can apply \cref{lem:chan_li} to produce the required element of $\SO(n)$ with diagonal $d$. Equivalently, we may pick $c_1=c_2,c_3=c_4,\dots$ arbitrarily in $[-1,1]$ and define $\theta_i = \arccos(c_{2i})$.
    
    We will set $c_i$ as follows: Let $t \coloneqq \frac{1}{4} (n - \langle d, 1_{n} \rangle)$ and let $j-1 = \lfloor t \rfloor$ be the integer part and $\delta := t - \floor{t}$ be the fractional part of $t$. We set 
\[ c_{1} = ... = c_{2(j-1)} = -1, \qquad c_{2j+1} = ... = c_{n} = 1 ,     \]
    and the remaining elements we set as $c_{2j-1} = c_{2j} = 1-2\delta$. Note that $1-2\delta\in[-1,1]$ since the fractional part $\delta \in [0,1]$. Then, we have
    \begin{align*} 
    \langle c, 1_{n} \rangle & = - 2 (j-1) + 2 (1-2\delta) + 2 (m-j) 
    \\ & = \frac{-2}{4} (n - \langle d, 1_{n} \rangle - 4 \delta ) + 2 (1-2\delta) + \frac{2}{4} (n + \langle d, 1_{n} \rangle - 4 (1-\delta)) 
    = \langle d, 1_{n} \rangle  ,  
    \end{align*}
    where the second step was by our definition of $c$, and the third was by our choice of $j$.

    Now we verify the majorization inequalities:
    \begin{align*}
        \forall k \leq 2(j-1)&\qquad \sum_{i=1}^{k} c_{i} = -k \leq \sum_{i=1}^{k} \changetwo{d_{i}^\uparrow},\qquad\text{and}\\
        \forall k \geq 2j + 1 &\qquad \sum_{i=k}^{n} c_{i} = (n-k+1) \geq \sum_{i=k}^{n} \changetwo{d_{i}^\uparrow}. 
    \end{align*}
    Here, the last step in both inequalities hold because $d \in [-1,1]^{n}$. Since $\langle c, 1_{n} \rangle = \langle d, 1_{n} \rangle$, the second set of inequalities are equivalent to 
\[ \forall k \geq 2j\qquad \sum_{i=1}^{k} c_{i} = -k \leq \sum_{i=1}^{k} \changetwo{d_{i}^\uparrow} .    \]
    We now verify the final inequality for index $k := 2j-1$:
    \begin{align*} 
    \sum_{i=1}^{k} c_{i} & = -2(j-1) + (1-2\delta) = 1 - \frac{1}{2} (n - \langle c, 1_{n} \rangle )
    = \frac{1}{2} ( \langle d, 1_{n} \rangle - (n-2) ) \leq \sum_{i=1}^{k} \changetwo{d_{i}^\uparrow} ,  
    \end{align*}
    where the first step was by definition of $c$, in the second step we used that $j-1 = t-\delta$, in the third step we used that $\langle c, 1_{n} \rangle = \langle d, 1_{n} \rangle$, and the final step was by the defining inequalities of $\PP_{n}$ given in \cref{eq:parityInequalities} for odd set $S = [k] = [2j-1]$. 
    
    Setting $\cos(\theta_{i}) = c_{2i}$, we have $R(\theta_{1}, ..., \theta_{m}) \in \SO(n)$ with diagonal $c$ majorizing $d$.
    
    Now, apply \cref{lem:chan_li} to get a matrix $U = \prod_{i=1}^{n-1} Q_i \in\SO(n)$ such that
    \begin{align*}
        \diag(U^\intercal \Diag(c) U) = d. 
    \end{align*}
    For notational convenience, write $R$ for $R(\theta_1,\dots,\theta_m)$. Then, $U^\intercal R U\in\SO(n)$ satisfies
    \begin{align*}
        \diag(U^\intercal R U) &= \diag\left(U^\intercal \Diag(c) U\right) + \diag\left(U^\intercal (R-\Diag(c))U\right)\\
        &= \diag(U^\intercal \Diag(c)U) = d.
    \end{align*}
    Here, the second line follows as $R - \Diag(c)$ is skew symmetric so that $U^\intercal(R - \Diag(c))U$ must also be skew symmetric. In particular, $\diag(U^\intercal(R-\Diag(c))U) = 0$.

    The time complexity follows from the fact that each $Q_i$ differs from the identity only in a principal $2 \times 2$ block, so all $n-1$ conjugations by $Q_1, ..., Q_{n-1}$ can be completed in $O(n^2)$ time. \ije{\qedhere}
\end{proof}

\section{Optimization on $\SO(n)$ subject to one constraint}
\label{sec:son_one_constraint}

This section will discuss optimization of a linear function over $\SO(n)$ subject to a single (possibly two-sided) linear constraint:
\begin{align}
    \label{eq:one_constraint_problem}
    &\max_{X\in\SO(n)}\set{\ip{A,X}:\, \ip{B,X}\in[a,b]}.
\end{align}
We will provide a proof that for problems of the form \eqref{eq:one_constraint_problem}, the convex relaxation that replaces $\SO(n)$ with $\conv(\SO(n))$ is exact.
Moreover, we will give a practical algorithm for this problem that runs in roughly the same time as \changetwo{Wahba's problem without additional constraints}, i.e., in the time to compute an SVD.

The technical core of these results lies in the following theorem:
\thmoneconnected*
This theorem implies the following fact using the observation that a subset of $\R^2$ is convex if and only if its intersection with every one-dimensional affine subspace is connected.
\thmtwoconvex*

\change{
\begin{proof}
Let $x,y\in\pi(\SO(n))$ denote a pair of distinct points.
By definition, there exist $X,Y\in\SO(n)$ so that $\pi(X) = x$ and $\pi(Y) = y$.
Let $\ell\in\R^2\setminus\set{0}$, $\alpha\in\R$ parameterize the affine line so that $\ip{\ell,x} = \ip{\ell,y} = \alpha$. Thus, $\pi(\SO(n))\cap\set{z\in\R^2:\, \ip{\ell,z} = \alpha}$ is the linear image of the connected set
\begin{align*}
    \SO(n)\cap \set{Z\in\SO(n):\,\ip{\pi^*(\ell), Z} = \alpha}.
\end{align*}
In particular, $\pi(\SO(n))\cap\set{z\in\R^2:\, \ip{\ell,z} = \alpha}$ is connected so $[x,y]$ is contained in $\pi(\SO(n))$.\ije{\qedhere}
\end{proof}
}

As we have seen, this fact implies that any optimization problem of the form \eqref{eq:one_constraint_problem} can be solved as a convex optimization problem on $\conv(\SO(n))$. Thus, one could theoretically solve this problem with an exponentially sized SDP using \cref{thm:convSOn}.
Alternatively, we give an algorithm which can successfully solve any such optimization problem in $O(n^3\log^2(n))$ time.

In the case of $\SO(3)$, \cref{thm:twoconvex} can be viewed as a corollary of Brickman's theorem \cite{brickman1961field}, which states that the image of the unit sphere under a homogeneous quadratic map into $\R^2$ is always convex.
This, together with the fact that $\SO(3)$ is the image of a sphere under a quadratic map shows the result in that case (see also \cref{sec:quadratic_convexity}).
On the other hand, \cref{thm:twoconvex} does not follow directly from known quadratic convexity theorems for $n\geq 4$.

\subsection{Topological preliminaries for the proof of \Cref{thm:oneconnected}}
The proof of \Cref{thm:oneconnected} will require some topological techniques, which we review here.
We attempt to be as explicit as possible in our constructions and proofs to make them accessible to readers that are less familiar with such arguments.
As a general reference for (algebraic) topology, we refer to~\cite{MR1867354}.

As motivation, consider the (easy) problem of showing that \change{the image of $\SO(n)$ under a continuous map into $\R^1$ is connected (and hence convex).
We can see this by appealing to two facts. First,}
$\SO(n)$ is a connected set.
\change{Second,}
the image of a connected set under any continuous map is connected.
\change{
Our proof of \cref{thm:oneconnected} will be spiritually similar and will require generalizations of these two key ingredients.    
First, we will need to understand in what ways we can ``move around'' $\SO(n)$. This will be answered in \cref{lem:curve}, where we will build ``loops'' in $\SO(n)$ with nice properties. Second, we will need to understand how these loops behave under a continuous map into $\R^2$. This will be answered in \cref{lem:existencepoint}.
}

In order to formalize these statements,
\change{we will need some basic definitions from topology/homotopy theory that we present below.}
We encourage the reader to keep the following topological spaces in mind:
\begin{itemize}
    \item $\SO(n)\subseteq\R^{n\by n}$ viewed as a subtopological space of $\R^{n\by n}$ with the standard topology.
    \item The punctured plane $\R^2\setminus (0,0)$ viewed as a subtopological space of $\R^2$ with the standard topology.
\end{itemize}

Let $X$ be a topological space with a designated base point $x\in X$.
The fundamental group of $X$, denoted $\pi_1(X)$, is a group whose elements are (equivalence classes of) \change{continuous} functions $\gamma : [0,1] \rightarrow X$ so that $\gamma(0) = \gamma(1) = x$.
We will refer to such functions as \emph{loops}.
We will say that two loops $\gamma_1$ and $\gamma_2$ are equivalent if there
exists a continuous function $T : [0,1] \times [0,1] \rightarrow X$ so that $T(0,t) = \gamma_1(t)$ and $T(1,t) = \gamma_2(t)$ for all $t \in [0,1]$.
We refer to such a $T$ as a \emph{homotopy}.
Intuitively, two loops $\gamma_1$ and $\gamma_2$ are equivalent if $\gamma_1$ can be continuously deformed into $\gamma_2$.
This set of loops can be made into a group with the group operations being the concatenation of loops.

The identity element of $X$ is represented by the constant loop given by $i(t) = x$ for all $t \in [0,1]$.
If $X$ is path connected, then the fundamental group is independent of the choice of basepoint---this will be the case for all topological spaces we consider.

For example, the fundamental group of the punctured plane $\R^2\setminus(0,0)$ is $\Z$. Explicitly, any loop in $\R^2\setminus(0,0)$ can be identified with the number of times it winds anticlockwise around the origin.
Less intuitively, we will also need the fact that for $n\geq 3$, the fundamental group of $\SO(n)$ is $\Z/2\Z$.
\change{An important consequence of this fact is that given any loop in $\SO(n)$ (for $n\geq 3$), simply concatenating this loop with itself results in the identity element in $\pi_1(\SO(n))$, which can then be continuously deformed to a point.}
This will be used in \cref{lem:existencepoint} below, for which
\cref{fig:proof_lem3} shows a cartoon of the proof strategy.

This will be relevant to us in the following context:
\begin{lemma}
    \label{lem:existencepoint}
    Let $n  \ge 3$.
    Suppose $f : \SO(n) \rightarrow \R^2$ is a continuous map and $\gamma:[0,1]\to \SO(n)$ is a loop such that $(0,0)$ is not in the image $(f_*(\gamma))([0,1])$. In this case, we may view $f_*(\gamma)$ as a loop in $\R^2\setminus(0,0)$.
    If $f_*(\gamma)$ is not equivalent to the identity element in $\pi_1(\R^2\setminus(0,0))$, then $(0,0)\in f(\SO(n))$.
\end{lemma}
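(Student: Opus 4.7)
The plan is to argue the contrapositive. Suppose for contradiction that $(0,0) \notin f(\SO(n))$. Then $f$ factors as $f = \iota \circ \tilde f$, where $\tilde f : \SO(n) \to \R^2 \setminus (0,0)$ is continuous and $\iota : \R^2 \setminus (0,0) \hookrightarrow \R^2$ is the inclusion. By functoriality of $\pi_1$, we obtain a group homomorphism
\[
    \tilde f_* : \pi_1(\SO(n)) \to \pi_1(\R^2 \setminus (0,0)).
\]

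Next I would appeal to two standard facts from algebraic topology, which are available to us since $n \geq 3$: $\pi_1(\SO(n)) \cong \Z/2\Z$ and $\pi_1(\R^2 \setminus (0,0)) \cong \Z$ (the latter via the winding number). Because $\Z$ is torsion-free, the only group homomorphism $\Z/2\Z \to \Z$ is the trivial one, so $\tilde f_*$ must be the zero map. In particular, for the loop $\gamma$ given in the statement, $\tilde f_*([\gamma])$ is the identity element of $\pi_1(\R^2 \setminus (0,0))$.

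Finally, I would reconcile this with the hypothesis on $f_*(\gamma)$. By assumption, the image of $f \circ \gamma$ does not contain $(0,0)$, so $f \circ \gamma = \tilde f \circ \gamma$ and $f_*(\gamma)$ is naturally interpreted as a loop in $\R^2 \setminus (0,0)$ (as the lemma statement already indicates). Thus $f_*(\gamma)$ represents the class $\tilde f_*([\gamma])$, which we just showed is trivial in $\pi_1(\R^2 \setminus (0,0))$. This contradicts the hypothesis that $f_*(\gamma)$ is not equivalent to the identity loop, completing the proof.

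There is no real obstacle here once the structural setup is in place; the whole argument is a bookkeeping exercise on top of the two topological inputs $\pi_1(\SO(n)) = \Z/2\Z$ and $\pi_1(\R^2 \setminus (0,0)) = \Z$. The only point requiring mild care is justifying that $f_*(\gamma)$ and $\tilde f_*([\gamma])$ refer to the same class, which is immediate from the fact that $\iota$ is an inclusion and $f \circ \gamma$ already avoids the origin by hypothesis.
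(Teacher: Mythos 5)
Your proposal is correct and follows essentially the same argument as the paper: assume $(0,0)\notin f(\SO(n))$, view $f$ as landing in $\R^2\setminus(0,0)$, observe that the induced homomorphism $\pi_1(\SO(n))\cong\Z/2\Z\to\pi_1(\R^2\setminus(0,0))\cong\Z$ must be trivial since $\Z$ is torsion-free, and derive a contradiction with the hypothesis on $[f_*(\gamma)]$. The only cosmetic difference is that you explicitly name the corestriction $\tilde f$ and the inclusion $\iota$, which the paper leaves implicit.
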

\begin{proof}
\change{
We may assume that $\gamma$ is equivalent to the identity element in $\pi_1(\SO(n))$ by concatenating $\gamma$ with itself. Indeed, this produces the identity element in $\pi_1(\SO(n))$ as  $\pi_1(\SO(n))=\Z/2\Z$.
Additionally, if $f_*(\gamma)$ is not equivalent to the identity element in $\pi_1(\R^2\setminus(0,0))$, then concatenating this loop with itself does not yield the identity element.

Let $T$ be a homotopy between $\gamma$ and the identity element $i$ in $\pi_1(\SO(n))$, i.e., assume $T(0,t) = \gamma(t)$ and $T(1,t) = i(t)$.
Now, consider the image $(f\circ T) ([0,1]\times[0,1])$. We claim that $(0,0)$ is in this image. Indeed, supposing otherwise, 
we have constructed a homotopy between $f_*(\gamma)$ and the identity element in $\pi_1(\R^2\setminus (0,0))$, a contradiction.\ije{\qedhere}
}
\end{proof}

By adding a translation term to $f$, we can apply \cref{lem:existencepoint} with an arbitrary point $\beta\in\R^2$ in place of the origin.
\Cref{fig:projectiontorus}
depicts a two-dimensional linear image of $\SO(3)$ and a loop $\gamma$ in $\SO(3)$ and gives some intuition on how we will use \cref{lem:existencepoint} to prove \cref{thm:oneconnected}.

\begin{figure}
    \centering
    \def\svgscale{0.75}
    \begingroup \makeatletter \providecommand\color[2][]{\errmessage{(Inkscape) Color is used for the text in Inkscape, but the package 'color.sty' is not loaded}\renewcommand\color[2][]{}}\providecommand\transparent[1]{\errmessage{(Inkscape) Transparency is used (non-zero) for the text in Inkscape, but the package 'transparent.sty' is not loaded}\renewcommand\transparent[1]{}}\providecommand\rotatebox[2]{#2}\newcommand*\fsize{\dimexpr\f@size pt\relax}\newcommand*\lineheight[1]{\fontsize{\fsize}{#1\fsize}\selectfont}\ifx\svgwidth\undefined \setlength{\unitlength}{387.10231991bp}\ifx\svgscale\undefined \relax \else \setlength{\unitlength}{\unitlength * \real{\svgscale}}\fi \else \setlength{\unitlength}{\svgwidth}\fi \global\let\svgwidth\undefined \global\let\svgscale\undefined \makeatother \begin{picture}(1,0.34360314)\lineheight{1}\setlength\tabcolsep{0pt}\put(0,0){\includegraphics[width=\unitlength,page=1]{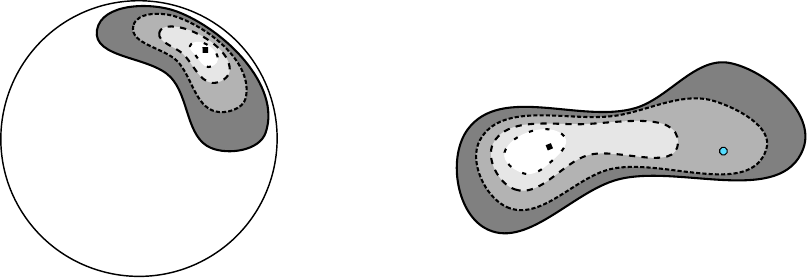}}
\put(0.43105375,0.14578714){\color[rgb]{0,0,0}\makebox(0,0)[lt]{\lineheight{1.25}\smash{\begin{tabular}[t]{l}$\stackrel{f}{\longrightarrow}$\end{tabular}}}}\put(0.06739619,0.10043035){\color[rgb]{0,0,0}\makebox(0,0)[lt]{\lineheight{1.25}\smash{\begin{tabular}[t]{l}$\SO(n)$\end{tabular}}}}\put(0.80328983,0.28258226){\color[rgb]{0,0,0}\makebox(0,0)[lt]{\lineheight{1.25}\smash{\begin{tabular}[t]{l}$\R^2$\end{tabular}}}}\end{picture}\endgroup      \caption{A cartoon of the proof of \cref{lem:existencepoint}. \change{On the left,} we depict a loop (solid line) in $\SO(n)$, that is equivalent to the identity element, and its deformations (dashed lines) to a point.
    \change{On the right is the \emph{image} of the loop and its deformations under the map $f:\SO(n)\to\R^2$.
    The origin in $\R^2$ is shown in blue.
    By assumption, $f_*(\gamma)$ ``winds'' around the origin a nonzero number of times.
    We will show that the image of the deformations must eventually ``pass through'' the origin. This shows that the origin is in the image of $f(\SO(n))$.
    }}
    \label{fig:proof_lem3}
\end{figure}

\begin{figure}
    \centering
    \includegraphics*[width=0.5\linewidth]{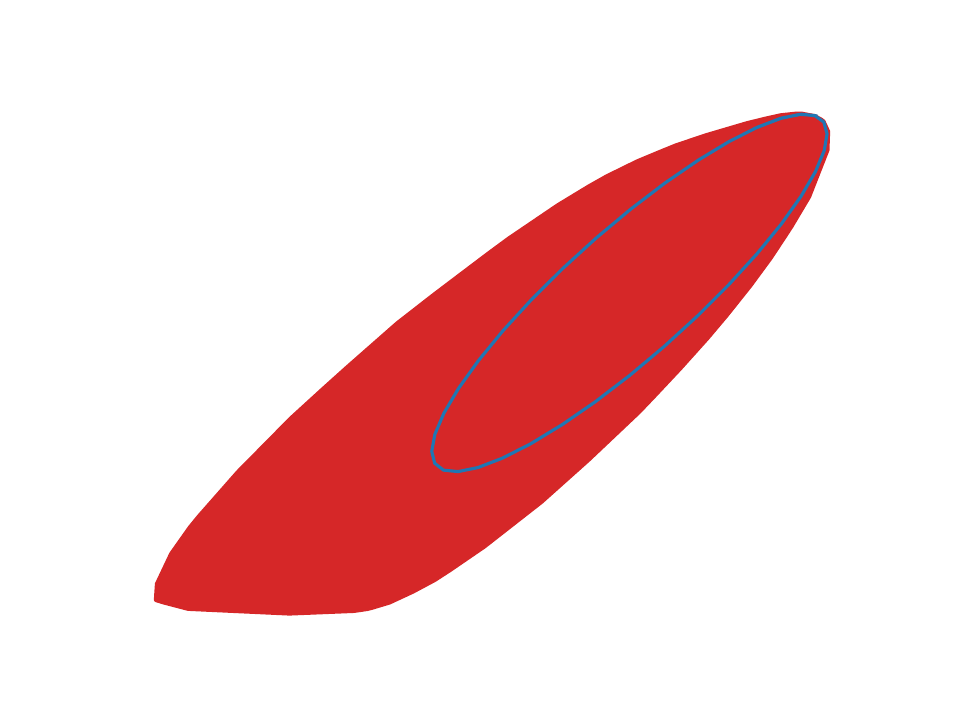}
    \caption{The solid red area is the image of $\SO(3)$ under a linear map into $\R^2$. The blue ellipse is the image of a loop in $\SO(3)$ under the same linear map.
    The (image of the) loop begins at some point on the blue ellipse and walks clockwise around the ellipse exactly once. 
    \cref{lem:existencepoint} implies that every point in $\R^2$ contained within this blue ellipse must be contained in the image of $\SO(3)$.
    The proof of \cref{thm:oneconnected} will follow a similar idea with a continuous but nonlinear function. }
    \label{fig:projectiontorus}
\end{figure}

\subsection{Proof of \Cref{thm:oneconnected}}
This subsection will contain a proof of \Cref{thm:oneconnected}.
Let $H = \set{X\in\R^{n\by n}:\, \ip{A,X} = c}$.
We will require the following \change{construction}.
\begin{lemma}
    \label{lem:curve}
Let $U,V\in H\change{\cap \SO(n)}$. Then, there exists a continuous function $\gamma : [0,1] \rightarrow \SO(n)$ with the following properties:
    \begin{itemize}
        \item $\gamma(0) = \gamma(1) = U$,
        \item $\gamma(\frac{1}{2}) = V$,
        \item either $\ip{A,\gamma(t)} = c$ for all $t \in (0,\frac{1}{2})$ or $\ip{A, \gamma(t)} > c$ for all $t\in(0,\frac{1}{2})$, and
        \item either $\ip{A,\gamma(t)} = c$ for all $t \in (\frac{1}{2},1)$ or $\ip{A, \gamma(t)} < c$ for all $t\in(\frac{1}{2},1)$.
    \end{itemize}
\end{lemma}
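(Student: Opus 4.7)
My plan is to proceed by case analysis on whether $U$ and $V$ lie in the same path-connected component of $H\cap\SO(n)$.

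If they do, I would pick a continuous path $\alpha:[0,\tfrac{1}{2}]\to H\cap\SO(n)$ from $U$ to $V$ and define $\gamma(t)=\alpha(t)$ for $t\in[0,\tfrac{1}{2}]$ and $\gamma(t)=\alpha(1-t)$ for $t\in[\tfrac{1}{2},1]$. Then $\ip{A,\gamma(t)}=c$ identically on both halves, giving the ``$=c$'' alternative of the lemma on each side.

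Otherwise, $U$ and $V$ lie in distinct components of $H\cap\SO(n)$. I would construct two paths $\phi_+:[0,\tfrac{1}{2}]\to\SO(n)$ from $U$ to $V$ with $\ip{A,\phi_+(t)}>c$ on $(0,\tfrac{1}{2})$, and $\phi_-:[\tfrac{1}{2},1]\to\SO(n)$ from $V$ to $U$ with $\ip{A,\phi_-(t)}<c$ on $(\tfrac{1}{2},1)$, and concatenate them to form $\gamma$. For $\phi_+$, I would first argue that $\str(A)>c$; otherwise $U$ and $V$ would both maximize $\ip{A,\cdot}$ on $\SO(n)$, and one can check (via the SVD of $A$ and the resulting explicit description of the argmax) that the maximizer set is path-connected, contradicting the case. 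Fixing a maximizer $X_+$, I would take any continuous path $\beta$ from $U$ through $X_+$ to $V$ in $\SO(n)$ and deform it to lie strictly in $H^+=\set{X\in\SO(n):\ip{A,X}>c}$ on the open interior. A concrete deformation is to interpolate $\beta(t)$ with $X_+$ in the ambient space as $B_\lambda(t)=(1-\lambda(t))\beta(t)+\lambda(t)X_+$ for a bump function $\lambda$ vanishing at the endpoints, and then retract to $\SO(n)$, e.g.\ via the SVD projection; by continuity, this yields $\ip{A,\cdot}>c$ on compact subintervals of $(0,\tfrac{1}{2})$ for suitable $\lambda$. The construction of $\phi_-$ is symmetric, using a minimizer of $\ip{A,\cdot}$ on $\SO(n)$ (whose existence below level $c$ is ensured by applying the analogous connectedness argument to $-A$).

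The main obstacle is making the deformation rigorous across the entire open interval, especially near the endpoints where $\lambda\to 0$. The SVD retraction is only continuous away from matrices with coincident top singular values, so either $\beta$ must be chosen to avoid such degeneracies or a smoother retraction must be used. Controlling the dip of $\ip{A,\beta}$ below $c$ against the magnitude of $\lambda$ near $t=0,\tfrac{1}{2}$ requires a careful uniform estimate. An alternative is to flow $\beta$ along the Riemannian gradient of $\ip{A,\cdot}|_{\SO(n)}$ with a time-dependent step vanishing at the endpoints; this is more intrinsic but must separately handle critical points where the gradient vanishes, e.g.\ by subdividing $\beta$ at critical crossings and lifting each segment individually.
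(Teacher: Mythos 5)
Your approach---deform a path from $U$ through a global maximizer $X_+$ to $V$ so that it stays above level $c$ on the interior, and symmetrically for the other half---differs fundamentally from the paper's, which is explicit and algebraic. You yourself flag the main gap, and it is a real one: the ambient interpolation with a bump function $\lambda$ vanishing at the endpoints, followed by an SVD retraction, is not shown to keep $\ip{A,\cdot}$ strictly above $c$ as $t\to 0^+$ and $t\to\tfrac{1}{2}^-$. Near $t=0$, the ``boost'' provided by interpolation toward $X_+$ scales like $\lambda(t)(\str(A)-c)$, but the underlying path $\beta$ can decrease $\ip{A,\cdot}$ linearly in $t$ if $\ip{A,\dot\beta(0)}<0$; controlling this means either carefully choosing the initial tangent directions of $\beta$ at $U$ and $V$ (which in turn requires ruling out that $U$ or $V$ is a strict local maximizer of $\ip{A,\cdot}|_{\SO(n)}$ at level $c$) or making $\lambda$ vanish only linearly, and neither of these is carried out. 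Saying it works ``on compact subintervals of $(0,\tfrac{1}{2})$'' concedes exactly the missing piece, since the claim is needed on the full open interval. In addition, your case split requires that the argmax and argmin of $\ip{A,\cdot}$ on $\SO(n)$ are path-connected sets, which you invoke but do not prove.

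The paper sidesteps all of these analytic difficulties with an explicit construction. After conjugating by some $W\in\SO(n)$ and translating by $U$, it reduces to the case $U=I$, $V=R(\theta_1,\dots,\theta_k)\in\T$ (the maximal torus), with a modified objective matrix $\tilde A = W^\intercal U^\intercal A W$. Restricted to the torus, $\ip{\tilde A, R(\phi_1,\dots,\phi_k)}$ is a sum $\sum_i c_i\cos(\phi_i-\hat\phi_i)$ with $c_i\ge 0$. Each coordinate path $\phi_i(t)$ is then chosen piecewise (hitting $\hat\phi_i$ at $t=\tfrac{1}{4}$, $\theta_i$ at $t=\tfrac{1}{2}$, $\hat\phi_i-\pi$ at $t=\tfrac{3}{4}$, and returning to $0$) so that each term $c_i\cos(\phi_i(t)-\hat\phi_i)$ is linear in $t$ on each of the four subintervals. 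The whole objective is therefore piecewise linear in $t$, rising from $c$ to $\str(\tilde A)=\sum c_i$ and back to $c$ on $[0,\tfrac12]$, then falling to $-\sum c_i$ and back to $c$ on $[\tfrac12,1]$; linearity immediately gives the trichotomy in the lemma (the ``$=c$'' alternatives arise exactly when $\sum c_i = c$ or $-\sum c_i = c$). No retraction, deformation, or connectivity of argmax sets is needed. If you want to salvage your approach, you would need to (i) establish connectedness of the maximizer and minimizer sets, and (ii) prove a quantitative version of the deformation near the endpoints, likely by controlling the initial tangent of $\beta$; but the torus construction is substantially shorter and entirely elementary.
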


\begin{proof}

    \change{We may write $U = R(\eta_1,\dots,\eta_k)$ for some $\eta_i\in[0,2\pi)$} and $V = R(\theta_1,\dots,\theta_k)$ for some $\theta_i\in[0,2\pi)$.
    
    Let $\bS^1$ denote the unit circle in $\R^2$, thought of as the points $[0,2\pi)$ where $0$ and $2\pi$ are identified.
    Let $\T$ denote the set of all matrices $R(\phi_1, \dots, \phi_k)$ as $\phi_1,\dots,\phi_k$ range over $\bS^1$.

    \change{
        Now, consider the expression $\ip{A, R(\phi_1,\dots,\phi_k)}$, which we can express as a sum over the blocks of $R(\phi_1, \dots, \phi_k)$:
        \[
            \ip{A, R(\phi_1,\dots,\phi_k)} = 
            \sum_{\ell = 1}^{k}
            \ip{\begin{pmatrix} A_{2i - 1, 2i - 1} & A_{2i - 1, 2i}\\  A_{2i, 2i - 1} & A_{2i, 2i}\end{pmatrix}, \begin{pmatrix} \cos(\phi_i)& \sin(\phi_i)\\-\sin(\phi_i)&\cos(\phi_i) \end{pmatrix}} + 
            c_0,
        \]
        where $c_0 = A_{nn}$ if $n$ is odd and 0 otherwise.
        We may rewrite this as
        \begin{align*}
            \ip{A, R(\phi_1,\dots,\phi_k)} &= 
            \sum_{\ell = 1}^{k}
            \ip{\begin{pmatrix} A_{2i - 1, 2i - 1} +  A_{2i, 2i} \\ A_{2i - 1, 2i} - A_{2i, 2i - 1}\end{pmatrix}, \begin{pmatrix} \cos(\phi_i)\\\sin(\phi_i)\end{pmatrix}} +
            c_0\\
            &= 
            \sum_{i=1}^k c_i \ip{\begin{pmatrix}
            \cos(\hat\phi_i)\\
            \sin(\hat\phi_i)
            \end{pmatrix}, \begin{pmatrix}
            \cos(\phi_i)\\
            \sin(\phi_i)
            \end{pmatrix}}
            + c_0\\
            &= \sum_{i=1}^k c_i \cos(\hat\phi_i - \phi_i)
                + c_0,
        \end{align*}
        where $c_i \ge 0$ and $\hat{\phi}_i \in \bS^1$ are the polar coordinates for $\begin{pmatrix} A_{2i - 1, 2i - 1} +  A_{2i, 2i} \\ A_{2i - 1, 2i} - A_{2i, 2i - 1}\end{pmatrix}$.
    }

        \change{
        We will construct the loop $\gamma(t)$ by setting $\gamma(t) = R(\phi_1(t), \dots, \phi_k(t))$ where $\phi_i(t)$ will be defined shortly. Specifically, $\phi_i(t)$ will be a continuous function satisfying $\phi_i(0) = \phi_i(1) =\eta_i$ and $\phi_i(\frac{1}{2})=\theta_i$. In particular, $\gamma(t)$ will be continuous and satisfy $\gamma(0)=\gamma(1) = U$ and $\gamma(1/2) = V$.

        Now, fix some $i\in[k]$.
        We will parameterize $\phi_i(t) = \hat\phi_i - \delta(t)$. Note the requirement $\phi_i(0)=\phi_i(1) = \eta_i$ and $\phi_i(1/2) = \theta_i$ is equivalent to $\delta(0) = \delta(1) = \hat\phi_i - \eta_i$ and $\delta(1/2) = \hat\phi_i - \theta_i$. Note that $\cos(\hat\phi_i - \phi_i(t)) = \cos(\delta(t))$ is simply the ``$x$-coordinate'' of $\delta(t)$.

        Consider the locations of $\hat\phi_i - \eta_i$, $\hat\phi_i - \theta_i$ on the unit circle $\bS^1$.
        If $\hat\phi_i - \eta_i$ is on the bottom half of the unit circle, then let $\delta(3/4) = \pi$ and set $\delta(t)$ to move counterclockwise along the bottom half of the circle until $\delta(1) = \hat\phi_i - \eta_i$. Also set $\delta(0) = \hat\phi_i - \eta_i$ and set $\delta(t)$ to move counterclockwise along the bottom half of the circle until $\delta(1/4) = 0$.
        On the other hand, if $\hat\phi_i - \eta_i$ is on the top half of the unit circle, then let $\delta(3/4) = \pi$ and set $\delta(t)$ to move clockwise along the top half of the circle until $\delta(1) = \hat\phi_i - \eta_i$. Also set $\delta(0) = \hat\phi_i - \eta_i$ and set $\delta(t)$ to move clockwise along the top half of the circle until $\delta(1/4) = 0$.
        Note that in both cases, the function $\cos(\delta(t))$ is either constant or strictly increasing on each interval $[0,1/4]$ and $[3/4,1]$.
        Completely analogous ideas can be used to define $\delta(t)$ on the intervals $[1/4,1/2]$ and $[1/2,3/4]$ so that $\delta(1/2) = \hat\phi_i - \theta_i$ and $\cos(\delta(t))$ is either constant or strictly decreasing on each interval $[1/4,1/2]$ and $[1/2,3/4]$.
        (See \cref{fig:proof_one_connected} for two examples of this construction.)

        }

        We then define $\gamma(t) \coloneqq R(\phi_1(t), \phi_2(t), \dots, \phi_k(t))$.
        We see that this loop satisfies all of the properties desired based on properties of the individual $\phi_i(t)$.\ije{\qedhere}
\end{proof}
    
    \begin{figure}
        \centering
        \begin{tikzpicture}[scale=0.8]
            \begin{scope}[shift={(-5,-5)}]
                \begin{scope}[rotate=135]
                    \draw[ultra thick, ->] (1,0) arc (0:-135:1);
                \end{scope}
                \draw (0,0) circle (1);
                \draw[->,rotate=0] (0,0) -- (1.5,0) node[anchor=west] {$0$};
                \draw[->,rotate=90] (0,0) -- (1.5,0) node[anchor=west] {$\hat\phi_i - \theta_i$};
                \draw[->,rotate=135] (0,0) -- (1.5,0) node[anchor=east] {$\hat\phi_i - \eta_i$};
    
                \node at (0,-2) {$t=\left[0,\tfrac{1}{4}\right]$};
                \end{scope}
        
            \begin{scope}[shift={(0,-5)}]
                    \draw[ultra thick, ->] (1,0) arc (0:90:1);
                \draw (0,0) circle (1);
                \draw[->,rotate=0] (0,0) -- (1.5,0) node[anchor=west] {$0$};
                \draw[->,rotate=90] (0,0) -- (1.5,0) node[anchor=west] {$\hat\phi_i - \theta_i$};
                \draw[->,rotate=135] (0,0) -- (1.5,0) node[anchor=east] {$\hat\phi_i - \eta_i$};
                \node at (0,-2) {$t=\left[\tfrac{1}{4},\tfrac{1}{2}\right]$};
                \end{scope}

            \begin{scope}[shift={(5,-5)}]
                \begin{scope}[rotate=90]
                    \draw[ultra thick, ->] (1,0) arc (0:90:1);
                \end{scope}
                \draw (0,0) circle (1);
                \draw[->,rotate=0] (0,0) -- (1.5,0) node[anchor=west] {$0$};
                \draw[->,rotate=90] (0,0) -- (1.5,0) node[anchor=west] {$\hat\phi_i - \theta_i$};
                \draw[->,rotate=135] (0,0) -- (1.5,0) node[anchor=east] {$\hat\phi_i - \eta_i$};
                \node at (0,-2) {$t=\left[\tfrac{1}{2},\tfrac{3}{4}\right]$};
                \end{scope}
    
            \begin{scope}[shift={(10,-5)}]  
                \begin{scope}[rotate=180]
                    \draw[ultra thick, ->] (1,0) arc (0:-45:1);
                \end{scope}
                \draw (0,0) circle (1);
                \draw[->,rotate=0] (0,0) -- (1.5,0) node[anchor=west] {$0$};
                \draw[->,rotate=90] (0,0) -- (1.5,0) node[anchor=west] {$\hat\phi_i - \theta_i$};
                \draw[->,rotate=135] (0,0) -- (1.5,0) node[anchor=east] {$\hat\phi_i - \eta_i$};
                \node at (0,-2) {$t=\left[\tfrac{3}{4},1\right]$};
                \end{scope}

                \begin{scope}[shift={(-5,0)}]
                    \begin{scope}[rotate=90]
                        \draw[ultra thick, ->] (1,0) arc (0:-90:1);
                    \end{scope}
                    \draw (0,0) circle (1);
                    \draw[->,rotate=0] (0,0) -- (1.5,0) node[anchor=west] {$0$};
                    \draw[->,rotate=-135] (0,0) -- (1.5,0) node[anchor=east] {$\hat\phi_i - \theta_i$};
                    \draw[->,rotate=90] (0,0) -- (1.5,0) node[anchor=east] {$\hat\phi_i - \eta_i$};
                    \node at (0,-2) {$t=\left[0,\tfrac{1}{4}\right]$};
                    \end{scope}
            
                \begin{scope}[shift={(0,0)}]
                        \draw[ultra thick, ->] (1,0) arc (0:-135:1);
                    \draw (0,0) circle (1);
                    \draw[->,rotate=0] (0,0) -- (1.5,0) node[anchor=west] {$0$};
                    \draw[->,rotate=-135] (0,0) -- (1.5,0) node[anchor=east] {$\hat\phi_i - \theta_i$};
                    \draw[->,rotate=90] (0,0) -- (1.5,0) node[anchor=east] {$\hat\phi_i - \eta_i$};
                    \node at (0,-2) {$t=\left[\tfrac{1}{4},\tfrac{1}{2}\right]$};
                    \end{scope}

                \begin{scope}[shift={(5,0)}]
                    \begin{scope}[rotate=-135]
                        \draw[ultra thick, ->] (1,0) arc (0:-45:1);
                    \end{scope}
                    \draw (0,0) circle (1);
                    \draw[->,rotate=0] (0,0) -- (1.5,0) node[anchor=west] {$0$};
                    \draw[->,rotate=-135] (0,0) -- (1.5,0) node[anchor=east] {$\hat\phi_i - \theta_i$};
                    \draw[->,rotate=90] (0,0) -- (1.5,0) node[anchor=east] {$\hat\phi_i - \eta_i$};
                    \node at (0,-2) {$t=\left[\tfrac{1}{2},\tfrac{3}{4}\right]$};
                    \end{scope}
        
                \begin{scope}[shift={(10,0)}]  
                    \begin{scope}[rotate=180]
                        \draw[ultra thick, ->] (1,0) arc (0:-90:1);
                    \end{scope}
                    \draw (0,0) circle (1);
                    \draw[->,rotate=0] (0,0) -- (1.5,0) node[anchor=west] {$0$};
                    \draw[->,rotate=-135] (0,0) -- (1.5,0) node[anchor=east] {$\hat\phi_i - \theta_i$};
                    \draw[->,rotate=90] (0,0) -- (1.5,0) node[anchor=east] {$\hat\phi_i - \eta_i$};
                    \node at (0,-2) {$t=\left[\tfrac{3}{4},1\right]$};
                    \end{scope}
        \end{tikzpicture}
        \caption{\change{Two examples of the construction of $\delta(t)$ in the proof of \cref{lem:curve}. 
        The first row corresponds to $\hat\phi_i - \eta_i = \tfrac{\pi}{4}$ and $\hat\phi_i - \theta_i = \tfrac{-3\pi}{4}$.
        The second row corresponds to $\hat\phi_i -\eta_i= \tfrac{3\pi}{4}$ and $\hat\phi_i - \theta_i = \tfrac{\pi}{2}$.}}
        \label{fig:proof_one_connected}
    \end{figure}

We are now ready to prove \cref{thm:oneconnected}.

\begin{proof}[Proof of \cref{thm:oneconnected}]

Suppose for the sake of contradiction that $H \cap \SO(n)$ is not connected, which by definition means that there exist nonempty closed sets $\cU, \cV \subseteq H \cap \SO(n)$ so that $\cU \cap \cV = \varnothing$, and $\cU \cup \cV = H \cap \SO(n)$.
As $\cU$ is closed, the distance function
\begin{align*}
    \dist_\cU(X) \coloneqq \min_{U\in\cU}\norm{U - X}_\textup{op}
\end{align*}
is well-defined. Let $\delta\coloneqq \min_{V\in\cV}\dist_\cU(V)$. As $\cU$ and $\cV$ are compact and disjoint, we have that $\delta>0$.
Define $f:\SO(n)\to\R^2$ given by
\begin{align*}
    f(X) = \begin{pmatrix}
    \ip{A,X} - c\\
    \dist_\cU(X) - \delta/2
    \end{pmatrix}
\end{align*}
\change{We claim that $(0,0)\notin f(\SO(n))$. Assume for contradiction that $(0,0)\in f(\SO(n))$, i.e., there exists an $X \in \SO(n)$ so that $\langle A, X\rangle = c$ and $\dist_\cU(X) = \frac{\delta}{2}$. As $X\in H\cap\SO(n)$, we have that either $X\in U$ or $X\in V$. In the first case, $\dist_\cU(X) = 0$, a contradiction. In the second case, we have that $\min_{V \in \cV} \dist_\cU(V)\leq\dist_\cU(X) = \delta/2 < \delta = \min_{V \in \cV} \dist_\cU(V)$, again a contradiction. Thus, $(0,0)\notin f(\SO(n))$.}

\change{
Now, fix $U\in\cU$ and $V\in\cV$ and let $\gamma$
be the loop constructed in \Cref{lem:curve}. We claim that $\ip{A,\gamma(t)}>c$ for all $t\in(0,\frac{1}{2})$. Indeed, by \Cref{lem:curve}, if this is not the case, then $\ip{A,\gamma(t)} = c$ for all $t\in[0,\frac{1}{2}]$ so that the path $\gamma(t)$ for $t\in[0,1/2]$ connects $U,V$ in $H\cap\SO(n)$. This contradicts the fact that $U$ and $V$ are not connected in $H\cap\SO(n)$.

}

We now verify that $f$ and $\gamma$ satisfy the assumptions of \cref{lem:existencepoint}.
To do so, we will exhibit a homotopy from $f_*(\gamma)$ to the loop $(\sin(2\pi t),-\cos(2\pi t))$.
Let
\begin{align*}
    T(s,t) \coloneqq s f_*(\gamma)(t)+(1-s)(\sin(2 \pi t), -\cos(2 \pi t)).
\end{align*}
This is clearly a continuous function from $[0,1]\times[0,1]\to\R^2$. Thus, to verify that it is a valid homotopy in $\R^2\setminus(0,0)$ it remains to check that $T(s,t)\neq(0,0)$ for any $(s,t)\in[0,1]\times[0,1]$.
To see this, note that for $t \in (0,\frac{1}{2})$, we have $\ip{A,\gamma(t)} > c$ so that $f(\gamma(t))_1 > 0$.
Additionally, for all $t \in (0,\frac{1}{2})$, $\sin(2\pi t) > 0$.
Thus, $T(s,t)_1 \neq 0$ for all $t\in(0,\frac{1}{2})$ and $s\in[0,1]$.
Similar arguments show that $T(s,t)_1\neq 0$ for all $t\in(\frac{1}{2},1)$ and $s\in[0,1]$, and that
$T(s,t)_2\neq 0$ for all $t \in\set{0,\frac{1}{2},1}$ and $s\in[0,1]$.

Finally, \cref{lem:existencepoint} implies that $(0,0)\in f(\SO(n))$, a contradiction. We conclude that $H\cap \SO(n)$ is connected.\ije{\qedhere}
\end{proof}

\subsection{Algorithms for \changetwo{One} Constraint Optimization}
Here, we aim to solve the optimization problem given in \eqref{eq:ndmax}.
\thmtwodalgo*

\change{
To state the algorithm we note that for any $A, B \in \R^{n\times n}$, \eqref{eq:ndmax} is equivalent to the following optimization problem
\begin{equation*}
    \max_{x \in \R^{2}} \{x_1 : x \in \pi(\SO(n)), x_{2} \in [a,b] \}.
\end{equation*}
Here, $\pi(\SO(n))$ is the image of $\SO(n)$ in $\R^2$ given by $\pi(X) = (\langle A, X\rangle, \langle B, X\rangle)$.
By \Cref{thm:twoconvex}, we have that $\pi(\SO(n))$ is convex. Therefore the above is a linear optimization problem over convex set $C \coloneqq \pi(\SO(n)) \cap \{x_{2} \in [a,b]\}$, and we can apply standard methods from convex optimization to \change{solve the above equivalent formulation of \eqref{eq:ndmax}}.
}

For this, we appeal to the ellipsoid algorithm, \change{which we stated in \Cref{thm:ellipsoid}}.
\change{
By \Cref{thm:ellipsoid}, it suffices to provide a weak separation oracle for the set $C = \pi(\SO(n)) \cap \{x_{2} \in [a,b]\}$. As the second constraint is trivial to separate over, we focus on separating over the convex set $\pi(\SO(n))$.

It will be useful to have a subroutine for minimizing $h(y)$ over the unit $\ell_1$-ball in $\R_2$, where
\[
    h(y) \coloneqq \str(\pi^*(y)) - \langle y, x\rangle.
\]

\begin{lemma}
    \label{lem:subroutine}
    Given $x \in \R^{2}$ with $\|x\|_{\infty} \leq 1 + \epsilon$, we can construct $\widehat{y}$ with $\norm{\widehat y}_1=1$ so that
    \begin{align*}
        h(\widehat y) - \epsilon \leq \min_y \{h(y) : \|y\|_1 = 1\}
    \end{align*}
    using at most $O\left(\log\left(\frac{1}{\epsilon}\right)\right)$ evaluations of $h$ and additional computations.
\end{lemma}
\begin{proof}

    We note that $\{y : \|y\|_1=1\}$ is a union of 4 line segments, so minimizing $h$ on this set can be done by minimizing the following 4 univariate functions on $[0,1]$:
    \[
        g_{\sigma_1\sigma_2}(\alpha) = h(\sigma_1 \alpha, \sigma_2 (1-\alpha)) = \str(\sigma_1\alpha A + \sigma_2(1-\alpha)B) - \sigma_1 \alpha x_1 - \sigma_2 (1-\alpha) x_2,
    \]
    indexed by $\sigma_1,\sigma_2\in\set{\pm1}$.
    Each of the four functions $g_{\sigma_1\sigma_2}$ is a one-dimensional convex function with Lipschitz constant bounded by 
    \begin{align*}
        \norm{A}_{\tr} + \norm{B}_{\tr} + \norm{x}_1 \leq 4+2\epsilon.
    \end{align*}
For each $\sigma_1\sigma_2\in\set{\pm 1}$, we may use golden section search \cite{kiefer1953sequential} to find a $\widehat\alpha_{\sigma_1\sigma_2}\in[0,1]$ such that
    \begin{align*}
    g_{\sigma_1\sigma_2}(\widehat\alpha_{\sigma_1\sigma_2}) \leq \min_{\alpha\in[0,1]}g_{\sigma_1\sigma_2}(\alpha) + \epsilon.
    \end{align*}
    Each application of golden section search requires
        $O\left(\log\left(\frac{1}{\epsilon}\right)\right)$
    evaluations of $g_{\sigma_1\sigma_2}$, or equivalently, evaluations of $h$.
\ije{\qedhere}
\end{proof}
}

\begin{lemma}
    Let $n\geq 3$ and $A,B\in\R^{n\by n}$ with $\norm{A}_{\tr} = \norm{B}_{\tr} = 1$.
    There is a weak separation oracle for the set $\pi(\SO(n))$ that runs in time $O(n^3\log(\frac{1}{\epsilon}))$.
\end{lemma}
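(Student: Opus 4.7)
The plan is to build the separation oracle out of the support function of $K := \pi(\SO(n))$, exploiting that the support function is exactly $\str(\cdot)$ evaluated on a two-parameter pencil of matrices, and hence computable by a single SVD. Concretely, for any $u = (u_1, u_2) \in \R^2$,
\[
    h_K(u) \;:=\; \sup_{y \in K}\ip{u, y} \;=\; \sup_{X \in \SO(n)}\ip{u_1 A + u_2 B, X} \;=\; \str(u_1 A + u_2 B),
\]
which by \cref{sec:prelim} can be computed in time $O(n^3)$. Moreover, the SVD used to compute $\str(u_1 A + u_2 B)$ simultaneously produces a maximizer $X^* \in \SO(n)$, so $\pi(X^*) \in \partial h_K(u)$ is available at the same cost.

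Next, I would express the distance from a query $x \in \R^2$ to $K$ in dual form. Since $K$ is compact and convex, applying Sion's minimax to $\dist(x, K) = \inf_{y \in K} \max_{\norm{u}_2 \le 1} \ip{u, y - x}$ yields
\[
    \dist(x, K) \;=\; \max_{\norm{u}_2 \le 1} \bigl[\,\ip{u, x} - h_K(u)\,\bigr].
\]
Write $F(u) := \ip{u, x} - h_K(u)$; this is a concave function of $u$ whose value and a subgradient $x - \pi(X^*)$ are computable in $O(n^3)$. Observe that $F(0) = 0$, so the maximum is nonnegative; it is strictly positive precisely when $x \notin K$, and any maximizer $u^*$ then defines a separating hyperplane between $x$ and $K$ with gap exactly $\dist(x, K)$.

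The key step is then to run the ellipsoid method on the unit disk in $\R^2$ to maximize $F$, starting from the unit disk as the initial ellipsoid. Because the ambient dimension is two, the diameter is $O(1)$, and $F$ is $O(1)$-Lipschitz (the queried $x$ is confined to the bounding ball of $K \subseteq [-1,1]^2$ by the outer algorithm, and $h_K$ is $\sqrt{2}$-Lipschitz on the unit disk), $O(\log(1/\epsilon))$ iterations suffice to produce $u^\dagger$ with $F(u^\dagger) \ge F^* - \epsilon/2$. Each iteration requires one subgradient evaluation of $F$, i.e., one SVD, costing $O(n^3)$. If $F(u^\dagger) > \epsilon/2$, we return $u^\dagger$ as a separating hyperplane (by construction $\ip{u^\dagger, x} > h_K(u^\dagger) + \epsilon/2$, so $u^\dagger$ separates $x$ from $K$); otherwise $F^* \le \epsilon$, certifying $x \in K + \B_\infty(0, \epsilon)$. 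Total time: $O(n^3 \log(1/\epsilon))$, as stated.

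The principal technical care will be in propagating error constants: confirming that the Lipschitz bound on $F$, the starting radius of the 2D ellipsoid, and the precision threshold all remain $O(1)$ regardless of the calling context, so that the inner iteration count truly grows only as $\log(1/\epsilon)$ and not, say, as $\log(\text{diam}(K)/\epsilon)$ with a hidden dimension-dependent constant. Since the inner problem lives in $\R^2$ and the constants are all controlled by $\norm{A}_{\tr} = \norm{B}_{\tr} = 1$, this should be a routine bookkeeping exercise rather than a genuine obstacle.
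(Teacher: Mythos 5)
Your approach is essentially the paper's: both reduce separation at a query point $x$ to minimizing the margin $\str(\pi^*(y)) - \langle y, x\rangle$ over normalized directions $y$, solve the low-dimensional optimization to additive error $\epsilon$ with $O(\log(1/\epsilon))$ oracle calls each costing one SVD, and then either return a nearly optimal direction as a separating hyperplane or invoke the separating-hyperplane theorem to conclude $x \in \pi(\SO(n)) + \B_\infty(0,\epsilon)$. The only implementation difference is that you normalize by $\|u\|_2 \le 1$ and run the two-dimensional ellipsoid method, while the paper normalizes by $\|y\|_1 = 1$ and runs golden section search on each of the four segments of the $\ell_1$-unit circle; both give the same iteration count.

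There is, however, one point you flag as ``routine bookkeeping'' that is a genuine gap and is \emph{not} controlled by $\|A\|_{\tr} = \|B\|_{\tr} = 1$: the Lipschitz constant of $F(u) = \langle u, x\rangle - h_K(u)$ in $u$ is $\|x\|_2 + O(1)$, so for an unbounded query $x$ the iteration count would degrade to $O(\log(\|x\|/\epsilon))$, not $O(\log(1/\epsilon))$. A weak separation oracle is allowed arbitrary input $x$, so you cannot simply assume $x$ is confined to $[-1,1]^2$ ``by the outer algorithm.'' The paper fixes this with a one-line preliminary check: if $\|x\|_\infty > 1 + \epsilon$, then by Hölder $\pi(\SO(n)) \subseteq [-1,1]^2$, so one of $(\pm 1, 0)$, $(0, \pm 1)$ is immediately a valid separating hyperplane and you terminate. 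You should add the same check to make the Lipschitz bound, and hence the claimed running time, hold uniformly.
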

\begin{proof}
    Suppose we are given $A, B \in \R^{n \times n}$ and $x \in \R^2$.
    If $\|x\|_\infty > 1 + \epsilon$, then in fact, $x \not \in \pi(\SO(n)) + \B_{\infty}(0, \epsilon)$ as, by Holder's inequality,
    \[ X \in \SO(n) \implies \max\{ |\langle A, X \rangle|, |\langle B, X \rangle| \} \leq \|X\|_{op} \max\{ \norm{A}_{\tr}, \norm{B}_{\tr}\} \leq 1 ,   \]
    where the last step was by our assumption $\norm{A}_{\tr} = \norm{B}_{\tr} = 1$. Therefore, in this case, we may immediately terminate with one of $(\pm1,0)$ or $(0,\pm1)$ as a separating hyperplane.
For the remainder, we assume that $\|x\|_\infty \leq 1 + \epsilon$.

    A nonzero vector $y\in\R^2$ defines a separating hyperplane between $x$ and $\change{\pi(\SO(n))}$ if and only if
    \[
        \langle y, x\rangle \geq \max_{X \in \SO(n)} \langle y, \pi(X)\rangle.
    \]
    Recalling the definition of $\str(\cdot)$ from \cref{sec:prelim}, the expression on the right can be written as
    \[
         \max_{X \in \SO(n)} \langle y, \pi(X)\rangle = \max_{X \in \SO(n)} \langle \pi^*(y), X\rangle = \str(\pi^*(y)).
    \]
    \change{
    Thus, a nonzero $y \in \R^{2}$ defines a separating hyperplane if and only if $h(y) \leq 0$.
    Note that by \Cref{lem:strcompute}, we can compute $h(y)$ for a given $y$ using a single SVD. 
    As $h$ is 1-homogeneous, such a $y$ exists if and only if one exists with $\|y\|_1 = 1$.

    Now, we apply \Cref{lem:subroutine} to compute $\widehat{y}$ approximately minimizing $h(y)$ on the unit $\ell_1$ ball.
    }
 
    If $h(\widehat{y}) \leq 0$, then we may output $\widehat y$ as a separating hyperplane.
    For the remainder of the proof, suppose $h(\widehat{y}) > 0$. By \cref{lem:subroutine} and $1$-homogeneity, $h(y) > -\epsilon$ for all $y \in \B_{1}(0,1)$.
    We claim that $x \in \pi(\SO(n)) + \B_{\infty}(0, \epsilon)$.
    If, to the contrary, $x \not \in \pi(\SO(n)) + \B_{\infty}(0, \epsilon)$, then by the separating hyperplane theorem, there would be some $y$ so that
    \[
        \langle y, x \rangle \geq \max \{\langle y, c+\delta\rangle : c \in \pi(\SO(n)), \delta \in \B_{\infty}(0, \epsilon)\} = 
        \max \{\langle y, c\rangle : c \in \pi(\SO(n))\} + \epsilon \|y\|_1.
    \]
    In particular, there would be some $y$ with $\|y\|_1 = 1$ such that
    \[
        h(y) = \str(\pi^*(y)) - \langle y, x\rangle \leq -\epsilon,
    \]
    which is a contradiction.\ije{\qedhere}
\end{proof}

\section{Optimization on $\SO(n)$ and $\O(n)$ with strict upper triangular constraints}
\label{sec:sut_constraints}

In this section, we consider optimization problems with strict upper triangular (SUT) constraints over $\SO(n)$ and $\O(n)$: Let $A\in\R^{n\by n}$ and let $\cC$ be a nonempty closed convex subset of $\pi_\sut(\Bop(n))$. We consider the problems (which we also stated in the introduction)
\begin{align}
    &\sup_{X\in\SO(n)}\set{\ip{A,X}:\, \pi_\sut(X)\in \cC} \tag{\ref{eq:opt_over_so_with_sut}}\\
    &\qquad\leq \sup_{X\in\O(n)}\set{\ip{A,X}:\, \pi_\sut(X)\in \cC}\tag{\ref{eq:opt_over_o_with_sut}}\\
    &\qquad\leq 
    \max_{X\in\Bop(n)}\set{\ip{A,X}:\, \pi_\sut(X)\in \cC}. \tag{\ref{eq:opt_over_bop_with_sut}}
\end{align}
where the last inequality is because $\Bop(n)$ is the convex hull of $\O(n)$.
Here, we define the values of \labelcref{eq:opt_over_so_with_sut,eq:opt_over_o_with_sut} to be $-\infty$ whenever they are infeasible. 
\change{Since \Cref{eq:opt_over_bop_with_sut} is always feasible (by our assumptions on $\cC$), and $\Bop(n)$ is compact, the supremum is always acheived in this last case.}
Nonetheless, by compactness, \labelcref{eq:opt_over_so_with_sut,eq:opt_over_o_with_sut} both achieve their maxima as long as they are feasible.

The following theorem is the main result of this section and shows that one or both of these inequalities hold at equality for certain choices of $A$.

\thmoptsosut*
We will prove \cref{thm:opt_so_with_sut} in \cref{subsec:proof_opt_so_with_sut}. As a byproduct of the proof, we will also see a numerical method for constructing optimizers of \eqref{eq:opt_over_so_with_sut} or \eqref{eq:opt_over_o_with_sut} from an optimizer of the convex program \eqref{eq:opt_over_bop_with_sut} by solving an SDP.
In \cref{subsec:rank_one_coordinate}, we verify that our results in this section can be applied to problems with few coordinate constraints or rank-one constraints.

Before moving on, we note two hidden convexity properties implied by \cref{thm:opt_so_with_sut}.

\thmfeasibility*
\begin{proof}
It is clear that $\pi_\sut(\SO(n))\subseteq\pi_\sut(\O(n))\subseteq\pi_\sut(\Bop(n))$. Now, let $\sigma\in\pi_\sut(\Bop(n))$ and set $A = I$ and $\cC=\set{\sigma}$. \cref{thm:opt_so_with_sut} implies the feasibility of \eqref{eq:opt_over_so_with_sut}, i.e., $\sigma\in\pi_\sut(\SO(n))$, whence $\pi_\sut(\Bop(n))\subseteq\pi_\sut(\SO(n))$.\ije{\qedhere}
\end{proof}
\begin{corollary}
\label{thm:hidden_convexity_so_with_sut}
Let $\change{A}\in\R^{n\by n}$ be a diagonal matrix, then
\begin{align*}
    \set{\begin{pmatrix}
    \ip{A,X} - \gamma\\
    \pi_\sut(X)
    \end{pmatrix}:\, \begin{array}{l}
        X\in\O(n)\\
        \gamma \geq 0
    \end{array}} = 
    \set{\begin{pmatrix}
    \ip{A,X} - \gamma\\
    \pi_\sut(X)
    \end{pmatrix}:\,
    \begin{array}{l}
        X\in\Bop(n)\\
        \gamma \geq 0
    \end{array}}.
\end{align*}
If additionally $\det(A)\geq 0$, then
\begin{align*}
    \set{\begin{pmatrix}
        \ip{A,X} - \gamma\\
        \pi_\sut(X)
        \end{pmatrix}:\, \begin{array}{l}
            X\in\SO(n)\\
            \gamma \geq 0
        \end{array}} = 
        \set{\begin{pmatrix}
        \ip{A,X} - \gamma\\
        \pi_\sut(X)
        \end{pmatrix}:\,
        \begin{array}{l}
            X\in\Bop(n)\\
            \gamma \geq 0
        \end{array}}.
        \end{align*}
\end{corollary}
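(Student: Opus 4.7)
The plan is to deduce this corollary directly from Theorem \ref{thm:opt_so_with_sut} by applying it with a singleton convex set. The forward inclusion $\subseteq$ is immediate in both equalities, since $\SO(n) \subseteq \O(n) \subseteq \Bop(n)$ and taking the same $X$ and $\gamma$ on each side works. The content lies entirely in the reverse inclusion $\supseteq$.

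For the reverse inclusion, suppose we are given $X \in \Bop(n)$ and $\gamma \geq 0$, and set $\sigma \coloneqq \pi_\sut(X)$. Then $\sigma \in \pi_\sut(\Bop(n))$, so $\cC \coloneqq \{\sigma\}$ is a nonempty closed convex subset of $\pi_\sut(\Bop(n))$, to which Theorem \ref{thm:opt_so_with_sut} can be applied. Moreover, by Corollary \ref{thm:feasibility} (just established), we also have $\sigma \in \pi_\sut(\O(n))$ and, since $\det(A) \geq 0$ in the second case, $\sigma \in \pi_\sut(\SO(n))$, so the relevant maximization problems are feasible. Compactness of $\O(n)$ and $\SO(n)$ then ensures the suprema are attained.

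For the first equality, Theorem \ref{thm:opt_so_with_sut} gives an $X' \in \O(n)$ with $\pi_\sut(X') = \sigma$ and $\ip{A, X'} \geq \ip{A, X}$, since the right-hand side of the equality between \eqref{eq:opt_over_o_with_sut} and \eqref{eq:opt_over_bop_with_sut} is at least $\ip{A,X}$ (as $X$ is feasible for \eqref{eq:opt_over_bop_with_sut}). Setting $\gamma' \coloneqq \gamma + \ip{A, X'} - \ip{A, X} \geq \gamma \geq 0$, the pair $(X', \gamma')$ lies in the left-hand set and satisfies $\ip{A, X'} - \gamma' = \ip{A, X} - \gamma$ and $\pi_\sut(X') = \pi_\sut(X)$, as required. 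For the second equality, we argue identically using the equality between \eqref{eq:opt_over_so_with_sut} and \eqref{eq:opt_over_bop_with_sut} (which needs $\det(A) \geq 0$) to produce $X' \in \SO(n)$ in place of $\O(n)$.

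The only subtle points — feasibility and attainment of the maximization problems — are handled by the already-established Corollary \ref{thm:feasibility} and compactness, so the proof amounts to a single paragraph of bookkeeping after applying Theorem \ref{thm:opt_so_with_sut}. There is no real obstacle here; the corollary is essentially an epigraph-style reformulation of the preceding theorem, and the extra slack variable $\gamma \geq 0$ is precisely what lets us absorb the gain $\ip{A, X'} - \ip{A, X} \geq 0$ in objective value when passing from a point of $\Bop(n)$ to one of $\O(n)$ or $\SO(n)$ with the same SUT entries.
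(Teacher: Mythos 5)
Your proof is correct and follows essentially the same route as the paper's: both establish the reverse inclusion by applying Theorem~\ref{thm:opt_so_with_sut} with the singleton set $\cC = \{\sigma\}$ and then using the slack variable $\gamma$ to absorb the gain $\ip{A,X'} - \ip{A,X} \ge 0$ in the first coordinate. The paper phrases this as the left-hand set being ``closed downwards,'' while you make the same point explicit by defining $\gamma' = \gamma + \ip{A,X'} - \ip{A,X}$; the two formulations are equivalent.
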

\begin{proof}
We prove only the first claim as the second is proved analogously.
For convenience, let $\cL$ and $\cR$ denote the left- and right-hand side sets in the first claim.
As $\O(n)\subseteq\Bop(n)$, we have that $\cL\subseteq\cR$. Now, suppose $(v,\sigma)\in\cR$, so there exists $X \in \Bop(n)$ such that $\pi_{\sut}(X) = \sigma, \langle A, X \rangle \geq v$. Let
\begin{align*}
    v' = \max_{X\in\Bop(n)}\set{\ip{A,X}:\, \pi_\sut(X) = \sigma}.
\end{align*}
By definition, $v'\geq v$. Next, by \cref{thm:opt_so_with_sut}, there exists $X\in\O(n)$ such that $\pi_\sut(X) = \sigma$ and $\ip{A,X} = v'$. Then $(v',\sigma)\in\cL$.
\change{As $\cL$ is closed under decreasing its first coordinate}, $(v,\sigma)\in\cL$. As $(v,\sigma)\in\cR$ was arbitrary, we conclude $\cR\subseteq\cL$.\ije{\qedhere}
\end{proof}

\subsection{Proof of \cref{thm:opt_so_with_sut}}
\label{subsec:proof_opt_so_with_sut}

We begin by proving the following special case of \cref{thm:opt_so_with_sut}.
\begin{proposition}
\label{prop:opt_so_with_sut_generic}
Let $A\in\R^{n\by n}$ be a diagonal matrix with $\det(A)\neq 0$ and let $\sigma\in\inter(\pi_\sut(\Bop(n)))$. Then,
\begin{align}
\label{eq:opt_bop_generic}
\max_{X\in\Bop(n)}\set{\ip{A,X}:\, \pi_\sut(X)=\sigma}
\end{align}
has a unique optimizer $\hat X$. It holds that $\hat X\in\O(n)$. If additionally $\det(A)>0$, then $\hat X\in\SO(n)$.
\end{proposition}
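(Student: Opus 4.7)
The plan is to use Lagrangian duality on the equality constraints $\pi_\sut(X) = \sigma$, with the multiplier encoded as a strictly upper triangular matrix $M \in \R^{n\times n}$. Letting $S$ denote the SUT matrix satisfying $\pi_\sut(S) = \sigma$, the Lagrangian is $\ip{A - M, X} + \ip{M, S}$. Maximizing over $X \in \Bop(n)$, and using that the dual norm of $\norm{\cdot}_{\textup{op}}$ is $\norm{\cdot}_{\tr}$, yields the dual program
\[
    \min_{M\text{ SUT}} \norm{A - M}_{\tr} + \ip{M, S}.
\]
Since $\sigma \in \inter(\pi_\sut(\Bop(n)))$, the standard fact that the interior of the image of a convex body under a surjective linear map equals the image of the interior provides some $X_0 \in \inter(\Bop(n))$ with $\pi_\sut(X_0) = \sigma$. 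This is Slater's condition, so strong duality holds and the dual attains its optimum at some $\hat M$. Any primal optimum $\hat X$ must then satisfy $\hat X \in \argmax_{X \in \Bop(n)} \ip{A - \hat M, X}$ together with $\pi_\sut(\hat X) = \sigma$.

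The key structural observation is that because $A$ is diagonal and $\hat M$ is strictly upper triangular, $A - \hat M$ is upper triangular with diagonal entries $A_{11}, \dots, A_{nn}$. Since $\det(A) = \prod_i A_{ii} \neq 0$, the matrix $A - \hat M$ is invertible, and in its SVD $A - \hat M = U \Lambda V^\intercal$ all singular values are strictly positive.

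Next, for any invertible matrix $C = U \Lambda V^\intercal$, I would show that the unique maximizer of $\ip{C, X}$ over $\Bop(n)$ is $U V^\intercal \in \O(n)$: substituting $Y = U^\intercal X V$ reduces the problem to maximizing $\sum_i \Lambda_{ii} Y_{ii}$ subject to $\norm{Y}_{\textup{op}} \leq 1$; since each $\Lambda_{ii} > 0$ and $|Y_{ii}| \leq 1$, optimality forces $Y_{ii} = 1$ for all $i$, which in turn forces $Y = I$. Hence $\hat X = U V^\intercal$ is the unique primal optimum and lies in $\O(n)$.

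For the $\SO(n)$ claim under $\det(A) > 0$, I would compare determinants: $\det(\hat X) = \det(U)\det(V)$ and $\det(A - \hat M) = \det(A) = \det(U) \det(\Lambda) \det(V)$. Since $\det(\Lambda) > 0$, we obtain $\sign(\det(\hat X)) = \sign(\det(A)) > 0$, and because $\hat X \in \O(n)$ forces $\det(\hat X) \in \{\pm 1\}$, we conclude $\hat X \in \SO(n)$. The main subtlety lies in establishing Slater's condition to invoke strong duality; once the dual optimum exists, the triangular structure of $A - \hat M$ immediately delivers invertibility, uniqueness, and orthogonality of $\hat X$.
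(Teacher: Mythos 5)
Your proof is correct and follows essentially the same route as the paper's: both invoke Slater's condition via $\inter(\pi_\sut(\Bop(n))) = \pi_\sut(\inter(\Bop(n)))$ to get strong duality, observe that the dual certificate $A - \hat M$ (the paper writes $\hat Y = A - \pi_\sut^*(\hat\lambda)$, the same matrix) is upper triangular with $\diag(A)$ on its diagonal hence invertible, and then extract $\hat X = UV^\intercal$ from the SVD and compare determinants. The only cosmetic difference is that you encode the multiplier directly as a SUT matrix $M$ rather than as $\lambda\in\R^{\binom{n}{2}}$ with the adjoint $\pi_\sut^*$.
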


\begin{proof}
First, note that $\Bop(n)$ is full-dimensional in $\R^{n \times n}$ so its projection $\pi_\sut(\Bop(n))$ is also full-dimensional. Thus, $\inter(\pi_\sut(\Bop(n))) = \pi_\sut(\inter(\Bop(n)))$ and \eqref{eq:opt_bop_generic} is strictly feasible. We deduce that strong duality and dual attainability hold in the following primal and dual problems
\begin{align*}
    &\max_{X\in\Bop(n)}\set{\ip{A,X}:\, \pi_\sut(X) = \sigma}\\
    &\qquad=\min_{Y\in\R^{n\by n},\,\lambda\in\R^{\binom{n}{2}}}\set{\ip{\sigma,\lambda} + \norm{Y}_{\tr} :\, Y + \pi_\sut^*(\lambda) = A}.
\end{align*}

Now, let $(\hat Y,\hat\lambda)$ optimize the dual problem. Note that $\hat Y = A - \pi^*_\sut(\hat\lambda)$ is upper triangular with $\diag(A)$ on its diagonal, so $\det(\hat{Y}) = \det(A) \neq 0$ by assumption, i.e., $\rank(\hat Y) = n$.

Let $\hat X\in\Bop(n)$ be an arbitrary maximizer of \eqref{eq:opt_bop_generic}, which exists by compactness of $\Bop(n)$. By strong duality,
\begin{align*}
    \norm{\hat Y}_{\tr} + \ip{\sigma,\hat\lambda} = \ip{A,\hat X} = \ip{\hat Y + \pi_\sut^*(\hat\lambda), \hat X} = \ip{\hat Y,\hat X} +\ip{\sigma,\hat\lambda}.
\end{align*}
Thus, $\norm{\hat Y}_{\tr} = \ip{\hat Y,\hat X}$. Let $U\Sigma V^\intercal = \hat Y$ be an SVD of $\hat Y$. Then, $\tr(\Sigma) = \norm{\hat Y}_{\tr} = \ip{\hat Y, \hat X} = \ip{\Sigma, U^\intercal \hat X V}$. Noting that $U^\intercal \hat X V\in \Bop(n)$ and that $\Sigma$ has only positive diagonal entries, we deduce that $U^\intercal \hat X V = I$ so that $\hat X = UV^\intercal\in \O(n)$. This proves the first claim.

Now, suppose $\det(A) > 0$. Then, $\det(\hat Y) = \det(A - \pi^*_\sut(\hat\lambda)) = \det(A) >0$. Thus, $\det(\hat X) = \det(U)\det(V^\intercal) = \frac{\det(\hat{Y})}{\det(\Sigma)}>0$. We conclude that $\hat X\in\SO(n)$.\ije{\qedhere}
\end{proof}

We may now prove \cref{thm:opt_so_with_sut} in full generality.
\begin{proof}
[Proof of \cref{thm:opt_so_with_sut}]
Let $\hat X$ be an optimizer of \eqref{eq:opt_over_bop_with_sut} and set $\sigma = \pi_\sut(\hat X)$.
Now, let $\epsilon\in(0,1]$ and define $\sigma_\epsilon \coloneqq (1-\epsilon)\sigma$.
\change{As $\Bop(n)$ is full-dimensional, we have that $\sigma_\epsilon\in \pi_{\sut}(\inter(\Bop(n)))$.}
If $\det(A)\neq 0$, then define $A_\epsilon \coloneqq A$. Otherwise, construct $A_\epsilon\in\R^{n\by n}$ by setting each zero diagonal entry of $A$ to $\pm\tfrac{\epsilon}{n}$ in such a way that $\det(A_\epsilon) > 0$, \change{which is possible because $A$ is diagonal and its determinant is the product of its diagonal entries}.
Then, applying \cref{prop:opt_so_with_sut_generic} with $A_\epsilon$ and $\sigma_\epsilon$, there exists $X_\epsilon\in\O(n)$ satisfying
\begin{align}
    \label{eq:X_epsilon_objective_value}
    \ip{A,X_\epsilon} \geq \ip{A_\epsilon,X_\epsilon} - \epsilon \geq (1-\epsilon) \ip{A_\epsilon, \hat X} - \epsilon  \geq (1-\epsilon)\left(\ip{A,\hat X} - \epsilon\right) - \epsilon , 
\end{align}
\change{
where the first and third steps are because $\|A - A_{\epsilon}\|_{\tr} \leq \epsilon$ by our choice of $A_{\epsilon}$ and the fact that $\hat X,X_\epsilon \in \Bop(n)$, and the second step follows as $(1-\epsilon)\hat X$ is feasible in
\begin{align*}
    \max_{X\in\Bop(n)}\set{\ip{A_\epsilon, X}:\, \pi_\sut(X) = \sigma_\epsilon} = \ip{A_\epsilon,X_\epsilon}.
\end{align*}

}

Next, consider a sequence $\set{\epsilon_k}\subseteq(0,1]$ converging to zero and the corresponding sequence $\set{X_{\epsilon_k}}\subseteq \O(n)$. As $\O(n)$ is compact, $\set{X_{\epsilon_k}}$ has a subsequential limit $\tilde X\in\O(n)$. By continuity, we have that $\ip{A,\tilde X} \geq \ip{A,\hat X}$ and $\pi_\sut(\tilde X) = \sigma \in \cC$. We deduce that equality holds between \eqref{eq:opt_over_o_with_sut} and \eqref{eq:opt_over_bop_with_sut}.

Finally, suppose $\det(A)\geq 0$ so that $\det(A_\epsilon)>0$. Then, the sequence $\set{X_{\epsilon_k}}$ lies in $\SO(n)$ so that the subsequential limit $\tilde X$ may also be taken to live in $\SO(n)$.\ije{\qedhere}
\end{proof}

\begin{remark}
\label{rem:recover_so_n_opt}
The proof of \cref{thm:opt_so_with_sut} suggests a numerical method for recovering an optimizer of \eqref{eq:opt_over_o_with_sut} from an optimizer, $\hat X$, of \eqref{eq:opt_over_bop_with_sut}. Let $\epsilon>0$ be some small numerical parameter and let $A_\epsilon,\, \sigma_\epsilon$ be as defined in the proof of \cref{thm:opt_so_with_sut}. Then, the unique maximizer of
\begin{align*}
    \max_{X\in\Bop(n)}\set{\ip{A_\epsilon,X}:\, \pi_\sut(X) = \sigma_\epsilon}
\end{align*}
is guaranteed to lie in $\O(n)$ and is \change{approximately optimal in the sense of \eqref{eq:X_epsilon_objective_value} and and approximately feasible as $\sigma_\epsilon\approx\sigma\in\cC$.}

Alternatively, we may shortcut solving two separate convex optimization problems by preemptively replacing $A$ and $\cC$ with $A_\epsilon$ and $\cC_\epsilon$, in such a way that guarantees $\det(A_\epsilon)\neq 0$ and $\cC_\epsilon\subseteq\inter(\pi_\sut(\Bop(n)))$. With these perturbed sets, \cref{prop:opt_so_with_sut_generic} guarantees that any optimizer of
\begin{align*}
    \max_{X\in\Bop(n)}\set{\ip{A_\epsilon,X}:\, \pi_\sut(X) \in \cC_\epsilon}
\end{align*}
lies in $\O(n)$. Analogous statements hold for the $\SO(n)$ setting.
\end{remark}

\subsection{Applications to low-rank constraints}
\label{subsec:rank_one_coordinate}
The following proposition shows that optimization and feasibility problems over $\SO(n)$ with convex constraints on the values of $\ip{B_i,X}$, where $B_i\in\R^{n\by n}$ are low-rank matrices, can be seen as a special case of SUT constraints after a reparameterization of $\SO(n)$.
\begin{proposition}
    \label{prop:low_rank}
    Let $k \le n-1$.
    Fix $u_1, \dots, u_k \in \R^n$, and also fix $v_1, \dots, v_k \in \R^n$. 
    For $i = 1, \dots, m$, let $B_i = \sum_{j=1}^{k} \beta_{ij} u_{j}v_{j}^\intercal$, for some $\beta_{i,j} \in \R$.
    Let $\cB(X)\coloneqq \begin{pmatrix}
        \ip{B_i,X} \end{pmatrix}_{i\in[m]}$.
    Then $\cB(\SO(n))$ is convex.

\end{proposition}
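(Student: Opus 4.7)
The plan is to reduce this to the convexity of $\pi_\sut(\SO(n))$, which we already have in hand via \cref{thm:feasibility}. The idea is that since each $B_i$ lies in the span of the rank-one matrices $u_j v_j^\intercal$ for $j \in [k]$, a suitable change of variables of the form $X = P^\intercal Y Q$ with $P, Q \in \SO(n)$ will turn every $u_j v_j^\intercal$ into a strictly upper triangular matrix, and hence make $\cB(X)$ depend only on $\pi_\sut(Y)$.

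Concretely, I will first construct $P \in \SO(n)$ such that $P u_j \in \spann\{e_1, \dots, e_j\}$ for every $j \in [k]$. This is a standard Gram--Schmidt style construction: apply Gram--Schmidt to $u_1, u_2, \dots, u_k$ and extend to an orthonormal basis of $\R^n$, then take $P$ to be the matrix whose $j$-th row is the $j$-th basis vector (with a sign flip on one of the last $n-k \geq 1$ rows if needed to achieve $\det(P) = 1$). Next I will construct $Q \in \SO(n)$ such that $Q v_j \in \spann\{e_{j+1}, \dots, e_n\}$ for every $j \in [k]$; equivalently, the first $j$ rows $q_1, \dots, q_j$ of $Q$ are orthogonal to $v_j$. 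This too can be built row-by-row: choose $q_i$ to be a unit vector in $\spann\{v_i, v_{i+1}, \dots, v_k, q_1, \dots, q_{i-1}\}^\perp$, which has dimension at least $n - k \geq 1$ at each step, then complete to an orthonormal basis and again use the remaining $n-k \geq 1$ free rows to enforce $\det(Q) = 1$. The key resource being spent at both places is the slack $k \leq n-1$.

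With $P$ and $Q$ in hand, for each $j \in [k]$ the matrix $(Pu_j)(Qv_j)^\intercal$ has a nonzero $(a,b)$ entry only when $a \leq j < b$, so it is strictly upper triangular. Under the bijection $\SO(n) \to \SO(n)$ given by $X = P^\intercal Y Q$, we get
\begin{equation*}
    \ip{B_i, X} = \sum_{j=1}^k \beta_{ij} \ip{(Pu_j)(Qv_j)^\intercal, Y},
\end{equation*}
which is a fixed linear function $L_i$ of $\pi_\sut(Y)$ alone. Therefore $\cB(\SO(n)) = L(\pi_\sut(\SO(n)))$ where $L = (L_1, \dots, L_m)$ is linear. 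By \cref{thm:feasibility}, $\pi_\sut(\SO(n))$ is convex, and linear images of convex sets are convex, so $\cB(\SO(n))$ is convex.

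The only nonroutine piece is the simultaneous construction of $P$ and $Q$ inside $\SO(n)$ (rather than merely $\O(n)$) that makes every $u_j v_j^\intercal$ strictly upper triangular at once; this is exactly where the hypothesis $k \leq n-1$ is used twice, once to ensure the Gram--Schmidt step has room and once to ensure a spare basis vector is available to fix the determinant.
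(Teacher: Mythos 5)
Your proposal is correct and takes essentially the same approach as the paper: the paper isolates the simultaneous triangularization step as \cref{prop:rank_one} (constructing $U,V\in\SO(n)$ via QR decompositions so that each $u_jv_j^\intercal$ becomes strictly upper triangular after conjugation) and then reduces to the convexity of $\pi_\sut(\SO(n))$ given by \cref{thm:feasibility}, exactly as you do. The only difference is stylistic: you inline the construction and spell out the Gram--Schmidt and sign-fix details more explicitly than the paper's ``it follows from the existence of QR decompositions.''
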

\cref{thm:opt_so_with_sut} then implies that we may decide feasibility of problems of the form $\cB(\SO(n))\cap \cC$ for compact convex $\cC$ via a simple SDP.
As an example, \cref{prop:low_rank} applies if $m\leq n -1$ and $B_i$ are each rank one as in the situation of \cref{subsec:motivation}.

In order to show this proposition, we will need a lemma.

\begin{lemma}
    \label{prop:rank_one}
Let $\set{u_1,\dots, u_{n-1}}\subseteq\R^n$ and $\set{v_1,\dots, v_{n-1}}\subseteq\R^n$. Then, there exists $U,V\in\SO(n)$ such that for all $i\in\change{[n-1]}$, $v_i^\intercal (V^\intercal X U) u_i$ depends only on the strict upper triangular entries of $X$.
\end{lemma}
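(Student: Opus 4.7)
The plan is to construct $U, V \in \SO(n)$ such that for each $i \in [n-1]$, $V v_i \in \spann(e_1, \dots, e_i)$ and $U u_i \in \spann(e_{i+1}, \dots, e_n)$. Once this is achieved, the identity
\[
v_i^\intercal V^\intercal X U u_i = (V v_i)^\intercal X (U u_i) = \ip{(V v_i)(U u_i)^\intercal, X}
\]
shows that the coefficient matrix of the linear functional $X \mapsto v_i^\intercal V^\intercal X U u_i$ is the rank-one matrix $(V v_i)(U u_i)^\intercal$. Its $(j,k)$ entry equals $(Vv_i)_j (Uu_i)_k$, which vanishes unless $j \leq i$ and $k \geq i+1$, and hence unless $j < k$. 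So the coefficient matrix is strictly upper triangular, and the functional depends only on the strict upper triangular entries of $X$, as required.

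To construct $V$, I would apply the standard Gram--Schmidt procedure to $v_1, \dots, v_{n-1}$, using the convention that whenever the next projection vanishes (i.e., $v_i$ already lies in the span of the previously selected vectors), we pick any unit vector in the orthogonal complement. This produces orthonormal vectors $c_1, \dots, c_{n-1}$ with $v_i \in \spann(c_1, \dots, c_i)$ for each $i$. I then select $c_n$ to be a unit vector orthogonal to $\spann(c_1, \dots, c_{n-1})$, and let $V$ be the matrix with rows $c_1^\intercal, \dots, c_n^\intercal$. The sign of $c_n$ is chosen so that $\det(V) = +1$, guaranteeing $V \in \SO(n)$. Since $V c_k = e_k$ by construction, the containment $v_i \in \spann(c_1, \dots, c_i)$ yields $V v_i \in \spann(e_1, \dots, e_i)$ as desired.

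The construction of $U$ is symmetric but proceeds in reverse: I would apply Gram--Schmidt to the reversed list $u_{n-1}, u_{n-2}, \dots, u_1$ to obtain orthonormal $\tilde d_1, \dots, \tilde d_{n-1}$ with $u_{n-j} \in \spann(\tilde d_1, \dots, \tilde d_j)$. After completing to an orthonormal basis with a vector $\tilde d_n$, I would define $U$ to be the orthogonal matrix whose $k$-th row is $\tilde d_{n+1-k}^\intercal$, with the sign of $\tilde d_n$ chosen so that $\det(U) = +1$. Then the first $i$ rows of $U$ are $\tilde d_n^\intercal, \dots, \tilde d_{n-i+1}^\intercal$, each of which is orthogonal to $u_i \in \spann(\tilde d_1, \dots, \tilde d_{n-i})$. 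Hence $(U u_i)_k = 0$ for $k \leq i$, i.e., $U u_i \in \spann(e_{i+1}, \dots, e_n)$.

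The only subtlety to worry about is the possibility of linearly dependent input vectors, but this is cleanly handled by the standard padding step in Gram--Schmidt. The sign degree of freedom at the final step of each construction provides exactly enough flexibility to land in $\SO(n)$ rather than in $\O(n)\setminus \SO(n)$. As a sanity check, the constraint counts balance: the conditions on $V$ amount to $\sum_{i=1}^{n-1}(n-i) = \binom{n}{2}$ linear equations, matching the dimension of $\SO(n)$, and similarly for $U$.
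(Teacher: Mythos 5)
Your proof is correct and follows essentially the same route as the paper, which also invokes QR/Gram--Schmidt to triangularize the two families in opposite directions so that each rank-one coefficient matrix $(Vv_i)(Uu_i)^\intercal$ is strictly upper triangular. Your write-up is just more explicit about the padding for linearly dependent inputs and the sign choice ensuring $\det = +1$ (and it keeps the row/column orientation consistent with the expression $v_i^\intercal(V^\intercal X U)u_i$, which the paper's sketch states with the roles of $u_i$ and $v_i$ transposed).
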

\begin{proof}
It follows from the existence of QR decompositions that we may upper triangularize the $\set{u_i}$ with a special orthogonal matrix, i.e., there exists $U\in\SO(n)$ such that $\textrm{supp}(U u_i)\subseteq [1,i]$ for each \change{$i \in [n-1]$}.
Similarly, we may lower triangularize the $\set{v_i}$ with a special orthogonal matrix, i.e., there exists $V\in\SO(n)$ such that $\textrm{supp}(V v_i)\subseteq[i + 1, n]$.
Then
\begin{align*}
    \textrm{supp}(Uu_i v_i^\intercal V^\intercal) \subseteq [1,i]\times [i+1,n] \change{ \text{ for }i \in [n-1]}.
\end{align*}
Thus $(Vv_i)^\intercal X (Uu_i)$ depends only the strictly upper triangular entries of $X$.
\end{proof}

We now show \cref{prop:low_rank}.
\begin{proof}[Proof of \cref{prop:low_rank}]
Note that $\cB(\SO(n))$ is a 
linear image of the set \begin{align}
    \label{eq:ro_example_2}
    \set{\begin{pmatrix}
        v_j^\intercal Xu_j \end{pmatrix}_{j\in[k]}:\, X\in\SO(n)}.
\end{align}
Let $U,V\in\SO(n)$ denote the matrices guaranteed by \cref{prop:rank_one}, then \eqref{eq:ro_example_2} is equivalent to
\begin{align}
    \label{eq:ro_example_3}
    \set{\begin{pmatrix}
        v_j^\intercal (V^\intercal XU)u_j \end{pmatrix}_{j\in[k]}:\, X\in\SO(n)}
\end{align}
by the fact that $\SO(n)= V^\intercal \SO(n)U$.
Finally, by the assumed properties of $U$ and $V$, we have that \eqref{eq:ro_example_3} is a linear image of $\pi_\sut(\SO(n))$ so that $\cB(\SO(n))$ is convex.
\end{proof}

\section{Explicit constructions for elements of $\SO(n)$ with fixed strictly upper triangular entries}
\label{sec:utconstructions}

This section gives full characterizations and explicit constructions for $\pi_\sut^{-1}(\sigma)\cap \SO(n)$ and $\pi_\sut^{-1}(\sigma)\cap \O(n)$, where $\sut$ is the strictly upper triangular coordinates in $\R^{n \times n}$, for $\sigma\in\inter(\pi_\sut(\Bop(n)))$. This will allow us to extend \cref{thm:opt_so_with_sut} to an approximation result in the remaining setting $\det(C)<0$.

We overload notation below. Given $A\in\S^n_+$, let
\begin{align*}
    \O(A) &\coloneqq \set{X\in\R^{n\by n}:\, X^\intercal X = A}\\
    \Bop(A) &\coloneqq \set{X\in\R^{n\by n}:\, X^\intercal X \preceq A}.
\end{align*}
Note that $\O(I_n) = \O(n)$ and $\Bop(I_n) = \Bop(n)$. Furthermore, if $A\in\S^n_{++}$, then
\begin{align*}
    \inter(\Bop(A)) = \set{X\in\R^{n\by n}:\, X^\intercal X \prec A}
\end{align*}
is full-dimensional. Thus, $\inter(\pi_\sut(\Bop(A))) = \pi_\sut(\inter(\Bop(A)))$.

We will require the following technical lemma.
\begin{lemma}
    \label{lem:complete_OA_from_submatrix}
Let $A\in\S^n_{++}$ and $\tilde U\in\R^{n\times (n-1)}$.
Suppose $\tilde U^\intercal \tilde U = A_{2,2}$, the bottom right $(n-1)\by (n-1)$ submatrix of $A$.
Suppose also that the bottom $(n-1)\by(n-1)$ submatrix of $\tilde U$ has full rank.
Then, there exist exactly two choices of $u\in \R^n$ such that
\begin{align*}
    U = \begin{pmatrix}
        u & \tilde U
        \end{pmatrix}\in \O(A)
\end{align*}
and the two choices of $u$ differ on their first coordinates. Furthermore, given $A_{2,2}^{-1}$, the two choices of $u$ can be computed in $O(n^2)$ time.
\end{lemma}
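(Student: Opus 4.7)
The plan is to decompose the unknown column $u \in \R^n$ into components parallel and orthogonal to $\col(\tilde U)$ and then count solutions via the Schur complement of $A$. Writing $A$ in block form with $A_{1,1} \in \R$, $A_{2,1} \in \R^{n-1}$, $A_{2,2} \in \S^{n-1}_{++}$, the condition $U^\intercal U = A$ for $U = \begin{pmatrix} u & \tilde U \end{pmatrix}$ is equivalent to three conditions: $\tilde U^\intercal \tilde U = A_{2,2}$ (given), the linear equation $\tilde U^\intercal u = A_{2,1}$, and the norm equation $u^\intercal u = A_{1,1}$. Since $\tilde U$ contains an invertible $(n-1) \by (n-1)$ submatrix by hypothesis, it has rank $n-1$, so $\ker(\tilde U^\intercal)$ is one-dimensional.

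Writing $u = u_\parallel + u_\perp$ with $u_\parallel \in \col(\tilde U)$ and $u_\perp \in \ker(\tilde U^\intercal)$, the linear equation forces $u_\parallel = \tilde U A_{2,2}^{-1} A_{2,1}$ uniquely (this uses that $\tilde U A_{2,2}^{-1} \tilde U^\intercal$ is the orthogonal projector onto $\col(\tilde U)$, which follows from $\tilde U^\intercal \tilde U = A_{2,2}$). The norm equation reduces to $\|u_\perp\|^2 = A_{1,1} - A_{1,2} A_{2,2}^{-1} A_{2,1}$, which is exactly the Schur complement of $A_{2,2}$ in $A$ and is strictly positive because $A \succ 0$. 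Fixing any unit vector $w$ spanning $\ker(\tilde U^\intercal)$, we obtain exactly two solutions $u = u_\parallel \pm \sqrt{A_{1,1} - A_{1,2} A_{2,2}^{-1} A_{2,1}}\, w$. To see these differ in the first coordinate, I would argue by contradiction: if $w_1 = 0$ so that $w = (0, \tilde w)^\intercal$, then $\tilde U^\intercal w = 0$ becomes $\tilde U_{\text{bottom}}^\intercal \tilde w = 0$, which forces $\tilde w = 0$ by full rank of $\tilde U_{\text{bottom}}$, contradicting $w \neq 0$.

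For the $O(n^2)$ runtime, given $A_{2,2}^{-1}$, compute $v = A_{2,2}^{-1} A_{2,1}$ and $u_\parallel = \tilde U v$ by matrix-vector multiplications in $O(n^2)$, and the Schur complement scalar $A_{1,1} - A_{2,1}^\intercal v$ in $O(n)$. To construct $w$, take $w$ proportional to $e_1 - \tilde U A_{2,2}^{-1} \tilde U^\intercal e_1$, the residual of $e_1$ after projection onto $\col(\tilde U)$, also computable in $O(n^2)$; this is nonzero because the same argument just used (applied with $e_1$) shows $e_1 \notin \col(\tilde U)$. I do not anticipate a real obstacle here: the proof is essentially a Gram–Schmidt / block Cholesky one-column completion, and the full-rank hypothesis on $\tilde U_{\text{bottom}}$ is deployed in exactly two places, namely to ensure $\dim \ker(\tilde U^\intercal) = 1$ and to guarantee that the ambiguity sits in the first coordinate.
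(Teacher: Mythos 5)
Your proof is correct and follows essentially the same strategy as the paper's: reduce $U \in \O(A)$ to the linear equation $\tilde U^\intercal u = A_{2,1}$ plus the norm equation $\|u\|^2 = A_{1,1}$, parameterize solutions of the linear equation as a particular solution (in $\col(\tilde U)$) plus a multiple of the one-dimensional kernel of $\tilde U^\intercal$, and use strict positivity of the Schur complement $A_{1,1} - A_{1,2}A_{2,2}^{-1}A_{2,1}$ to conclude the resulting scalar quadratic has exactly two roots. The only cosmetic differences are in how the kernel vector and the first-coordinate claim are handled: the paper writes $\tilde U^\intercal = \begin{pmatrix}\hat u & \hat U^\intercal\end{pmatrix}$ and exhibits the kernel vector as $z = (1,\,-\hat U^{-\intercal}\hat u)^\intercal$, which makes $z_1 = 1$ immediate and is then computed via Sherman--Morrison; you instead take a unit kernel vector $w$, argue $w_1 \neq 0$ by contradiction using full rank of $\hat U$, and construct $w$ as the residual $e_1 - \tilde U A_{2,2}^{-1}\tilde U^\intercal e_1$, which is equally valid and also $O(n^2)$ given $A_{2,2}^{-1}$.
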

\begin{proof}
Expanding the definition of $U$, we have that $U\in \O(A)$ if and only if
\begin{align*}
    \begin{pmatrix}
    u^\intercal u & u^\intercal \tilde U\\
    \tilde U^\intercal u & \tilde U^\intercal \tilde U
    \end{pmatrix} = \begin{pmatrix}
    A_{1,1} & A_{1,2}\\
    A_{2,1} & A_{2,2}
    \end{pmatrix},
\end{align*}
i.e., if and only if $\norm{u}^2 = A_{1,1}$ and $\tilde U ^\intercal u = A_{2,1}$.

We decompose $\tilde U^\intercal$ as $\tilde U^\intercal = \begin{pmatrix}
    \hat u &
    \hat U^\intercal
    \end{pmatrix}
$,
where $\hat u\in\R^{n-1}$ and $\hat U\in \R^{(n-1)\by (n-1)}$. By assumption, $\hat U$ is invertible.
Thus, $\ker(\tilde U^\intercal)$ is one-dimensional and spanned by the vector
\begin{align*}
    z \coloneqq \begin{pmatrix}
    1\\
    -\hat U^{-\intercal}\hat u
    \end{pmatrix}.
\end{align*}

Next, note that $u_0\coloneqq \tilde U A_{2,2}^{-1} A_{2,1}$ satisfies $\tilde U^\intercal u_0 = A_{2,1}$.
Thus, $u_0 + tz$ parameterizes the solutions of $\tilde U^\intercal u = A_{2,1}$.

Note that $u_0$ has squared norm
\begin{align*}
    \norm{u_0}^2 = A_{2,1}^\intercal A_{2,2}^{-1}(\tilde U^\intercal\tilde U) A_{2,2}^{-1} A_{2,1}=A_{1,2} A_{2,2}^{-1} A_{2,1} < A_{1,1} , 
\end{align*}
where the last inequality follows by the Schur complement lemma and the assumption that $A\in\S^n_{++}$.
We deduce that the quadratic equation $\norm{u_0 + t z}^2 = A_{1,1}$ in $t$ has exactly two solutions.
In other words,
there are exactly two choices of $u\in\R^n$ such that $U\in\O(A)$.
Then, as $z_1 = 1$, we have that the two possible choices of $u$ differ in their first coordinates.

We now turn to the time complexity. Note that $A_{2,2} = \tilde U^\intercal \tilde U = \hat u \hat u^\intercal + \hat U^\intercal \hat U$. Thus, $(\hat U^\intercal \hat U)^{-1} = (A_{2,2} - \hat u\hat u^\intercal)^{-1}$. This quantity can be computed in $O(n^2)$ time given $A_{2,2}^{-1}$ using the Sherman--Morrison formula. Then, the quantity $-\hat U^{-\intercal}\hat u$ can be written as
\begin{align*}
    -\hat U^{-\intercal}\hat u &= - \hat U^{-\intercal}\hat U^{-1} \hat U \hat u\\
    &= - (A_{2,2} - \hat u\hat u^\intercal)^{-1} \hat U\hat u.
\end{align*}
We deduce that the quantities $u_0$ and $z$ can both be computed in $O(n^2)$ time. Finally, computing the two choices of $t$ can also be done within this time limit.
\ije{\qedhere}
\end{proof}

\begin{remark} \label{r:upperTriContinuity}
    The output of the construction in \cref{lem:complete_OA_from_submatrix} is continuous in $\tilde U$ and $A$ wherever it is defined. Formally, there are two continuous functions $u_1$ and $u_2$ from
    \begin{align*}
        \set{(\tilde U, A)\in\R^{n\by  (n-1)}\times \S^n:\, \begin{array}{l}
        A\in\S^n_{++}\\
        \tilde U^\intercal  \tilde U = A_{2,2}\\
        \hat U\text{ is invertible}
        \end{array}}
    \end{align*}
    to $\R^n$ that track the two possible choices of $u$ in \cref{lem:complete_OA_from_submatrix}.
    This follows from continuity of $z$, $u_0$, and the coefficients in the quadratic equation $\norm{u_0 + tz}^2 = A_{1,1}$ in the proof of \cref{lem:complete_OA_from_submatrix}.
\end{remark}

The following \change{proposition} provides a parameterized construction for the entire set $\pi_\sut^{-1}(\sigma)\cap \O(n)$.
\begin{proposition}
    \label{thm:triangle_general_A}
Let $A\in\S^n_{++}$ and $\sigma\in\inter(\pi_\sut(\Bop(A)))$. Then, $\abs{\pi_\sut^{-1}(\sigma)\cap \O(A)}=2^n$.
Furthermore, no two matrices in $\pi_\sut^{-1}(\sigma)\cap \O(A)$ agree on all of their diagonal entries.
\end{proposition}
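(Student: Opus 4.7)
The plan is to induct on $n$. The base case $n=1$ is immediate: $\O(A)=\{\pm\sqrt{A}\}$, $\pi_\sut$ is trivial, and the two elements have distinct diagonals.

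For the inductive step, I write a putative $X\in\O(A)$ with $\pi_\sut(X)=\sigma$ as the block partition
\[
    X=\begin{pmatrix} t & w^\intercal \\ u & X'' \end{pmatrix},
\]
where $t=X_{1,1}$, $w^\intercal$ is the remainder of the first row, $u$ is the remainder of the first column, and $X''\in\R^{(n-1)\times(n-1)}$ is the bottom-right block. Fixing $\pi_\sut(X)=\sigma$ fixes $w$ (the first row of $X$ is strictly upper triangular outside position $(1,1)$) as well as $\sigma''\coloneqq \pi_\sut(X'')$. The bottom-right $(n-1)\times(n-1)$ block of $X^\intercal X=A$ reads $ww^\intercal+X''^\intercal X''=A_{2,2}$, where $A_{2,2}$ is the bottom-right $(n-1)\times(n-1)$ submatrix of $A$, so $X''$ must satisfy $X''^\intercal X''=A''\coloneqq A_{2,2}-ww^\intercal$. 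Using the identification $\inter(\pi_\sut(\Bop(A)))=\pi_\sut(\inter(\Bop(A)))$ recorded earlier in this section, pick $X_0$ with $X_0^\intercal X_0\prec A$ and $\pi_\sut(X_0)=\sigma$; the same block computation applied to $X_0$ gives $(X_0)''^\intercal (X_0)''\prec A''$, which simultaneously yields $A''\succ 0$ and $\sigma''\in\inter(\pi_\sut(\Bop(A'')))$.

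Applying the inductive hypothesis to $(A'',\sigma'')$, there are exactly $2^{n-1}$ valid $X''$, no two of which share all diagonal entries. Each such $X''$ is invertible since $X''^\intercal X''=A''\succ 0$, so \cref{lem:complete_OA_from_submatrix} applies to $\tilde U = \begin{pmatrix} w^\intercal \\ X'' \end{pmatrix}$: one checks $\tilde U^\intercal\tilde U=ww^\intercal+X''^\intercal X''=A_{2,2}$ and that the bottom $(n-1)\times(n-1)$ submatrix of $\tilde U$ is the invertible $X''$. The lemma then produces exactly two vectors completing $\tilde U$ to an element of $\O(A)$, and these two choices differ in their first coordinates, i.e., in $t=X_{1,1}$. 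Multiplying, $\abs{\pi_\sut^{-1}(\sigma)\cap \O(A)}=2\cdot 2^{n-1}=2^n$.

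For the second claim, the two completions attached to a single $X''$ share $\diag(X'')$ but disagree on $X_{1,1}$, while distinct $X''$ have distinct diagonals by the inductive hypothesis. Hence all $2^n$ matrices pairwise disagree on at least one diagonal entry. The delicate step is propagating the interior hypothesis from $(A,\sigma)$ to $(A'',\sigma'')$; the Schur-style block computation above handles this, after which \cref{lem:complete_OA_from_submatrix} and induction do all the remaining work.
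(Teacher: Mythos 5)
Your proof is correct and follows essentially the same route as the paper's: induct on $n$, strip off the first row/column, use a strictly feasible point to show the reduced data $(A_{2,2}-ww^\intercal,\sigma'')$ again satisfies the hypotheses, and invoke \cref{lem:complete_OA_from_submatrix} to get exactly two completions differing in the $(1,1)$ entry. The only differences are cosmetic (notation, an explicit $n=1$ base case, and spelling out the diagonal-distinctness bookkeeping that the paper leaves terse).
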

\begin{proof}
We will induct on $n$. The claim is vacuously true for $n = 1$, thus assume $n\geq 2$.

Let $X\in\inter(\Bop(A))$ satisfy $\sigma=\pi_\sut(X)$.
Partition $X$ and $A$ as
\begin{align*}
    X = \begin{pmatrix}
    \xi & x^\intercal\\
    \bar x & X_{2,2}
    \end{pmatrix},\qquad
    A = \begin{pmatrix}
        \alpha & a^\intercal\\
        a & A_{2,2}
        \end{pmatrix}.
\end{align*}

As $X\in\inter(\Bop(A))$, we have that $A \succ X^\intercal X$ so that \change{its bottom-right blocks are also ordered:}
\begin{align*}
    A_{2,2} \succ xx^\intercal + X_{2,2}^\intercal X_{2,2}.
\end{align*}
\change{Here, we have used the fact that the bottom-right block of $X^\intercal X$ is $xx^\intercal + X_{2,2}^\intercal X_{2,2}$.

Now, subtracting $xx^\intercal$ from both sides, we have that $A_{2,2}-xx^\intercal \succ X_{2,2}^\intercal X_{2,2} \succeq 0$, i.e. $X_{2,2} \in \inter(\Bop(A_{2,2} - xx^\intercal))$ and $A_{2,2}-xx^\intercal$ is positive definite.}

By induction, there exist exactly $2^{n-1}$ matrices $U_{2,2}\in O(A_{2,2}- xx^\intercal)$ matching the strictly upper triangular entries of $X_{2,2}$.
For each of these choices, the matrix $U_{2,2}$ has rank $n-1$.
Define $\tilde U \in\R^{n\times (n-1)}$ as
\begin{align*}
    \tilde U = \begin{pmatrix}
    x^\intercal\\
    U_{2,2}
    \end{pmatrix}.
\end{align*}
Note that $\tilde U^\intercal \tilde U = xx^\intercal + U_{2,2}^\intercal U_{2,2} = A_{2,2}$.
By \cref{lem:complete_OA_from_submatrix}, for each choice of $\tilde U$, there are exactly two ways to append a column to the left of $\tilde U$ to construct a matrix $U\in\O(A)$.
Furthermore, the two choices differ in their diagonal entry.
Finally, we note that the strictly upper triangular entries of $U$ match the strictly upper triangular entries of $X$.\ije{\qedhere}
\end{proof}

For each $\rho\in\set{\pm 1}^n$, we may now define the map
$X_\rho:\inter(\pi_\sut(\Bop(n)))\to\O(n)$ to be the output of the above construction applied to $\sigma\in\inter(\pi_\sut(\Bop(n)))$, where in the $k \by k$ submatrix we pick the larger (or smaller) possible diagonal entry if $\rho_{n-k+1}$ is positive (or negative).
For example, if $\sigma = 0\in\R^{\binom{n}{2}}$, then $X_{\rho}(\sigma) = \Diag(\rho)\in\O(n)$.
Inductively applying \cref{r:upperTriContinuity}, one may verify that $X_\rho(\sigma)$ is continuous as a function of $\sigma \in\inter(\Bop(n))$.

\begin{lemma}
Given $\sigma\in\inter(\pi_\sut(\Bop(n)))$ and $\rho\in\set{\pm 1}^n$, we can construct
$X_\rho(\sigma)$ in time $O(n^3)$.
\end{lemma}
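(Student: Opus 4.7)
The plan is to implement the recursive construction underlying $X_\rho(\sigma)$ from the proof of \cref{thm:triangle_general_A} so that each recursion level on an $m\times m$ subproblem runs in $O(m^2)$ time, yielding total cost $\sum_{k=0}^{n-1}O((n-k)^2)=O(n^3)$.

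Concretely, I would set up the following recursion. Starting from $A_0=I_n$ and $\sigma^{(0)}=\sigma$, at each level $k$ I need to pick an intermediate matrix $X^{(k)}\in\inter(\Bop(A_k))$ with $\pi_\sut(X^{(k)})=\sigma^{(k)}$. A convenient explicit choice is to set the corner entry $\xi$ and the below-diagonal part $\bar x$ of $X^{(k)}$ to zero, read off $x_k$ (the above-diagonal part of the first row) directly from $\sigma^{(k)}$, and let the bottom-right block carry the remaining entries of $\sigma^{(k)}$. This parsing is $O(n-k)$, and then $A_{k+1}=A_{k,2,2}-x_k x_k^\intercal$ exactly as in the proof of \cref{thm:triangle_general_A}; strict positive definiteness is preserved by the Schur complement argument used there, so the hypotheses of the lemma will be met at every level.

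To invoke \cref{lem:complete_OA_from_submatrix} efficiently on the way back up, I need $A_{k,2,2}^{-1}$ at each step. I would maintain $A_k^{-1}$ on the way down. Given $A_k^{-1}$ written in block form $\begin{pmatrix}\tilde\alpha & \tilde a^\intercal\\ \tilde a & \tilde A\end{pmatrix}$, the block-inverse identity gives $A_{k,2,2}^{-1}=\tilde A-\tilde a\tilde a^\intercal/\tilde\alpha$ in $O((n-k)^2)$ time; then one Sherman--Morrison update produces $A_{k+1}^{-1}=(A_{k,2,2}-x_k x_k^\intercal)^{-1}$ in another $O((n-k)^2)$ time. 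The base case $A_0^{-1}=I_n$ is free, and the inductive invariant $A_k\succ 0$ ensures every update is well-defined.

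Finally, at the base case I take the single $1\times 1$ matrix to be $+\sqrt{\alpha}$ or $-\sqrt{\alpha}$ according to $\rho_n$. Ascending the recursion, at level $k$ I apply \cref{lem:complete_OA_from_submatrix} to prepend a new column to the recursively built orthogonal factor $\tilde U$; its full-rank hypothesis on the bottom $(m-1)\times(m-1)$ block is automatic, since that block is precisely the matrix returned by the previous recursive call, which lies in $\O(A_{k+1})$ with $A_{k+1}\succ 0$ and hence has full rank. Of the two candidate vectors $u$ (differing in their first coordinate, i.e., the new diagonal entry) I choose the one consistent with $\rho_{n-k+1}$---the larger first coordinate if $\rho_{n-k+1}=+1$, the smaller otherwise. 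Given the maintained $A_{k,2,2}^{-1}$, each such call costs $O((n-k)^2)$, and the geometric sum establishes the claimed $O(n^3)$ bound. The only real obstacle is the bookkeeping: matching the recursion depth to the correct coordinate of $\rho$ and verifying that the explicit intermediate choice of $X^{(k)}$ indeed stays in $\inter(\Bop(A_k))$ (which follows from $\sigma\in\inter(\pi_\sut(\Bop(n)))$ together with the preserved strict positive definiteness).
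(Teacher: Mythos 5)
Your proposal is correct and follows essentially the same approach as the paper: implement the recursion of \cref{thm:triangle_general_A} via \cref{lem:complete_OA_from_submatrix}, maintaining the needed inverse in $O(m^2)$ time at each $m\times m$ level by rank-one update formulas, summing to $O(n^3)$. The only substantive difference is which inverse you carry: you maintain the full $A_k^{-1}$ and recover $A_{k,2,2}^{-1}$ via the block-inverse identity $A_{k,2,2}^{-1} = \tilde A - \tilde a\tilde a^\intercal/\tilde\alpha$ followed by one Sherman--Morrison step to get $A_{k+1}^{-1}$, whereas the paper carries $A_{2,2}^{-1}$ directly and realizes the next bottom-right block as a block of a rank-$\le 3$ update of the current $A_{2,2}$. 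Both give $O((n-k)^2)$ per level, and yours is arguably the cleaner bookkeeping.

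Two small points. First, under your indexing (level $k$ handles the $(n-k)\times(n-k)$ subproblem) the sign choice on the ascent should be read from $\rho_{k+1}$, not $\rho_{n-k+1}$: the paper's convention uses $\rho_{n-m+1}$ at the $m\times m$ subproblem, and $m=n-k$ gives $\rho_{k+1}$; your base case correctly uses $\rho_n$, so just propagate that indexing upward. Second, your explicit choice of $X^{(k)}$ with zero diagonal and zero lower triangle is not obviously in $\inter(\Bop(A_k))$---the purely strictly-upper-triangular completion of an interior $\sigma$ can have operator norm exceeding one---but this is harmless because it is also unnecessary. What the argument actually needs is that \emph{some} $X^{(k)}\in\inter(\Bop(A_k))$ with $\pi_\sut(X^{(k)})=\sigma^{(k)}$ exists, which is exactly the invariant $\sigma^{(k)}\in\inter(\pi_\sut(\Bop(A_k)))$ preserved in the proof of \cref{thm:triangle_general_A}; the computation only ever reads off the top-row vector $x_k$, which is determined by $\sigma^{(k)}$ and not by the particular completion.
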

\begin{proof}
We will apply the construction of \cref{thm:triangle_general_A} using \cref{lem:complete_OA_from_submatrix} while recursively maintaining $A_{2,2}^{-1}$ in time $O(n^2)$.
It is clear that we have access to $A_{2,2}^{-1}$ at the very top of the recursion as $A_{2,2}^{-1} = I_{n-1}^{-1} = I_{n-1}$.
It remains to prove the following fact: Given $A\in\S^k$ and $x\in\R^k$ such that $A - xx^\intercal \succ 0$, it is possible to compute the inverse of the bottom-right $(k-1)\by(k-1)$ block of $A - xx^\intercal$ from the inverse of $A$ in time $O(n^2)$.

Write
\begin{align*}
    A - xx^\intercal = \begin{pmatrix}
    \alpha & a^\intercal\\
    a & A_{2,2}
    \end{pmatrix}.
\end{align*}
Note that $\alpha >0$ by the assumption that $A - xx^\intercal\succ 0$. Then,
\begin{align*}
    \begin{pmatrix}
    \alpha &\\&A_{2,2}
    \end{pmatrix} = A - xx^\intercal - \begin{pmatrix}
    0 & a^\intercal\\ a & 0
    \end{pmatrix}.
\end{align*}
Thus, we can compute $A_{2,2}^{-1}$ by computing the inverse of the expression on the right hand side and taking its bottom right block. This can be done via the Sherman--Morrison formula in time $O(n^2)$.
Repeating once for each of the $n$ entries results in $O(n^3)$ time in total.\ije{\qedhere}
\end{proof}

By \cref{thm:triangle_general_A}, $\diag\left(\pi_\sut^{-1}(\sigma)\cap\O(n)\right)\subseteq\R^n$ is a set of $2^n$ distinct elements. 
The following result, due to~\citet{fiedler2009suborthogonality}, allows us to deduce that the $2^n$ elements correspond to the vertices of a (scaled) hypercube.
\begin{lemma}[\citet{fiedler2009suborthogonality}]
\label{thm:fiedler}
Let $U\in\O(n)$ and let $R\in\R^{a \change{\times}b}$ be a submatrix of $U$ where $a + b > n$. Then, $\norm{R}_\textup{op} = 1$.
\end{lemma}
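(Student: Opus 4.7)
The plan is to prove the claim by establishing both $\norm{R}_\textup{op}\leq 1$ and $\norm{R}_\textup{op}\geq 1$. I will treat the statement as saying $R$ is an $a \times b$ submatrix of $U$, say $R = U[I,J]$ for index sets $I \subseteq [n]$ with $|I|=a$ and $J\subseteq[n]$ with $|J|=b$.

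The upper bound is immediate and does not use the assumption $a+b>n$. Indeed, writing $R = P_I U P_J^\intercal$ where $P_I\in\R^{a\times n}$ and $P_J\in\R^{b\times n}$ are the coordinate selection matrices (each a submatrix of the identity, hence with operator norm equal to $1$), submultiplicativity gives $\norm{R}_\textup{op}\leq \norm{P_I}_\textup{op}\norm{U}_\textup{op}\norm{P_J^\intercal}_\textup{op} = 1$.

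For the lower bound, I would exploit the complementary block $U[I^c, J]$, which has dimensions $(n-a)\times b$. The hypothesis $a+b>n$ rearranges to $n-a<b$, so this rectangular matrix has more columns than rows and therefore admits a nontrivial kernel: there exists a unit vector $w\in\R^b$ with $U[I^c, J]\,w = 0$. Let $v\in\R^n$ be the vector with $v_J = w$ and $v_{J^c}=0$. Then $Uv$ vanishes on every coordinate in $I^c$, so $Uv$ is supported on $I$ with $(Uv)_I = R w$. Since $U$ is orthogonal, $\norm{Uv}_2 = \norm{v}_2 = \norm{w}_2 = 1$, yielding $\norm{Rw}_2 = 1$. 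Combined with $\norm{w}_2=1$, this proves $\norm{R}_\textup{op}\geq 1$.

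There is no real obstacle: the argument is a clean dimension count paired with orthogonality. The only subtlety worth mentioning explicitly is that the lower bound requires the strict inequality $a+b>n$, since otherwise the kernel of $U[I^c, J]$ could be trivial and the vector $w$ would not exist; this matches the necessity of the hypothesis in the statement.
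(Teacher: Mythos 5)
Your proof is correct. The paper itself gives no proof of this lemma---it cites \citet{fiedler2009suborthogonality} and uses the result as a black box---so there is nothing to compare against within the paper. Your argument is the standard self-contained one: the upper bound $\norm{R}_\textup{op}\leq 1$ holds for any submatrix of an orthogonal matrix by submultiplicativity, and the lower bound follows by observing that the complementary block $U[I^c,J]$ has shape $(n-a)\times b$ with $n-a<b$, hence a nontrivial kernel; extending a unit kernel vector $w$ by zeros to $v\in\R^n$ and using that $U$ is an isometry forces $\norm{Rw}_2=\norm{Uv}_2=\norm{v}_2=1$. One small housekeeping point: the statement as printed has the typo $R\in\R^{a+b}$ where $R\in\R^{a\times b}$ is clearly intended, and you correctly interpreted it that way.
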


\begin{lemma}
    \label{lem:diagonals_of_so}
Let $\sigma\in\inter(\pi_\sut(\Bop(n)))$.
There exist scalars $\alpha_i < \beta_i$ for $i\in[n]$ (independent of $\rho$) such that for all $\rho\in\set{\pm 1}^n$,
\begin{align*}
    X_{\rho}(\sigma)_{i,i} = \begin{cases}
        \alpha_i & \text{if } \rho_i = -1\\
        \beta_i & \text{if } \rho_i = 1
    \end{cases}.
\end{align*}
\end{lemma}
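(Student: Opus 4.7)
The plan is to induct on $n$, with the strengthened hypothesis that for every $A \in \S^n_{++}$ and $\sigma \in \inter(\pi_\sut(\Bop(A)))$, there exist scalars $\alpha_i(\sigma, A) < \beta_i(\sigma, A)$ so that the construction from \cref{thm:triangle_general_A} produces matrices in $\pi_\sut^{-1}(\sigma) \cap \O(A)$ whose $i$-th diagonal entry equals $\alpha_i$ or $\beta_i$ according to the sign choice made at the $i$-th level of the recursion. Specializing to $A = I_n$ then gives the lemma. The base case $n = 1$ is immediate, since $\O(A)$ consists of the two points $\pm\sqrt{A_{1,1}}$.

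For the inductive step I would retrace the construction. Write $A = \begin{pmatrix} A_{1,1} & a^\intercal \\ a & A_{2,2}\end{pmatrix}$, and let $x \in \R^{n-1}$ denote the first-row strictly upper triangular entries of the target matrix (which are fixed by $\sigma$). \cref{thm:triangle_general_A} first picks some $U_{2,2} \in \O(A_{2,2} - xx^\intercal)$ with the appropriate SUT entries, and then appends a first column via \cref{lem:complete_OA_from_submatrix}. From the proof of that lemma, the column is $u = u_0 + tz$, where $u_0 = \tilde U A_{2,2}^{-1} a$ and $z = (1, -U_{2,2}^{-\intercal} x)^\intercal$; in particular $u_1 = x^\intercal A_{2,2}^{-1} a + t$. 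A direct computation gives $\tilde U^\intercal z = x - U_{2,2}^\intercal U_{2,2}^{-\intercal} x = 0$, so $u_0$ and $z$ are orthogonal and the norm constraint $\|u\|^2 = A_{1,1}$ collapses to $t^2 = (A_{1,1} - a^\intercal A_{2,2}^{-1} a)/\|z\|^2$.

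The main technical step, and the main obstacle, is that $\|z\|^2$ does not depend on the particular $U_{2,2}$ chosen. Direct computation yields $\|z\|^2 = 1 + x^\intercal (U_{2,2}^\intercal U_{2,2})^{-1} x$, and substituting $U_{2,2}^\intercal U_{2,2} = A_{2,2} - xx^\intercal$ gives $\|z\|^2 = 1 + x^\intercal (A_{2,2} - xx^\intercal)^{-1} x$, a function of $A$ and $\sigma$ alone. Consequently the two candidate values $u_1 = x^\intercal A_{2,2}^{-1} a \pm \sqrt{A_{1,1} - a^\intercal A_{2,2}^{-1} a}/\|z\|$ depend only on $(\sigma, A)$; take these to be $\alpha_1(\sigma, A) < \beta_1(\sigma, A)$, with strict inequality following from the Schur complement bound $A_{1,1} - a^\intercal A_{2,2}^{-1} a > 0$ afforded by $A \succ 0$. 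The sign of $t$, and hence which of $\alpha_1, \beta_1$ is realized, is governed by $\rho_1$ per the definition of $X_\rho$.

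The remaining diagonal entries are those of $U_{2,2}$, and the inductive hypothesis applied on $\O(A_{2,2} - xx^\intercal)$ supplies $\alpha_i, \beta_i$ for $i = 2, \dots, n$ with coordinate-wise control by $\rho_2, \dots, \rho_n$, since the outer rule ``$\rho_{n-k+1}$ at depth $k$'' is exactly the inner rule on the $(n-1) \times (n-1)$ problem. Specializing $A = I_n$ completes the proof. The crux is the $\|z\|^2$ identity: without independence from $U_{2,2}$, different recursive branches for the lower block could shift the pair of candidates for the top diagonal entry and the diagonal would fail to factor as a product of $2$-element sets.
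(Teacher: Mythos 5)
Your proof is correct and takes a genuinely different route from the paper's. The paper's argument is structural: for each $i$ it considers the $i\times(n-i+1)$ submatrix $\hat R_i$ of $X$ whose bottom-left corner is at $(i,i)$, observes that every entry of $\hat R_i$ other than the $(i,i)$ one is an SUT entry fixed by $\sigma$, and invokes Fiedler's result (\cref{thm:fiedler}) to force $\|\hat R_i\|_\textup{op}=1$ for every $X\in\pi_\sut^{-1}(\sigma)\cap\O(n)$; since $\set{s:\|R_i(s)\|_\textup{op}\le 1}$ is a compact interval with nonempty interior, its boundary is a two-point set $\set{\alpha_i,\beta_i}$ depending only on $\sigma$, and combining with the $2^n$ distinct diagonals from \cref{thm:triangle_general_A} finishes. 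You instead unwind the recursion of \cref{lem:complete_OA_from_submatrix} and compute the two candidate diagonal entries in closed form. I verified the key steps: with $\hat U = U_{2,2}$ one has $\tilde U^\intercal z = x - U_{2,2}^\intercal U_{2,2}^{-\intercal}x = 0$, so $u_0\perp z$ and the quadratic decouples; and $\|z\|^2 = 1 + x^\intercal(U_{2,2}^\intercal U_{2,2})^{-1}x = 1 + x^\intercal(A_{2,2}-xx^\intercal)^{-1}x$, which is indeed a function of $(\sigma,A)$ alone, so the two candidates $x^\intercal A_{2,2}^{-1}a \pm \sqrt{A_{1,1}-a^\intercal A_{2,2}^{-1}a}/\|z\|$ are independent of the branch $U_{2,2}$, and strictness $\alpha_1<\beta_1$ follows from the Schur-complement bound. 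Your approach buys a self-contained proof (no appeal to Fiedler) with explicit formulas for $\alpha_i,\beta_i$ and makes the $\rho$-to-diagonal correspondence in the construction completely transparent; the paper's approach is shorter and gives the containment $\diag(V_\sigma)\subseteq\prod_i\set{\alpha_i,\beta_i}$ up front before invoking the counting from \cref{thm:triangle_general_A}.
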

\begin{proof}
    Fix $\sigma\in\inter(\pi_\sut(\Bop(n)))$ and let $X\in\inter(\Bop(n))$ such that $\pi_\sut(X) = \sigma$.
    
    For $i\in[n]$, let $\hat R_i$ denote the $i\by (n-i+1)$ dimensional submatrix of $X$ with bottom-left entry at coordinate $(i,i)$. 
    Note that $\norm{\hat{R}_{i}}_\textup{op} \leq \norm{X}_\textup{op} < 1$ as $X \in \inter(\Bop(n))$. 
Let $R_i(s)\in\R^{i\by(n-i+1)}$ denote the matrix that replaces the bottom-left entry of $\hat R_i$ with $s\in\R$.
    Then, $R_i(s)$ parameterizes a line that intersects the interior of the operator norm ball \changetwo{(which we will denote here by $\Bop(i,n-i+1)$)}.
    As the operator norm ball is a compact convex body, there are exactly two choices of $s$, denoted $\alpha_i<\beta_i$, for which $\norm{R_i(s)}_\textup{op} = 1$.

Then, by \cref{thm:fiedler}, we deduce that $\diag\left(\pi_\sut^{-1}(\sigma) \cap \O(n)\right) \subseteq \prod_i \set{\alpha_i,\beta_i}$.
    Combining with \cref{thm:triangle_general_A} completes the proof.
    \ije{\qedhere}
\end{proof}

The following result states that the sign of $\det(X_{\rho}(\sigma))$ depends only on $\rho$.
\begin{lemma}
    \label{thm:sign_of_x_rho}
Let $\sigma\in\inter(\pi_\sut(\Bop(n)))$ and $\rho\in\set{\pm 1}^n$. Then $\det(X_{\rho}(\sigma)) = \prod_i \rho_i$.
\end{lemma}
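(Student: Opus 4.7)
The plan is to exploit continuity of $X_\rho(\sigma)$ in $\sigma$ over the connected region $\inter(\pi_\sut(\Bop(n)))$ to reduce the computation to a single convenient base point, namely $\sigma=0$.

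First I would argue that $\det(X_\rho(\sigma))$ is a locally constant, and hence constant, function of $\sigma$ on $\inter(\pi_\sut(\Bop(n)))$. The set $\inter(\pi_\sut(\Bop(n)))$ is the interior of a convex set and is therefore connected (and open in $\R^{\binom{n}{2}}$). By the inductive application of \cref{r:upperTriContinuity} noted immediately before the lemma, the map $\sigma \mapsto X_\rho(\sigma)$ is continuous, so $\sigma\mapsto\det(X_\rho(\sigma))$ is continuous into $\R$. On the other hand, $X_\rho(\sigma)\in\O(n)$, so $\det(X_\rho(\sigma))\in\{\pm 1\}$ pointwise. A continuous function into $\{\pm 1\}$ on a connected space is constant, so $\det(X_\rho(\sigma))$ depends only on $\rho$.

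It then suffices to evaluate at $\sigma=0$, which lies in $\inter(\pi_\sut(\Bop(n)))$ since a small ball around $0\in\R^{n\times n}$ lies in $\inter(\Bop(n))$. I would show by induction on $n$ that $X_\rho(0) = \Diag(\rho)$, from which $\det(X_\rho(0)) = \prod_i \rho_i$ follows immediately. For the inductive step, I would choose the witness $X=0\in\inter(\Bop(n))$ with $\pi_\sut(X)=0$ and trace through the construction in \cref{thm:triangle_general_A}: with $x=0$ and $A=I_n$, the recursive call produces $U_{2,2} = \Diag(\rho_{2:n}) \in \O(n-1)$, so $\tilde U = \begin{pmatrix} 0 \\ \Diag(\rho_{2:n}) \end{pmatrix}$. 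The constraint $\tilde U^\intercal u = 0$ forces $u_2 = \cdots = u_n = 0$, and then $\|u\|^2 = 1$ gives $u_1 = \pm 1$; by the definition of $X_\rho$ the sign is chosen to match $\rho_1$ (since the two diagonal entries $\alpha_1<\beta_1$ supplied by \cref{lem:diagonals_of_so} are $-1$ and $1$ when $\sigma=0$). Therefore $X_\rho(0) = \Diag(\rho)$.

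The main obstacle is making sure the continuity argument is rigorously justified; in particular, one must verify that the inductive construction yields a continuous choice of $u$ at every level of the recursion (which is exactly what \cref{r:upperTriContinuity} provides) and that the domain $\inter(\pi_\sut(\Bop(n)))$ is genuinely connected (handled by convexity). Once these points are granted, the base-point computation at $\sigma=0$ is essentially immediate from the construction.
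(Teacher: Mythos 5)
Your proof is correct and takes essentially the same route as the paper's: both reduce to $\sigma = 0$ via the observation that $\sigma\mapsto\det(X_\rho(\sigma))$ is a continuous $\{\pm1\}$-valued function on a connected domain and hence constant, then compute $X_\rho(0)=\Diag(\rho)$ (the paper considers the segment $\alpha\mapsto\alpha\sigma$, you invoke connectedness of the whole interior — an inessential difference). Your unpacking of $X_\rho(0)=\Diag(\rho)$ by tracing the recursion is a bit more explicit than the paper, which simply cites the earlier example.
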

\begin{proof}
Fix $\rho\in\set{\pm 1}$ and $\sigma\in\inter(\pi_\sut(\Bop(n)))$ and consider the continuous function $f(\alpha)\coloneqq \det(X_{\rho}(\alpha\sigma))$ defined on $\alpha\in[0,1]$. As $X_{\rho}(\alpha\sigma)\in\O(n)$ for all $\alpha\in[0,1]$, we have that $f$ can only take on the values $\pm 1$. As $f$ is also continuous, it must be constant so that $f(1) = f(0) = \det(X_{\rho}(0)) = \det(\Diag(\rho)) = \prod_i \rho_i$, where we used $X_{\rho}(0) = \Diag(\rho)$ as mentioned above.\ije{\qedhere}
\end{proof}

\subsection{Refinements of \cref{thm:opt_so_with_sut}}

The following theorem extends \cref{thm:opt_so_with_sut} to an approximation result in the remaining case to maximization over $\SO(n)$ with SUT constraints and $\det(A)<0$.
\begin{theorem}
\label{thm:approximate_det_c_negative_generic}
Let $A\in\R^{n\by n}$ be a diagonal matrix with $\det(A)<0$ and let $\cC\subseteq \pi_\sut(\Bop(n))$ be a nonempty closed convex set. Then \eqref{eq:opt_over_bop_with_sut} provides a $\left(1 - \frac{1}{n}\right)$-approximation of \eqref{eq:opt_over_so_with_sut} in the following sense:
\begin{align*}
    &\max_{X\in\SO(n)}\set{\ip{A,X}:\, \pi_\cT(X)\in\cC}\\
    &\qquad\geq \left(1 - \frac{1}{n}\right) \max_{X\in\Bop(n)}\set{\ip{A,X}:\, \pi_\cT(X) \in\cC} + \frac{1}{n}\min_{X\in\Bop(n)}\set{\ip{A,X}:\, \pi_\cT(X) \in\cC}.
\end{align*}
\end{theorem}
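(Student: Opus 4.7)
The plan is to combine the structural description of $\pi_\sut^{-1}(\sigma)\cap\O(n)$ from \Cref{thm:ut_structure_dt} with a pigeonhole argument. The key idea: when $\det(A)<0$, the unique optimizer $\hat X$ of \eqref{eq:opt_over_bop_with_sut} lies in $\O(n)\setminus\SO(n)$, but has exactly $n$ ``sign-flip neighbors'' which do lie in $\SO(n)$, and I will average over these. As a preliminary reduction, I will follow the perturbation scheme from the proof of \Cref{thm:opt_so_with_sut} to assume that $\sigma\coloneqq\pi_\sut(\hat X)$ lies in $\inter(\pi_\sut(\Bop(n)))$; the bound then passes to the general case by compactness and continuity, with the final constant $(1-\frac{1}{n})$ preserved. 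Under this genericity assumption, \Cref{prop:opt_so_with_sut_generic} (applied to $\sigma$) says $\hat X$ is the unique $\Bop(n)$-maximizer with $\pi_\sut=\sigma$ and lies in $\O(n)$; tracking signs in that proof gives $\det(\hat X) = \det(A)/\det(\Sigma) = -1$. Writing $\hat X = X_{\hat\rho}(\sigma)$ via \Cref{thm:ut_structure_dt}, we have $\prod_i\hat\rho_i=-1$, and since $\ip{A,X}=\sum_i A_{ii}X_{ii}$ depends only on the diagonal and $\hat X$ maximizes this over the $2^n$ diagonal configurations $\prod_i\{\alpha_i,\beta_i\}$, we must have $\hat\rho_i=\sign(A_{ii})$ for each $i$.

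Now define, for each $i\in[n]$, the vector $\rho^{(i)}\in\{\pm 1\}^n$ obtained by negating the $i$th entry of $\hat\rho$, and set $X^{(i)}\coloneqq X_{\rho^{(i)}}(\sigma)$. Then $\prod_j\rho^{(i)}_j=+1$, so $X^{(i)}\in\SO(n)$, and $\pi_\sut(X^{(i)})=\sigma\in\cC$, making $X^{(i)}$ feasible for \eqref{eq:opt_over_so_with_sut}. Also let $\check X\coloneqq X_{-\hat\rho}(\sigma)$ be the ``antipodal'' element, which minimizes $\ip{A,\cdot}$ on $\pi_\sut^{-1}(\sigma)\cap\O(n)$. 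Since $X^{(i)}$ and $\hat X$ agree on all diagonal entries except the $i$th, and using $\hat\rho_i=\sign(A_{ii})$,
\begin{align*}
\ip{A,X^{(i)}}-\ip{A,\hat X} = A_{ii}(X^{(i)}_{ii}-\hat X_{ii}) = -|A_{ii}|(\beta_i-\alpha_i).
\end{align*}
The same identity applied coordinatewise to $\check X$ yields
\begin{align*}
\ip{A,\check X}-\ip{A,\hat X} = \sum_i A_{ii}(\check X_{ii}-\hat X_{ii}) = -\sum_i |A_{ii}|(\beta_i-\alpha_i).
\end{align*}
Summing the first identity over $i$ and comparing gives the key telescoping relation
\begin{align*}
\sum_{i=1}^n \ip{A,X^{(i)}} = (n-1)\ip{A,\hat X} + \ip{A,\check X}.
\end{align*}

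By the pigeonhole principle applied to the $n$ feasible matrices $X^{(i)}\in\SO(n)$,
\begin{align*}
\max_{X\in\SO(n)}\set{\ip{A,X}:\pi_\sut(X)\in\cC} \geq \max_i \ip{A,X^{(i)}} \geq \tfrac{n-1}{n}\ip{A,\hat X} + \tfrac{1}{n}\ip{A,\check X}.
\end{align*}
By optimality, $\ip{A,\hat X}$ equals the value of \eqref{eq:opt_over_bop_with_sut}; since $\check X\in\Bop(n)$ with $\pi_\sut(\check X)=\sigma\in\cC$, we also have $\ip{A,\check X}\geq \min_{X\in\Bop(n)}\set{\ip{A,X}:\pi_\sut(X)\in\cC}$. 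Substituting gives the claimed $(1-\frac{1}{n})$-approximation. I expect the main technical obstacle to be the perturbation reduction: ensuring genericity of $\sigma$ (interior) while maintaining $\det(A)<0$ and feasibility. I plan to mirror the two-step perturbation $(A,\cC)\mapsto (A_\epsilon,\cC_\epsilon)$ from the proof of \Cref{thm:opt_so_with_sut} and then pass to a subsequential limit in $\SO(n)$; continuity of the parametrization $X_\rho(\sigma)$ from \Cref{sec:utconstructions} is what makes this limit preserve the constants $\tfrac{n-1}{n}$ and $\tfrac{1}{n}$.
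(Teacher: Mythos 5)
Your proposal is correct and follows essentially the same argument as the paper: pick the sign‑flip neighbors $X_{\rho^{(i)}}\in\SO(n)$ of the $\Bop(n)$‑optimizer $\hat X=X_{\hat\rho}(\sigma)$, and bound the best of them by their average, which telescopes to $\tfrac{n-1}{n}\ip{A,X_{\hat\rho}}+\tfrac{1}{n}\ip{A,X_{-\hat\rho}}$. The paper phrases the pigeonhole step as ``$\min_i|A_{ii}(\beta_i-\alpha_i)|\le\tfrac{1}{n}\sum_i|A_{ii}(\beta_i-\alpha_i)|$,'' which is the same computation; you make the identification $\hat X=X_{\hat\rho}$ (via \cref{prop:opt_so_with_sut_generic} and the sign of $\det$) a bit more explicit, which is a welcome clarification of a step the paper leaves implicit, and both proofs handle boundary $\sigma$ by the same continuity/compactness limit.
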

\begin{proof}
Let $\hat\rho = \sign(\diag(A))$ and $\hat X := \hat{X}_{\hat{\rho}} \in\Bop(n)$ maximize \eqref{eq:opt_over_bop_with_sut} with $\sigma = \pi_\sut(\hat X)$.
We will only consider the case where $\sigma\in\inter(\pi_\sut(\Bop(n)))$. The general case follows by continuity and compactness.

We will fix $\sigma$ in the remainder of the proof and write $X_\rho$ instead of $X_\rho(\sigma)$.
Let $(\alpha_i,\beta_i)$ be the quantities furnished by \cref{lem:diagonals_of_so} applied to $\sigma$.
For $i\in[n]$, let $\rho^{(i)}\in\set{\pm 1}^n$ denote the vector that negates the $i$th entry of $\hat{\rho} = \sign(\diag(A))$, and by \cref{thm:sign_of_x_rho}, for all $i\in[n]$, we have
$X_{\rho^{(i)}}\in\SO(n)$ and $\pi_\sut(X_{\rho^{(i)}}) = \sigma \in\cC$. 

Then,
\begin{align*}
    \max_{X\in\SO(n)}\set{\ip{A,X}:\, \pi_\sut(X) \in\cC}
    &\geq \max_{i\in[n]} \ip{A, X_{\rho^{(i)}}}\\
&= \ip{A,\hat{X}_{\hat{\rho}}} - \min_{i\in[n]}\abs{A_{i,i}(\beta_i - \alpha_i)}\\
    &\geq \ip{A,\hat{X}_{\hat{\rho}}} - \frac{1}{n}\left(\sum_{i=1}^n \abs{A_{i,i}(\beta_i - \alpha_i)}\right)\\
    &= \ip{A,X_{\hat\rho}} - \frac{1}{n}\left(\ip{A, X_{\hat\rho}} - \ip{A,X_{-\hat\rho}}\right)\\
    &= \left(1 - \frac{1}{n}\right)\ip{A,X_{\hat\rho}} + \frac{1}{n}\ip{A, X_{-\hat\rho}},
\end{align*}
where in the second line we used that $\diag(X_{\rho^{i}})$ differs from $\diag(\hat{X}) = \diag(\hat{X}_{\hat{\rho}})$ only in the $i$-th entry, swapping $\alpha_{i}, \beta_{i}$,
and in the fourth line we used that $\hat{\rho} = \sign(\diag(A))$, so $\hat{X}_{\pm \hat{\rho}}$ satisfy $\pi_\sut(X_{\pm \hat\rho})=\sigma\in\cC$ for $\sigma\in\inter(\pi_\sut(\Bop(n)))$, and $\hat{X}_{\pm \hat{\rho}}$ are the maximizer and minimizer of the optimization problem \cref{eq:opt_over_o_with_sut} over $\O(n)$.\ije{\qedhere}
\end{proof}

\change{The approximation factor of $1-\tfrac{1}{n}$ in the statement of \cref{thm:approximate_det_c_negative_generic} is optimal as can be seen by considering the case $A= \Diag(1,1,\dots,1,-1)$ and $\cC= \set{\pi_\sut(0_{n\by n})}$. In this case, the optimal value of the optimization problem over $\SO(n)$ is $n-2$, whereas the maximum and minimum values over $\O(n)$ are $n$ and $-n$ respectively.}

\section{Obstructions to further generalization}
\label{sec:obstructions}

This section collects a number of results showing that our hidden convexity results are essentially tight.

\subsection{Maximality of \cref{thm:twoconvex}}
Recall that \Cref{thm:twoconvex} shows any two dimensional linear image of $\SO(n)$ is convex. 
The following result shows this
is \change{maximal} in a specific sense.
\begin{lemma}
    \label{thm:nonconvex}
    For any $n\geq 3$, there exists a linear map $\pi : \SO(n) \rightarrow \R^3$ so that $\pi(\SO(n))$ is nonconvex.
\end{lemma}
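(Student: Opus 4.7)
The plan is to exhibit a concrete linear map $\pi\colon\R^{n\times n}\to\R^3$ together with two preimages in $\SO(n)$ whose midpoint falls outside $\pi(\SO(n))$. Take
\[
    \pi(X)\coloneqq\bigl(X_{11}+X_{22},\ X_{33}+X_{44}+\cdots+X_{nn},\ X_{12}-X_{21}\bigr).
\]
The intuition driving this choice is that pinning the second coordinate at its maximum value $n-2$ should collapse the preimage to block-diagonal matrices of the form $\diag(Y,I_{n-2})$ with $Y\in\SO(2)$, on which the third coordinate takes only the two values $\pm 2$; averaging those two values then produces a point in $\conv(\pi(\SO(n)))$ that is unreachable in $\pi(\SO(n))$.

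For $\theta\in\R$, let $X_\theta\coloneqq\diag(R(\theta),I_{n-2})\in\SO(n)$, where $R(\theta)$ is the $2\times 2$ rotation of \cref{sec:prelim}. A direct computation gives $\pi(X_{\pi/2})=(0,n-2,2)$ and $\pi(X_{-\pi/2})=(0,n-2,-2)$, so the midpoint $(0,n-2,0)$ lies in $\conv(\pi(\SO(n)))$. To finish, I would show $(0,n-2,0)\notin\pi(\SO(n))$. Suppose toward contradiction that some $X\in\SO(n)$ satisfies $\pi(X)=(0,n-2,0)$. Since every diagonal entry of an orthogonal matrix satisfies $\abs{X_{ii}}\le 1$, the equation $X_{33}+\cdots+X_{nn}=n-2$ forces $X_{ii}=1$ for each $i\ge 3$. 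Combined with the unit-norm constraint on the $i$-th column (and row), this forces both the $i$-th column and $i$-th row to equal $e_i$. Hence $X=\diag(Y,I_{n-2})$ for some $Y\in\SO(2)$, so $Y=R(\theta)$ for some $\theta$, and the condition $X_{11}+X_{22}=2\cos\theta=0$ forces $\theta=\pm\pi/2$. But then $X_{12}-X_{21}=2\sin\theta=\pm 2\ne 0$, contradicting the third coordinate of $\pi(X)$.

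The only nontrivial step is the ``collapse'' of the preimage to block-diagonal form, which follows immediately from the elementary bound $\abs{X_{ii}}\le 1$ on the diagonal entries of an orthogonal matrix. Everything else is a direct computation, so I anticipate no substantive obstacles.
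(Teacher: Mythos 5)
Your proof is correct and uses essentially the same approach as the paper: both arguments pin the coordinate $\sum_{i\ge 3} X_{ii}$ at its maximum value $n-2$, which forces $X$ to be block-diagonal $\diag(R(\theta), I_{n-2})$, and then observe that the remaining coordinates trace out a circle in $\SO(2)$, giving nonconvexity. The only cosmetic difference is that the paper takes $\pi(X) = (X_{11}, X_{12}, \sum_{i\ge 3} X_{ii})$ and exhibits the whole circle $(\cos\theta,\sin\theta)$ in the slice, while you use the symmetric combinations $X_{11}+X_{22}$ and $X_{12}-X_{21}$ and check a single midpoint.
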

\begin{proof}
    We define 
    \[
        \pi(X) = \left(X_{11}, X_{12}, \sum_{i=3}^n X_{ii}\right).
    \]
    Let $H = \{x \in \R^3 : x_{3} = n-2\}$.

    To see that $S = \pi(\SO(n))$ is nonconvex, we show that $S \cap H$ is nonconvex.
    Consider a general $X \in \SO(n)$. It holds that $\sum_{i=3}^n X_{ii} = n-2$ if and only if $X_{ii} = 1$ for all $i > 2$.
    This occurs if and only if $X$ is block diagonal, so that
    \[
        X = \begin{pmatrix}
            \begin{smallmatrix}
            \cos(\theta) & \sin(\theta)\\
            -\sin(\theta) & \cos(\theta)
            \end{smallmatrix} & \\
            & I
        \end{pmatrix},
    \]
    for some $\theta \in [0,2\pi]$.

    Now, if $X$ has this form, then $\pi(X) = (\cos(\theta), \sin(\theta), n-2)$, i.e.,
    \[
        S \cap H = \{(\sin(\theta), \cos(\theta), n-2) : \theta \in [0,2\pi]\}.
    \]
    This is clearly nonconvex, for example, $(0,0,n-2) \in \conv (S \cap H) \setminus (S \cap H)$.\ije{\qedhere}
\end{proof}

In fact, this construction gives us an example of a 2-constraint optimization problem over $\SO(n)$ for which the $\conv(\SO(n))$ relaxation is not tight.
Consider the following optimization problem:
\begin{align}
    \label{eq:two_constraint_noncvx}
        \max_{X\in \SO(n)} \left\{\sum_{i=3}^n X_{ii} : \begin{array}{l} X_{1,1} = 0\\  X_{1,2} = 0\end{array}\right\}.
\end{align}
We have seen that it is not possible for a matrix in $\SO(n)$ to attain a value of $n-2$ in this problem, since any matrix in $\SO(n)$ where $\sum_{i=3}^n X_{ii} = n-2$ has the property that $X_{11}^2+X_{12}^2 = 1$.
However, $n-2$ is attainable by a convex combination of matrices in $\SO(n)$,
\[
    \frac{1}{2}\left(
        \begin{pmatrix}
        \begin{smallmatrix}
        1 & 0\\
        0 & 1
        \end{smallmatrix} & \\
        & I
        \end{pmatrix}
    +
        \begin{pmatrix}
        \begin{smallmatrix}
        -1 & 0\\
        0 & -1
        \end{smallmatrix} & \\
        & I
        \end{pmatrix}
    \right).
\]
Thus, the convex relaxation of \eqref{eq:two_constraint_noncvx} that replaces $\SO(n)$ with $\conv(\SO(n))$ achieves value $n - 2$.

\subsection{Maximality of \cite[Theorem 8]{horn1954doubly}}
Horn's result \cite[Theorem 8]{horn1954doubly} shows that the diagonal projection of $\SO(n)$ is a convex polytope.
A slight modification of the construction from the previous subsection shows this is maximally convex in the following sense:
\begin{lemma}
    \label{thm:diagonal_maximal}
    Let $A \in \R^{n\times n}$ be a nondiagonal matrix.
    Consider the linear map $\pi_A : \R^{n\times n} \rightarrow \R^{n+1}$, so that $\pi_A(X)_{i} = X_{ii}$ for $i = 1, \dots, n$, and $\pi_A(X)_{n+1} = \ip{A,X}$.
    Then $\pi_A(\SO(n))$ is not convex.
\end{lemma}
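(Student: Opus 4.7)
The plan is to exhibit, for each nondiagonal $A$, a specific fiber of $\pi_A$ whose image is visibly nonconvex, splitting into two cases based on whether $A$ is symmetric.

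If $A$ is not symmetric, I would pick $j \neq k$ with $c_2 \coloneqq A_{jk} - A_{kj} \neq 0$ and consider the slice $\{X \in \SO(n) : X_{ii} = 1 \text{ for } i \notin \{j,k\}\}$. By orthonormality, this slice consists exactly of block-diagonal matrices with an identity block on $[n] \setminus \{j,k\}$ and a $2\times 2$ rotation $R(\theta)$ on $\{j,k\}$. Writing $c_0 \coloneqq \sum_{i \notin \{j,k\}} A_{ii}$ and $c_1 \coloneqq A_{jj} + A_{kk}$, the image under $\pi_A$ traces the curve $\theta \mapsto (1,\dots,\cos\theta,\dots,\cos\theta,\dots,1,\,c_0 + c_1\cos\theta + c_2\sin\theta)$, which is an ellipse in $\R^{n+1}$ because $c_2 \neq 0$. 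The midpoint of the $\theta=0$ and $\theta=\pi$ endpoints is the point $p \coloneqq (1,\dots,0,\dots,0,\dots,1,\,c_0)$, which is manifestly in $\conv(\pi_A(\SO(n)))$. But any $X \in \SO(n)$ whose diagonal matches the first $n$ coordinates of $p$ must have $\theta = \pm\pi/2$, giving $\langle A,X\rangle = c_0 \pm c_2 \neq c_0$, so $p \notin \pi_A(\SO(n))$.

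If $A$ is symmetric, then since $A$ is nondiagonal I would pick $j < k$ with $A_{jk} = A_{kj} = a \neq 0$ and any $\ell \in [n] \setminus \{j,k\}$ (which exists since $n \geq 3$), and consider the slice where $X_{ii} = 0$ for $i \in \{j,k,\ell\}$ and $X_{ii} = 1$ otherwise. Orthonormality again forces $X$ to be block-diagonal with identity outside $\{j,k,\ell\}$ and a trace-zero $Y \in \SO(3)$ inside. The trace condition forces the rotation angle to be $\pm 2\pi/3$, and together with vanishing diagonal, Rodrigues' formula then forces the rotation axis to be $(u_1,u_2,u_3)/\sqrt{3}$ for some $u \in \{\pm 1\}^3$. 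A short Rodrigues computation shows $Y_{ij}+Y_{ji} = u_i u_j$ for $i \neq j$, so by symmetry of $A$ the slice attains exactly the values $c_0 + a u_1 u_2 + A_{j\ell} u_1 u_3 + A_{k\ell} u_2 u_3$ as $u$ varies over $\{\pm 1\}^3$ modulo $u \sim -u$, a set of at most four values averaging to $c_0$. These four values cannot all coincide: if they did, each would equal $c_0$, and comparing the values at $u = (1,1,1)$ and $u = (1,1,-1)$ would then force $a = 0$, a contradiction. Hence the slice has at least two distinct $(n+1)$-th coordinates, and any $t$ in the open interval between them that is not attained yields a point in $\conv(\pi_A(\SO(n))) \setminus \pi_A(\SO(n))$.

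The main obstacle is the symmetric case, since every $2\times 2$-style slice of the type used in the nonsymmetric case collapses: the antisymmetric contribution $(A_{jk}-A_{kj})\sin\theta$ vanishes and the slice image becomes a convex line segment. Using a $3\times 3$ slice is essential, and its analysis hinges on the Rodrigues parametrization of trace-zero rotations in $\SO(3)$ with vanishing diagonal. Once the four candidate values are written down, the verification that at least two differ reduces to a short algebraic check using that their average equals $c_0$.
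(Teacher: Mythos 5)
Your proof is correct. In the nonsymmetric case you use essentially the same $2\times 2$ slice as the paper (the paper extracts a circle in $\R^2$ via an explicit affine map, you use a midpoint argument; same content). In the symmetric case, however, your route is genuinely different. The paper reduces to the nonsymmetric case via the sign-conjugation $A' = D^{(1)}AD^{(2)}$, claiming $A'$ is asymmetric; you instead work directly inside a $3\times 3$ diagonal slice, parameterize the trace-zero rotations by Rodrigues' formula, and show the fiber's values of $\ip{A,X}$ form a nonsingleton finite set, hence cannot fill the convex hull.

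Your direct approach is more robust. The paper's reduction has a subtle gap as written: computing $A'_{ij} = D^{(1)}_{ii} A_{ij} D^{(2)}_{jj}$ shows $A'_{12} = A_{12}$ and $A'_{21} = A_{21}$, and the only entries whose symmetry is destroyed are the pairs $(1,j)$ and $(2,j)$ for $j\geq 3$. So if the permuted $A$ has $A_{1j}=A_{2j}=0$ for all $j\geq 3$ (e.g.\ $A = E_{12} + E_{21}$, whose only nonzero off-diagonal pair is at $(1,2)$), then $A'$ remains symmetric and the reduction stalls. (The paper's reduction would be repaired by conjugating with $D^{(1)}$ and $D^{(j)}$ for some $j\geq 3$, but that is not what is written.) Your $3\times 3$ slice argument sidesteps this entirely; the cost is a short Rodrigues computation to pin down the trace-zero, zero-diagonal elements of $\SO(3)$ and the observation that the four achievable values of $\ip{A,X}$ average to $c_0$ and cannot all equal it because $a\neq 0$. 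Both that computation and the final ``average-plus-pair-comparison'' step in your write-up check out.

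One small caveat worth stating explicitly in a polished version: the argument (in both the paper and your proposal) implicitly assumes $n\geq 3$, which is needed for the $\ell$ in your symmetric case to exist, and is in fact necessary — for $n=2$ and symmetric nondiagonal $A$, $\pi_A(\SO(2))$ degenerates to a line segment and the lemma is false.
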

\begin{proof}
    We first consider the case when $A$ is not symmetric.
    By permuting coordinates, we may assume $A_{1,2} \neq A_{2,1}$.
    Define $H = \{x \in \R^{n+1} :\, x_{i} = 1 \text{ for } i = 3, \dots, n\}$ and consider $\pi(\SO(n)) \cap H$.

    We have seen that if $X \in \SO(n)$ and $X_{ii} = 1$ for $i = 3, \dots, n$, then 
    \[
        X = \begin{pmatrix}
            \begin{smallmatrix}
            \cos(\theta) & \sin(\theta)\\
            -\sin(\theta) & \cos(\theta)
            \end{smallmatrix} & \\
            & I
        \end{pmatrix},
    \]
    and therefore,
    \[
        \ip{A, X} = (A_{11} + A_{22})\cos(\theta) + (A_{1, 2} - A_{2,1})\sin(\theta) + \sum_{i=3}^n A_{ii}.
    \]
    We will consider the linear map of $\pi(\SO(n))\cap H$ into $\R^2$ that sends $\pi(X)$ to
    \[
        \left(X_{1,1}, \frac{\ip{A,X} - (A_{11} + A_{22})X_{1,1} - \sum_{i=3}^n A_{ii}}{A_{1,2}-A_{2,1}}\right) = (\cos(\theta), \sin(\theta)).
    \]
    In other words, this linear map sends $\pi(\SO(n)) \cap H$ to a circle. We conclude that $\pi(\SO(n))$ is not convex.

    Now, we consider the case when $A$ is symmetric but not diagonal.
    We may permute coordinates to assume that $A_{1,2} = A_{2,1}\neq 0$.
    Let $D^{(i)}$ be a diagonal matrix where $D^{(i)}_{ii} = -1$ and for all $j \neq i$, $D^{(i)}_{jj} = 1$.
    Then, define $A' = D^{(1)} A D^{(2)}$, which is not symmetric. Note that
    \[
        \pi_{A'}(X) = (X_{11}, X_{22}, X_{33},\dots, X_{nn}, \langle A', X\rangle) = \tau(\pi_A(D^{(1)}XD^{(2)})),
    \]
    where $\tau(x) = (-x_1, -x_2,x_3 ,\dots, x_n, x_{n+1})$. In particular, $\pi_A(\SO(n))$ is convex if and only if $\pi_{A'}(\SO(n))$ is convex, from which the claim follows.\ije{\qedhere}
\end{proof}

\subsection{Maximality of \cref{thm:feasibility}}
\cref{thm:feasibility} gives an example of an $m = \binom{n}{2}$-dimensional linear projection of $\SO(n)$ that is convex. 
The following lemma shows this is maximal in terms of dimension.

\begin{lemma}
    Suppose $n\geq 3$ and $\pi : \R^{n\times n} \rightarrow \R^m$ satisfies $\rank(\pi) > \binom{n}{2}$.
    Then, $\pi(\SO(n))$ is non-convex.
\end{lemma}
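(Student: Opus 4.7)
My plan is a dimension-counting argument. Suppose for contradiction that $\pi(\SO(n))$ is convex. Since $\pi$ is linear (hence affine), $\aff(\pi(\SO(n))) = \pi(\aff(\SO(n)))$, so the key is to pin down the dimension of $\aff(\SO(n))$.

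The crucial intermediate step is to establish $\aff(\SO(n)) = \R^{n\times n}$ for $n \geq 3$. I would prove this via a Haar-measure trick: if some nonzero linear functional $\ell$ were constant on $\SO(n)$ with value $c$, then integrating against the Haar measure $\mu$ of $\SO(n)$ gives $c = \ell\bigl(\int_{\SO(n)} X\, d\mu(X)\bigr) = \ell(0) = 0$, since the Haar mean is left-$\SO(n)$-invariant and hence zero by irreducibility of the standard representation for $n\geq 2$. So $\ell$ vanishes on $\SO(n)$, and it suffices to show $\spann(\SO(n)) = \R^{n\times n}$ for $n \geq 3$. This can be verified by exhibiting enough rotations: coordinate-plane rotations $R_{ij}(\theta)$ already yield all diagonal matrices and all skew-symmetric off-diagonal matrices in the span, while $\pi$-rotations around axes of the form $(e_i+e_j)/\sqrt{2}$, which equal $(E_{ij}+E_{ji}) - E_{kk}$ for a suitable $k$, contribute the missing symmetric off-diagonal matrices.

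Given this, linearity of $\pi$ yields
\begin{align*}
    \aff(\pi(\SO(n))) = \pi(\aff(\SO(n))) = \pi(\R^{n\times n}) = \Image(\pi),
\end{align*}
so $\dim \aff(\pi(\SO(n))) = \rank(\pi) > \binom{n}{2}$. But a nonempty convex set has non-empty relative interior in its affine hull, so its Hausdorff dimension equals that of its affine hull. Thus if $\pi(\SO(n))$ were convex, its Hausdorff dimension would exceed $\binom{n}{2}$. On the other hand, $\pi(\SO(n))$ is the image of the compact $\binom{n}{2}$-dimensional manifold $\SO(n)$ under the Lipschitz map $\pi$, so its Hausdorff dimension is at most $\binom{n}{2}$, a contradiction.

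The main obstacle is the intermediate claim $\aff(\SO(n))=\R^{n\times n}$ for $n\geq 3$. The Haar trick cleanly reduces this to $\spann(\SO(n))=\R^{n\times n}$, but that span computation, while elementary, requires a little care to produce symmetric off-diagonal elements; alternatively one could invoke full-dimensionality of $\conv(\SO(n))$, which follows from the LMI description of \cref{thm:convSOn} (the identity $\sum_{ij} 0\cdot A_{ij} = 0 \prec I$ exhibits $0$ as an interior point).
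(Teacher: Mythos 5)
Your proof is correct and takes essentially the same dimension-counting route as the paper: both use full-dimensionality of $\conv(\SO(n))$ to argue that a convex $\pi(\SO(n))$ would need dimension exceeding $\binom{n}{2}$, which is then contradicted by the fact that the image of the $\binom{n}{2}$-dimensional manifold $\SO(n)$ under a smooth (Lipschitz) map cannot have larger dimension. The only differences are cosmetic: the paper reduces to $\rank(\pi)=m$ and invokes Sard's theorem to get Lebesgue measure zero, whereas you use Hausdorff dimension under Lipschitz maps directly, and you supply a proof of the full-dimensionality of $\conv(\SO(n))$ (via the Haar/span argument or the LMI shortcut $0\prec I$) that the paper takes for granted.
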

\begin{proof}
    It suffices to show this statement in the case where $\rank(\pi)=m$.
    Suppose that $\pi(\SO(n)) = \pi(\conv(\SO(n)))$ is convex. 
    A convex set has the property that it either contains an interior point or is contained in a proper affine subspace.
    As $\conv(\SO(n))$ is full-dimensional for all $n\geq 3$ and $\pi$ has full rank, we deduce that $\pi(\conv(\SO(n)))$ cannot be contained in a proper affine subspace of $\R^m$. 

Therefore, $\pi(\SO(n))$ must have an interior point and in particular, has a positive measure. 

    This is a contradiction to Sard's lemma: $\SO(n)$ is $\binom{n}{2}$-dimensional and $\R^m$ is $m$-dimensional, but Sard's lemma states that the image of a manifold of dimension less than $m$ under a smooth map into $\R^m$ must have measure zero.\ije{\qedhere}
\end{proof}

\subsection{Necessity of $\det(A)\geq 0$ in \cref{thm:opt_so_with_sut}}
We have shown that if $A$ is a diagonal matrix with $\det(A)\geq 0$, then the optimization problem
\begin{equation*}
        \max_{X \in \SO(n)} \{\ip{A,X} :  \pi_\sut(X) = \sigma\}
\end{equation*}
agrees with the convex relaxation replacing $\SO(n)$ with $\Bop(n)$ (see \cref{thm:opt_so_with_sut}).
The following numerical example shows that the assumption that $\det(A)\geq 0$ cannot be dropped in \cref{thm:opt_so_with_sut} even if we strengthen the convex relaxation by replacing $\SO(n)$ with $\conv(\SO(n))$.

The following numerical example is computed using the 
cvxpy convex optimization package \cite{diamond2016cvxpy}
and the description of $\conv(\SO(3))$ given in \cite{saunderson2015semidefinite}[Theorem 1.3]:
\begin{equation*}
    \max_{X\in\SO(3)} \left\{X_{1,1}+X_{2,2}-X_{3,3} :
    \begin{array}{l}
        X_{1,2} = 0.5\\
        X_{1,3} = 0.3\\
        X_{2,3} = 0.2
    \end{array}
        \right\} = 0.921,
\end{equation*}
whereas
\begin{equation*}
    \max_{X\in\conv(\SO(3))} \left\{X_{1,1}+X_{2,2}-X_{3,3} :
    \begin{array}{l}
        X_{1,2} = 0.5\\
        X_{1,3} = 0.3\\
        X_{2,3} = 0.2
    \end{array}
        \right\} = 1.0.
\end{equation*}

\section{Summary and open questions}
In this paper, we proved new hidden convexity results inspired by solving constrained optimization problems over $\SO(n)$ and $\O(n)$. These results in turn show that specific structured instances of constrained optimization over $\SO(n)$ and $\O(n)$ can be efficiently solved via their convex relaxations. We close by posing some natural questions surrounding hidden convexity.

\paragraph{Convex coordinate projections.}
In general, it seems to be difficult to fully characterize the possible sets of coordinates $S \subseteq [n] \times [n]$ so that the projection of $\SO(n)$ onto the coordinates in $S$ is convex.

We will note some basic invariants of this question: clearly if $\pi_S(\SO(n))$ is convex, then for all $T \subseteq S$, \change{$\pi_T(\SO(n))$} is convex.
We say that $S$ has a property \emph{up to permutation} if there are permutations $\sigma$ and $\rho$ so that $\{(\sigma_i, \rho_j) : (i, j) \in S\}$ has this property.
Similarly we say that $S$ has a property \emph{up to transposition} if either $S$ or $\{(j,i) : (i, j) \in S\}$ has this property.
Clearly, $S$ has the property that $\pi_S(\SO(n))$ is convex if and only if it has this property up to permutations and transposition.

Note also that by \cref{thm:fiedler}, that $\pi_S(\SO(n))$ is not convex if $S$ contains a rectangle of size $a\times b$ where $a+b>n$. Here, by rectangle we mean a subset of $S$ of the form $A\times B\subseteq S$ where $A,\, B\subseteq [n]$ and $\abs{A} = a$ and $\abs{B} = b$.

Using this idea with additional casework (which we feel is ultimately uninformative)
we can obtain the following characterization of the coordinate subsets of $[4] \times [4]$ so that $\pi_S(\SO(4))$ is convex:
\begin{lemma}
    Let $S \subseteq [4] \times [4]$ be such that $\pi_S(\SO(4))$ is convex. Then (up to permutations and transpositions), $S$ is a subset of one of the following: 
    \begin{itemize}
        \item $T = \{(i,j) : i < j \in [4]\}$
        \item $D = \{(i,i) : i \in [4]\}$
        \item $F = \{(1,1),(1,2),(2,3),(2,4)\}$.
    \end{itemize}
\end{lemma}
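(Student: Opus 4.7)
The plan is to split the proof into two directions: (i) verify that each of the three listed patterns $T$, $D$, $F$ yields a convex projection (from which it follows that every subset does), and (ii) show that any subset not fitting (up to permutation and transposition) inside one of the three patterns gives a non-convex image.

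\textbf{Forward direction.}
Convexity of $\pi_T(\SO(4))$ is the $n=4$ case of \cref{thm:feasibility}, and convexity of $\pi_D(\SO(4))$ is the $n=4$ case of \cref{thm:diagconv}. The novel case is $F$. Note that $F$ touches only rows $1$ and $2$, and within row $1$ the constrained columns are $\{1,2\}$ while within row $2$ they are $\{3,4\}$; these column sets are disjoint. Given any target $(a,b,c,d)\in\R^4$ with $a^2+b^2\le 1$ and $c^2+d^2\le 1$, I would explicitly construct $X\in\SO(4)$ with these entries by first choosing the two free entries of row $1$ and the two free entries of row $2$ so that the resulting vectors $r_1,r_2\in\R^4$ are orthonormal. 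The orthogonality equation can be rewritten as $(a,b)\cdot(X_{2,1},X_{2,2})=-(c,d)\cdot(X_{1,3},X_{1,4})$; as the two sides, ranging over spheres of radius $\sqrt{1-c^2-d^2}$ and $\sqrt{1-a^2-b^2}$, each sweep an interval containing $0$, a common value always exists. One then completes rows $3,4$ as any positively oriented orthonormal basis of the $2$-dimensional orthogonal complement. Hence $\pi_F(\SO(4))=\bar B(0,1)\times\bar B(0,1)\subseteq\R^2\times\R^2$, which is convex.

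\textbf{Reverse direction.}
I would prove the contrapositive: if $S\subseteq[4]\times[4]$ is not (up to permutation and transposition) a subset of $T$, $D$, or $F$, then $\pi_S(\SO(4))$ is non-convex. The first reduction invokes \cref{thm:fiedler} to eliminate any $S$ containing an $a\times b$ sub-rectangle with $a+b\ge 5$; in particular $S$ cannot contain any full row, full column, or $2\times 3$ sub-rectangle. For the remaining patterns, the engine of non-convexity is the construction used in \cref{thm:nonconvex}: intersecting $\pi_S(\SO(4))$ with the affine subspace obtained by pinning selected diagonal entries to $1$ forces the relevant rows and columns of $X$ to become block-diagonal, after which off-diagonal pairs trace out a circle. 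For each equivalence class of $S$ surviving Fiedler's restriction, I would exhibit a specific slice of this form that witnesses non-convexity, or reduce to an already-treated case by permutation/transposition.

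\textbf{Main obstacle.}
The difficulty lies entirely in the second direction: one must enumerate equivalence classes of subsets of $[4]\times[4]$ under the action of $S_4\times S_4$ composed with transposition, and for each class either show containment in $T$, $D$, or $F$, or exhibit an obstruction. I would organize the enumeration by the ``row-column incidence shape'' of $S$ (the multiset of row sizes and column sizes), process shapes in order of the maximum of these sizes, and for each shape identify a minimal obstruction. This is the portion the authors describe as ``ultimately uninformative,'' and it is the main labor of the proof; everything else is structural.
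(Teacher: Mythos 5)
Your outline matches what the paper itself offers: the paper does not present a full proof of this lemma, but rather observes (just before the statement) that $\pi_S(\SO(4))$ cannot be convex if $S$ contains an $a\times b$ rectangle with $a+b>4$ by \cref{thm:fiedler}, and then defers the rest to ``additional casework (which we feel is ultimately uninformative).'' Your proposal follows the same skeleton---Fiedler's rectangle obstruction, followed by an enumeration of equivalence classes under $S_4\times S_4$ and transposition---and, like the paper, it does not carry out the enumeration, so the comparison can only be made at the level of outline, where the two agree.

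Two remarks worth keeping in mind. First, the lemma is a one-way implication (convexity implies $S\subseteq T$, $D$, or $F$ up to symmetry), so your forward verification that $T$, $D$, $F$ give convex images is not logically required, though the direct construction you give for $\pi_F(\SO(4))=\bar B^2\times\bar B^2$ is correct. Second, the obstruction engine you propose for the reverse direction---pinning diagonal entries to $1$ and extracting a circle as in \cref{thm:nonconvex}---is not by itself sufficient for all surviving classes, since some candidate patterns $S$ contain no diagonal entries; you will also need slices obtained by pinning off-diagonal entries to $\pm 1$ (which similarly kills entire rows and columns), and you will need to be careful about which patterns are already accounted for by the three exemplars. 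For instance, up to permutations $D$ captures every partial matching, so a pattern like $\{(1,2),(2,1),(3,4),(4,3)\}$ is in fact equivalent to a subset of $D$ and its image is $\PP_4$; deciding membership in the three cases is itself a nontrivial part of the casework. So the approach is sound, but the toolbox for the enumeration needs to be a bit richer than the single circle construction you cite.
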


The structure of these examples suggest that there may be some rich combinatorial information hidden in the question of whether or not a given coordinate projection of $\SO(n)$ is convex.
In particular, we suspect the following: consider the decision problem \texttt{CONVEX} whose input is a set $S \subseteq [n] \times [n]$, and whose output is TRUE if $\pi_S(\SO(n))$  is convex, and FALSE otherwise.
\begin{conjecture}
    The problem \texttt{CONVEX} is NP-hard.
\end{conjecture}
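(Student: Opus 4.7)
The plan is to establish NP-hardness via a reduction from a well-studied NP-hard problem on bipartite graphs, most naturally the \emph{balanced complete bipartite subgraph} problem (BCBS): given a bipartite graph $G$ with parts $[p],[q]$ and an integer $k$, decide whether $G$ contains $K_{k,k}$. The guiding observation, already present in the excerpt, is that a coordinate set $S$ is automatically non-convex whenever it contains a rectangle $A\times B$ with $|A|+|B|>n$. So the starting idea is to use these Fiedler-style rectangles as the only ``controllable'' obstruction in a gadget construction.

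Concretely, given a BCBS instance $(G,k)$ with $G\subseteq [p]\times[q]$, I would construct an ambient dimension $N=N(p,q,k)$ and a coordinate set $S_G\subseteq [N]\times[N]$ of the form
\[
S_G = \iota(G) \;\cup\; F_{\mathrm{pad}},
\]
where $\iota(G)$ embeds $G$ into the top-left $p\times q$ block and $F_{\mathrm{pad}}$ is a fixed padding gadget chosen so that: (i) the only $A\times B\subseteq S_G$ with $|A|+|B|>N$ are the ones of the form $\iota(A_0\times B_0)$ for $A_0\times B_0\subseteq G$ with $|A_0|+|B_0|>N-\mathrm{pad}$; (ii) after tuning the padding so that $N-\mathrm{pad}=2k-1$, this is equivalent to $G\supseteq K_{k,k}$. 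Then in the YES-case, $S_G$ contains a forbidden Fiedler rectangle and $\pi_{S_G}(\SO(N))$ is non-convex, whereas in the NO-case $S_G$ has no such rectangle. The reduction is clearly polynomial.

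The difficult step, which I would attack next, is to ensure that in the NO-case the projection is actually convex. The excerpt warns that Fiedler-style rectangles are not the only source of non-convexity: for $n=4$ there is already the unexpected convex configuration $F$, and its natural complement yields subtler obstructions. So the main obstacle is either (a) producing a complete list of forbidden patterns on the restricted family $\{S_G\}$, or (b) designing the padding gadget $F_{\mathrm{pad}}$ so rigidly that, outside the designated rectangle, $S_G$ is a subset of (a permutation/transposition of) some known convex pattern such as the strict upper triangular set $T$. The latter route looks more tractable: one can try to make $F_{\mathrm{pad}}$ a strict upper triangular pattern of controlled shape, and use the hidden convexity theorem $\pi_\sut(\SO(n))$ is convex (\cref{thm:feasibility}) to certify convexity of ``most'' of the projection, so that any failure of convexity must come from the embedded $G$-rectangle.

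A secondary technical step is closure under permutations and transpositions, which is the invariance of \texttt{CONVEX} under row/column relabelling of $S$. The reduction must ensure that no permutation of $S_G$ creates a forbidden rectangle that does not correspond to a $K_{k,k}$ inside $G$; this can likely be enforced by making $F_{\mathrm{pad}}$ combinatorially asymmetric (for instance, by using distinct row/column degree sequences in the padding). If these steps go through, one obtains a polynomial-time reduction BCBS $\le_P$ \texttt{CONVEX}, proving NP-hardness. I expect that completing step (b)—certifying convexity in NO-instances without a complete characterization theorem—will be by far the hardest part, and may require first extending the partial classification in the excerpt (e.g., the $F$-pattern for $n=4$) to an infinite family of convex ``scaffolds'' that can serve as padding.
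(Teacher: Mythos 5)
The statement you are addressing is a \emph{conjecture} in the paper; the authors do not prove it and offer no reduction. The only supporting remark they make is that the purely combinatorial problem of deciding whether $S\subseteq[n]\times[n]$ is, up to row/column permutations and transposition, a subset of the strict upper triangle is itself NP-hard \cite{fertin2015obtaining}. So there is no proof in the paper to compare your proposal against, and your proposal should be assessed on its own terms as an attempted reduction.

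On those terms, your plan has a genuine and (as you correctly sense) fatal gap at step (b): certifying convexity of $\pi_{S_G}(\SO(N))$ in NO-instances. The Fiedler obstruction---an $a\times b$ all-present rectangle with $a+b>N$, via \cref{thm:fiedler}---is only a \emph{necessary} condition for convexity, not a sufficient one, and the paper's own partial classification for $n=4$ already exhibits an exotic maximal convex pattern $F=\{(1,1),(1,2),(2,3),(2,4)\}$ that is not a permuted sub-SUT, showing that no clean list of forbidden patterns is available. Worse, even if you design $F_{\mathrm{pad}}$ to be a permuted strict upper triangular pattern so that $\pi_{F_{\mathrm{pad}}}(\SO(N))$ is convex by \cref{thm:feasibility}, the union $S_G=\iota(G)\cup F_{\mathrm{pad}}$ has no reason to remain convex: unions of coordinate sets each with convex projection can easily fail to have convex projection (already at $m=3$, see \cref{thm:nonconvex}, where the bad pattern $\{(1,1),(1,2),(3,3),\dots,(n,n)\}$ contains no large Fiedler rectangle but is nonconvex). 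So ``absence of $K_{k,k}$ in $G$'' does not translate into ``convex projection,'' and the NO-direction of the reduction does not go through. A reduction along the route the paper hints at (from \cite{fertin2015obtaining}) would hit the identical wall: one would need to know that convexity of $\pi_S(\SO(n))$ is equivalent, or at least tightly coupled, to $S$ being a permuted sub-SUT, and the $F$-pattern shows that this equivalence fails already at $n=4$. Until one has a sufficient condition for convexity (or a complete characterization) that is provable and polynomially checkable on the gadget family, neither your BCBS reduction nor the triangular-recognition reduction can be completed, which is presumably why the paper leaves the statement as a conjecture.
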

We will remark that it is not even clear if this problem is in NP, as there does not seem to be an efficient witness to the fact that $\pi_S(\SO(n))$ is convex. We note that determining whether $S \subseteq [n] \times [n]$ is (up to permutations and transpositions) a subset of the upper triangular entries of $[n] \times [n]$ is NP-hard \cite{fertin2015obtaining}.

\paragraph{Small semidefinite representation of two-dimensional images.}
It is known that the smallest equivariant (see \cite{saunderson2015semidefinite} for a definition) semidefinite representation of $\conv(\SO(n))$ is exponential in size~\cite{saunderson2015semidefinite}. We leave open the question of whether $\pi(\SO(n))$, where $\pi:\R^{n\by n}\to\R^2$, may have a small (possibly linear-sized) semidefinite representation.

\paragraph{Hidden convexity of multiple copies of $\SO(n)$.}
Finally, we also leave the study of convex images of direct products of $\SO(n)$ to future work. Such results may be useful in applications such as cryo-EM~\cite{bandeira2020non}, where the optimization problems contain multiple $\SO(n)$ matrices.

\section*{Acknowledgments}
Akshay Ramachandran was supported by the European Research Council (ERC)
under the European Union’s Horizon 2020 research and innovation programme:
AR from grant agreement no.\ 805241-QIP.
Kevin Shu was supported by the ACO-ARC fellowship while conducting this work.
Part of this work was completed while Alex L.\ Wang was supported by the Dutch Scientific Council (NWO) grant OCENW.GROOT.2019.015 (OPTIMAL).
 
\newpage
{
        \bibliographystyle{plainnat}

\begin{thebibliography}{30}
            \providecommand{\natexlab}[1]{#1}
            \providecommand{\url}[1]{\texttt{#1}}
            \expandafter\ifx\csname urlstyle\endcsname\relax
              \providecommand{\doi}[1]{doi: #1}\else
              \providecommand{\doi}{doi: \begingroup \urlstyle{rm}\Url}\fi
            
            \bibitem[Bandeira et~al.(2020)Bandeira, Chen, Lederman, and
              Singer]{bandeira2020non}
            A.~S. Bandeira, Y.~Chen, R.~R. Lederman, and A.~Singer.
            \newblock Non-unique games over compact groups and orientation estimation in
              {cryo-EM}.
            \newblock \emph{Inverse Problems}, 36\penalty0 (6):\penalty0 064002, 2020.
            
            \bibitem[Barvinok(2002)]{barvinok2002course}
            A.~Barvinok.
            \newblock \emph{A course in convexity}, volume~54.
            \newblock Amer. Math. Soc, 2002.
            
            \bibitem[Blekherman et~al.(2016)Blekherman, Smith, and
              Velasco]{blekherman2016sums}
            G.~Blekherman, G.~Smith, and M.~Velasco.
            \newblock Sums of squares and varieties of minimal degree.
            \newblock \emph{{J.\ Amer.\ Math.\ Soc.}}, 29\penalty0 (3):\penalty0 893--913,
              2016.
            
            \bibitem[Brickman(1961)]{brickman1961field}
            L.~Brickman.
            \newblock On the field of values of a matrix.
            \newblock \emph{{Proc.\ Amer.\ Math.\ Soc.}}, 12\penalty0 (1):\penalty0 61--66,
              1961.
            
            \bibitem[Brynte et~al.(2022)Brynte, Larsson, Iglesias, and
              Kahl]{brynte2022tightness}
            L.~Brynte, V.~Larsson, J.~P. Iglesias, and C.~Olssonand~F. Kahl.
            \newblock On the tightness of semidefinite relaxations for rotation estimation.
            \newblock \emph{J.\ Math.\ Imaging Vision}, pages 1--11, 2022.
            
            \bibitem[Chan and Li(1983)]{chan1983diagonal}
            N.~Chan and K.~Li.
            \newblock Diagonal elements and eigenvalues of a real symmetric matrix.
            \newblock \emph{J.\ Math.\ Anal.\ Appl.}, 91\penalty0 (2):\penalty0 562--566,
              1983.
            
            \bibitem[Chen et~al.(2020)Chen, Huang, and Fitch]{chen2020active}
            Y.~Chen, S.~Huang, and R.~Fitch.
            \newblock Active {SLAM} for mobile robots with area coverage and obstacle
              avoidance.
            \newblock \emph{IEEE/ASME Trans.\ Mechatronics}, 25\penalty0 (3):\penalty0
              1182--1192, 2020.
            
            \bibitem[Diamond and Boyd(2016)]{diamond2016cvxpy}
            S.~Diamond and S.~Boyd.
            \newblock {CVXPY}: A {P}ython-embedded modeling language for convex
              optimization.
            \newblock \emph{J.\ Mach.\ Learn.\ Res}, 17\penalty0 (1):\penalty0 2909--2913,
              2016.
            
            \bibitem[Dines(1941)]{dines1941mapping}
            L.~L. Dines.
            \newblock On the mapping of quadratic forms.
            \newblock \emph{Bull.\ Amer.\ Math.\ Soc.}, 47\penalty0 (6):\penalty0 494--498,
              1941.
            
            \bibitem[Fertin et~al.(2015)Fertin, Rusu, and Vialette]{fertin2015obtaining}
            G.~Fertin, I.~Rusu, and S.~Vialette.
            \newblock Obtaining a triangular matrix by independent row-column permutations.
            \newblock In \emph{Algorithms and Computation: 26th International Symposium,
              ISAAC 2015}, pages 165--175, 2015.
            
            \bibitem[Fiedler(2009)]{fiedler2009suborthogonality}
            M.~Fiedler.
            \newblock Suborthogonality and orthocentricity of matrices.
            \newblock \emph{{Linear Algebra Appl.}}, 430\penalty0 (1):\penalty0 296--307,
              2009.
            
            \bibitem[Fradkov and Yakubovich(1979)]{fradkov1979thes}
            A.~Fradkov and V.~Yakubovich.
            \newblock The {S}-procedure and duality relations in nonconvex problems of
              quadratic programming.
            \newblock \emph{Vestn. LGU, Ser. Mat., Mekh., Astron}, 1:\penalty0 101--109,
              1979.
            
            \bibitem[Gilman et~al.(2022)Gilman, Burer, and Balzano]{gilman2022semidefinite}
            K.~Gilman, S.~Burer, and L.~Balzano.
            \newblock A semidefinite relaxation for sums of heterogeneous quadratics on the
              stiefel manifold.
            \newblock \emph{arXiv preprint arXiv:2205.13653}, 2022.
            
            \bibitem[Gr{\"o}tschel et~al.(1981)Gr{\"o}tschel, Lov{\'a}sz, and
              Schrijver]{grotschel1981ellipsoid}
            M.~Gr{\"o}tschel, L.~Lov{\'a}sz, and A.~Schrijver.
            \newblock The ellipsoid method and its consequences in combinatorial
              optimization.
            \newblock \emph{Combinatorica}, 1:\penalty0 169--197, 1981.
            
            \bibitem[Gr{\"o}tschel et~al.(2012)Gr{\"o}tschel, Lov{\'a}sz, and
              Schrijver]{grotschel2012geometric}
            M.~Gr{\"o}tschel, L.~Lov{\'a}sz, and A.~Schrijver.
            \newblock \emph{Geometric algorithms and combinatorial optimization}, volume~2.
            \newblock Springer Science \& Business Media, 2012.
            
            \bibitem[Guillemin and Sjamaar(2005)]{GSBook}
            V.~Guillemin and R.~Sjamaar.
            \newblock \emph{Convexity Properties of {Hamiltonian} Group Actions}.
            \newblock AMS, 2005.
            
            \bibitem[Hatcher(2002)]{MR1867354}
            A.~Hatcher.
            \newblock \emph{Algebraic topology}.
            \newblock Cambridge University Press, Cambridge, 2002.
            
            \bibitem[Horn(1954)]{horn1954doubly}
            A.~Horn.
            \newblock Doubly stochastic matrices and the diagonal of a rotation matrix.
            \newblock \emph{Amer.\ J.\ Math.}, 76\penalty0 (3):\penalty0 620--630, 1954.
            
            \bibitem[Horn and Johnson(2012)]{horn2012matrix}
            R.~A. Horn and C.~R. Johnson.
            \newblock \emph{Matrix analysis}.
            \newblock Cambridge university press, 2012.
            
            \bibitem[Jeroslow(1975)]{jeroslow1975defining}
            R.~G. Jeroslow.
            \newblock On defining sets of vertices of the hypercube by linear inequalities.
            \newblock \emph{Discrete Math.}, 11\penalty0 (2):\penalty0 119--124, 1975.
            
            \bibitem[Kiefer(1953)]{kiefer1953sequential}
            J.~Kiefer.
            \newblock Sequential minimax search for a maximum.
            \newblock \emph{{Proc.\ Amer.\ Math.\ Soc.}}, 4\penalty0 (3):\penalty0
              502--506, 1953.
            
            \bibitem[Lancia and Serafini(2018)]{Lancia2018}
            G.~Lancia and P.~Serafini.
            \newblock \emph{The Parity Polytope}, pages 113--121.
            \newblock Springer International Publishing, 2018.
            
            \bibitem[Lee and Mesbahi(2011)]{lee2011spacecraft}
            U.~Lee and M.~Mesbahi.
            \newblock Spacecraft reorientation in presence of attitude constraints via
              logarithmic barrier potentials.
            \newblock In \emph{Proceedings of the 2011 American Control Conference}, pages
              450--455. IEEE, 2011.
            
            \bibitem[Maron et~al.(2016)Maron, Dym, Kezurer, Kovalsky, and
              Lipman]{ConvexPointRegistration}
            H.~Maron, N.~Dym, I.~Kezurer, S.~Kovalsky, and Y.~Lipman.
            \newblock Point registration via efficient convex relaxation.
            \newblock \emph{Trans.\ Graphics}, 2016.
            
            \bibitem[Ovsjanikov et~al.(2012)Ovsjanikov, Ben-Chen, Solomon, Butscher, and
              Guibas]{FunctionalMatching}
            M.~Ovsjanikov, M.~Ben-Chen, J.~Solomon, A.~Butscher, and L.~Guibas.
            \newblock Functional maps: a flexible representation of maps between shapes.
            \newblock \emph{Trans. Graphics}, 2012.
            
            \bibitem[P{\'o}lik and Terlaky(2007)]{polik2007survey}
            I.~P{\'o}lik and T.~Terlaky.
            \newblock A survey of the {S}-lemma.
            \newblock \emph{SIAM Rev.}, 49\penalty0 (3):\penalty0 371--418, 2007.
            
            \bibitem[Saunderson et~al.(2015)Saunderson, Parrilo, and
              Willsky]{saunderson2015semidefinite}
            J.~Saunderson, P.~Parrilo, and A.~S. Willsky.
            \newblock Semidefinite descriptions of the convex hull of rotation matrices.
            \newblock \emph{{SIAM J.\ Optim.}}, 25\penalty0 (3):\penalty0 1314--1343, 2015.
            
            \bibitem[Tam(2001)]{TamSurvey}
            T-Y Tam.
            \newblock On the shape of numerical ranges associated with {L}ie groups.
            \newblock \emph{Taiwanese J. Math.}, 5\penalty0 (3):\penalty0 497--506, 2001.
            
            \bibitem[Wahba(1965)]{wahba1965least}
            G.~Wahba.
            \newblock A least squares estimate of satellite attitude.
            \newblock \emph{SIAM Rev.}, 7\penalty0 (3):\penalty0 409--409, 1965.
            
            \bibitem[Xia(2020)]{xia2020survey}
            Y.~Xia.
            \newblock A survey of hidden convex optimization.
            \newblock \emph{J.\ Oper.\ Res.\ Soc.\ China}, 8\penalty0 (1):\penalty0 1--28,
              2020.
            
            \end{thebibliography}

}

\begin{appendix}
\crefalias{section}{appendix}

\section{Separation and optimization oracles for the parity polytope}
\label{app:separation_pp}

    It is possible to implement separation and optimization oracles for $\PP_n$ that run in $\O(n\log n)$ time.

    \paragraph{Separation.}
    We will use the following description of $\PP_n$ given in \cite{jeroslow1975defining,Lancia2018}:
\[ \PP_{n} \coloneqq \set{ x \in [-1,1]^{n} :\, \ip{x, 1_{n} - 2 \cdot 1_{S}} \leq n-2 ,\,\quad \forall \text{ odd  } S \subseteq [n]}.     \]
    Here, we will say that $S\subseteq[n]$ is odd if $\abs{S}$ is odd. Else, $S$ is even.
    The set of constraints can be rewritten as
    \[ \min_{\text{odd }S\subseteq[n]} \ip{x, 1_{S}} \geq \frac{1}{2} (\langle x, 1_{n} \rangle - (n-2) ) .  \]
In $O(n\log n)$ time, we may sort the entries of $x$ and compute the sums of all odd-length prefixes of the sorted vector.
    If every sum is at least $\frac{1}{2} (\langle x, 1_{n} \rangle - (n-2) )$, then $x\in\PP_n$. Otherwise, we have found a separating hyperplane.

    \paragraph{Optimization.}
    We will use the vertex description
    \[ \PP_{n} \coloneqq \conv\set{ 1_{n} - 2\cdot 1_{S} :\,
    \text{even } S\subseteq[n]}.  \]
    Then, given $w\in\R^n$, we may optimize $\max_{x \in \PP_{n}} \ip{w,x}$ by solving $\min_{\text{even} S\subseteq[n]} \ip{w,1_S}$. We can construct a minimizer of the latter problem in $O(n\log n)$ time by sorting $w$ and computing the even-length prefix sums of the sorted vector.  

\section{Connections with quadratic convexity theorems}
\label{sec:quadratic_convexity}

This appendix interprets hidden convexity results on $\SO(n)$ as quadratic convexity results on the unit sphere.

A basic result in the Lie group theory of $\SO(n)$ is the existence of a quadratic map $Q:\R^{2^{n-1}}\to\R^{n\by n}$ and a subset $\spin(n)$ of the unit sphere in $\R^{2^{n-1}}$ such that $Q(\spin(n)) = \SO(n)$.
This map is quadratic in the sense that there exists a collection of $n^2$ symmetric matrices $\set{A_{ij}}\subseteq\S^{2^{n-1}}$ indexed by $(i,j)\in[n]^2$ such that
\begin{align*}
(Q(x))_{i,j} = \ip{x, A_{ij} x}.
\end{align*}

This result and its construction are explained in detail in \cite[Appendix A]{saunderson2015semidefinite}.
It is additionally shown in \cite[Theorem 1.1]{saunderson2015semidefinite} that for any $Y\in\R^{n\by n}$,
\begin{gather}
    \label{eq:saunderson_one_dim}
\max_{X\in\SO(n)}\ip{Y, X} = 
\max_{x\in\spin(n)}\ip{Y,Q(x)}
=
\max_{x\in\bS^{2^{n-1}-1}}\ip{Y, Q(x)}.
\end{gather}

Now, let $\pi:\R^{n\by n}\to\R^m$ be a linear function. Then,
\begin{align*}
\pi(\SO(n)) &= (\pi\circ Q)(\spin(n))
\subseteq (\pi\circ Q)(\bS^{2^{n-1}-1})
\subseteq \conv(\pi(\SO(n))).
\end{align*}
Here, the last inclusion follows by \eqref{eq:saunderson_one_dim}.

We deduce that if $\pi(\SO(n))$ is convex, then equality holds throughout this chain and the image of the unit sphere $\bS^{2^{n-1}-1}$ under the quadratic map $\pi\circ Q$ is convex.
For example, combined with \cref{thm:feasibility}, we have that
\begin{align*}
    \set{(\ip{x, A_{ij}x})_{i<j}:\, x\in\bS^{2^{n-1}-1}}\subseteq\R^{\binom{n}{2}}
\end{align*}
is convex.
As another example, combined with \cite[Theorem 8]{horn1954doubly}, we have that
\begin{align*}
    \set{\begin{pmatrix}
    \ip{x,A_{11}x}\\
    \vdots\\
    \ip{x,A_{nn}x}
    \end{pmatrix}:\, x\in\bS^{2^{n-1}-1}}\subseteq\R^n
\end{align*}
is convex (and equal to the polytope $\PP_n$).

 \end{appendix}

\end{document}